\numberwithin{theorem}{section}
\def\vec#1{{\bf #1}}
\def\D{\mathcal{D}}
\newcommand{\figref}[1]{Fig.~\ref{#1}}
\title{
  Provably Physical-Constraint-Preserving 
  Discontinuous Galerkin Methods    
  for Multidimensional Relativistic MHD Equations
}
\author{
  Kailiang Wu\thanks{Department of Mathematics, The Ohio State University, 
  	Columbus, OH 43210, USA  ({\tt wu.3423@osu.edu}).}
  \and
  Chi-Wang Shu\thanks{Division of Applied Mathematics, Brown University, Providence, RI 02912, USA ({\tt Chi-Wang\_Shu@brown.edu.}). Research is supported in part by NSF grant DMS-1719410. }  
}
\begin{document}
\maketitle

\begin{center}
February 7, 2020
\end{center}

\vspace{3mm}


\begin{abstract}
We propose and analyze a class of robust, uniformly high-order accurate discontinuous Galerkin (DG) schemes for multidimensional relativistic magnetohydrodynamics (RMHD) on general meshes. 
A distinct feature of the schemes is their physical-constraint-preserving (PCP) property, 
i.e., they are proven to  
 preserve the subluminal constraint on the fluid velocity and the positivity of density, pressure,  and specific internal energy. 
Developing PCP high-order schemes for RMHD is highly desirable but remains a challenging task, especially in the multidimensional cases, due to 
the inherent strong nonlinearity in the constraints and the effect of the magnetic divergence-free condition. 
Inspired by some crucial observations at the 
PDE level, we construct the provably PCP 
schemes by using the locally divergence-free DG schemes 
of the recently proposed symmetrizable RMHD equations as the base schemes, 
a limiting technique to enforce the PCP property of the
DG solutions, and the 
strong-stability-preserving methods for 
time discretization. 
We rigorously prove the PCP property 
by using a novel ``quasi-linearization'' approach to handle the
highly nonlinear physical constraints, technical splitting to offset the influence of divergence error,  
and sophisticated estimates to analyze the beneficial effect of the additional source term in the symmetrizable RMHD system. 
Several two-dimensional numerical examples are provided to confirm 
the PCP property and to demonstrate the accuracy, effectiveness and robustness 
of the proposed PCP schemes.

\end{abstract}

\begin{keywords}
relativistic magnetohydrodynamics, discontinuous Galerkin method,  physical-constraint-preserving, high-order accuracy, 
locally divergence-free, hyperbolic conservation laws
\end{keywords}

\begin{AMS}
  65M60, 65M12, 35L65, 76W05
\end{AMS}

\section{Introduction} 
\label{sec:intro}

This paper is concerned with developing robust high-order accurate numerical methods for the special relativistic magnetohydrodynamics (RMHD) equations, which are used to 
describe the dynamics of electrically-conducting fluids at nearly the speed of light 
in the presence of magnetic field. 
RMHD play an important role in many fields, such as astrophysics and high energy physics, 
and have been used to investigate a number of 
astrophysical scenarios from stellar to galactic scales, e.g., gamma-ray bursts,  
formation
of black holes, astrophysical jets, blast waves of supernova
explosions, gravitational collapse and accretion, etc. 

The special RMHD equations are often formulated as a nonlinear system of hyperbolic conservation laws
\begin{equation}\label{eq:RMHD}
{\bf U}_t + \nabla \cdot {\bf F} ( {\bf U} ) = {\bf 0},
\end{equation}	
where $\nabla \cdot = \sum_{i=1}^d \frac{\partial }{\partial x_i} $ is the divergence operator 
with $d \in \{1,2,3\}$ denoting the spatial dimensionality. 
Here we employ the geometrized unit system so that the speed of light $c=1$. In \eqref{eq:RMHD}, the conservative vector $\vec U = ( D,\vec m,\vec B,E )^{\top}$, and the flux ${\bf F}=({\bf F}_1, \dots, {\bf F}_d)$ is defined by 
\begin{equation*}
	\vec F_i(\vec U) = \left( D v_i,  v_i \vec m  -  B_i \big( W^{-2} \vec B + (\vec v \cdot \vec B) \vec v \big)  + p_{tot}  \vec e_i, v_i \vec B - B_i \vec v ,m_i \right)^{\top},  
\end{equation*}
with the mass density $D = \rho W$, the momentum vector $\vec m = (\rho H {W^2} + |\vec B|^2) \vec v - (\vec v \cdot \vec B)\vec B$, the magnetic field ${\bf B}=(B_1,B_2,B_3)$, the energy $E=\rho H W^2 - p_{tot} +|\vec B|^2$, and 
 the vector $\vec e_i$ denoting the $i$-th row of the unit matrix of size $3$. 
Additionally, $\rho$ is the rest-mass density, $\vec v=(v_1,v_2,v_3)$ denotes the velocity field of 
the fluid, $W=1/\sqrt{1- |{\bf v}|^2}$ is the Lorentz factor, $p_{tot}$ is the total pressure consisting of the thermal pressure $p$ and the magnetic pressure $p_m:=\frac12 \left(W^{-2} |\vec B|^2 +(\vec v \cdot \vec B)^2 \right)$,
$H = 1 + e + \frac{p}{\rho}$ represents the specific enthalpy, and $e$ is the specific internal energy. 
The equation
of state (EOS) is needed to close the system \eqref{eq:RMHD}. A general EOS can be expressed as
\begin{equation}\label{eq:gEOS}
H = H(p,\rho).
\end{equation}
A simple example is the ideal EOS 
\begin{equation}\label{eq:iEOS}
H = 1 + \frac{\Gamma p}{(\Gamma -1) \rho},
\end{equation}
where $\Gamma \in (1,2]$ is a constant and denotes the adiabatic index, for which the restriction $\Gamma \le 2$ is required by the compressibility assumptions and the relativistic causality. 
Given an EOS, the conservative vector $\vec U$ and the flux ${\bf F}$ can be explicitly expressed by 
the primitive variables $\{ \rho, p, {\bf v}, {\bf B}\}$. 
However, unlike the non-relativistic case, 
there are no explicit formulas for either
the flux ${\bf F}$ or the primitive variables $\{ \rho, p, {\bf v}\}$ in terms of ${\bf U}$, due 
to the relativistic effect, especially the appearance of the Lorentz factor.

The magnetic field should also satisfy an additional divergence-free condition 
\begin{equation}\label{eq:DivB}
\nabla \cdot {\bf B} := \sum_{i=1}^d \frac{\partial B_i}{\partial x_i}=0, 
\end{equation}
which is a reflection of the principle that there are no magnetic monopoles. 
Although the satisfaction of 
\eqref{eq:DivB} is not explicitly included in 
the system \eqref{eq:RMHD}, 
the exact solution of \eqref{eq:RMHD} always preserves zero divergence for $\bf B$ in future time if the initial divergence is zero.  
Besides the
standard difficulty in solving the nonlinear hyperbolic systems,
an additional numerical challenge for the RMHD system \eqref{eq:RMHD} comes from
the divergence-free condition \eqref{eq:DivB}, which is also involved in the ideal non-relativistic MHD system. 
It is widely realized that the condition \eqref{eq:DivB} is important for robust computations, since  
large divergence error in the numerical magnetic field 
can lead to numerical instabilities or nonphysical structures in the computed solutions, cf.~\cite{Evans1988,BalsaraSpicer1999,Toth2000,Li2005}. 
In the one-dimensional case ($d=1$), $B_1$ is constant so that the condition \eqref{eq:DivB} can be easily enforced in numerical computations. 
However, in the multidimensional cases ($d\ge 2$), numerical preservation of \eqref{eq:DivB} 
is highly nontrivial, and various techniques have been proposed to reduce the divergence error or enforce the
divergence-free condition in the discrete sense; see e.g.,  
\cite{Evans1988,POWELL1999284,Toth2000,Dedner2002,Torrilhon2005,Li2005,Li2011,XuLiu2016,Fu2018} and the references therein.


In physics, the density, thermal pressure and internal energy are positive, and the fluid velocity must be 
slower than the speed of light in the vacuum $c=1$.  
Mathematically, an equivalent description is that the conservative vector ${\bf U}$ must stay in the set of physically admissible states 
defined by 
\begin{equation}\label{eq:RMHD:definitionG}
{\mathcal G} := \left\{ \vec U=(D,{\vec m},{\vec B},E)^{\top}:~\rho(\vec U)>0,~p(\vec U)>0,~e({\bf U})>0,~|{\bf v}(\vec U)| <1  \right\},
\end{equation}
where the functions $\rho(\vec U)$, $p(\vec U)$, $e({\bf U})$ and ${\bf v}(\vec U)$ are highly nonlinear and cannot be explicitly
formulated in terms of $\bf U$, due to the relativistic effect. 
In numerical computations, preserving the numerical solutions in $\mathcal G$ is highly desirable and 
crucial for the robustness of the numerical schemes. 
This is because once any physical constraints in \eqref{eq:RMHD:definitionG} are violated in the numerical simulations, the discrete problem becomes ill-posed due to the loss of hyperbolicity, causing 
the breakdown of the simulation codes. 
In the past several decades, various numerical schemes have been developed for the RMHD, e.g., \cite{komissarov1999godunov,del2007echo,MignoneHLLCRMHD,Host:2008,HeTang2012RMHD,Zanotti2015,BalsaraKim2016,ZhaoTang2017}. 
However, none of them were proven to preserve all these constraints, even though they have been used to simulate some RMHD flows successfully. 
In fact, most of the existing RMHD schemes do not always preserve these constraints, 
and thus may suffer from a large risk of failure when simulating RMHD problems with large Lorentz factor, low density or pressure, or strong discontinuity. 
It is therefore highly significant and desirable to develop 
physical-constraint-preserving (PCP) numerical schemes whose solutions always stay in the set ${\mathcal G}$.

During the past decade, 
significant progress has been made for constructing bound-preserving high-order accurate schemes for hyperbolic systems, mainly built on two types of limiters.
One is a simple scaling limiter
for the reconstructed or evolved solution polynomials in
finite volume or discontinuous Galerkin (DG) methods; see, e.g.,  \cite{zhang2010,zhang2010b,ZHANG2017301,WuShu2018,WuShu2019,ZouYuDai2019}. 
Another one is a flux-correction limiter, see, e.g.,  \cite{Xu2014,Hu2013,Liang2014,Christlieb}. 
For more developments, we refer interested readers to the survey \cite{Shu2018} and references therein. 
With these limiting approaches, several PCP methods were developed for the special relativistic hydrodynamics (RHD)
without the magnetic field, including high-order accurate PCP 
finite difference schemes \cite{WuTang2015}, PCP DG schemes \cite{QinShu2016}, PCP central DG schemes \cite{WuTang2017ApJS}, and PCP Lagrangian finite volume 
schemes \cite{LingDuanTang2019}. 
Extension of the PCP methods from special to general RHD is highly nontrivial.
An earlier effort \cite{Radice2014} was made in this direction but only enforced the 
positivity of density. 
Recently, frameworks of designing provably PCP high-order finite difference, finite volume and DG methods were established in \cite{Wu2017} for the general RHD. All of the aforementioned PCP methods were restricted to RHD without the
magnetic field.

Seeking PCP schemes for the RMHD is highly challenging, largely due to the intrinsic complexity of the RMHD equations and strong 
nonlinearity contained in the physical constraints in \eqref{eq:RMHD:definitionG}. 
As mentioned above, there are no explicit expressions of the highly nonlinear functions $\rho(\vec U)$, $p(\vec U)$, $e({\bf U})$ and ${\bf v}(\vec U)$ for the RMHD. Taking the ideal EOS case \eqref{eq:iEOS} as example, 
in order to obtain the values of $\{\rho,p,e,{\bf v}\}$ from a given vector $\vec U=(D,{\bf m}, {\bf B}, E)^\top$, one has to solve a nonlinear algebraic equation \cite{MignoneHLLCRMHD}: 
\begin{equation}\label{eq:RMHD:fU(xi)}
\theta
  - \frac{{\Gamma  - 1}}{\Gamma }\left( {\frac{ \theta }{{{{\Upsilon^2_{ \bf U}( \theta )}}}} - \frac{D}{ 
		\Upsilon_{ \bf U}( \theta )}} \right) + {\left| \vec B \right|^2} - \frac{1}{2}\left( {\frac{{{{\left| \vec B \right|}^2}}}{{{{\Upsilon^2_{ \bf U}( \theta )}}}} + \frac{{{{(\vec m \cdot \vec B)}^2}}}{{{ \theta ^2}}}} \right) - E = 0,  
\end{equation}
for the unknown $\theta \in \mathbb R^+$, where the function $\Upsilon_{ \bf U}( \theta )$ is defined by 
\begin{equation*}
\Upsilon_{ \bf U}( \theta ) = \left( \frac{ { \theta^2}{{( \theta + {|\vec B|^2})}^2} - \left[ { \theta^2}{|\vec m|^2} + (2 \theta+{|\vec B|^2})  {{(\vec m \cdot \vec B)}^2} \right] } {  { \theta^{2}} {{( \theta + {|\vec B|^2})}^{2}} }
\right)^{ - {1}/{2} }.
\end{equation*}
Assume that an admissible solution of the equation  \eqref{eq:RMHD:fU(xi)} exists for the given state $\bf U$, and denote it by $\hat{\theta}=\hat{\theta} (\vec U)$,
then the primitive variables in \eqref{eq:RMHD:definitionG} can be computed by
\begin{equation}\label{getprimformU}
\begin{aligned}
&
\vec v(\vec U) = \left( {\vec m + \hat \theta ^{ - 1}(\vec m \cdot \vec B) \vec B} \right)/( \hat \theta+ {|\vec B|^2}),
&\rho (\vec U) = \frac{D}{ \Upsilon_{ \bf U} ( \hat \theta )},\\
&
p(\vec U) = \frac{{\Gamma  - 1}}{{\Gamma \Upsilon_{ \bf U}^2( \hat \theta )}}\Big( {{ \hat \theta } - D \Upsilon_{ \bf U} ( \hat \theta )} \Big), & e({\bf U}) = 
\frac{p({\bf U})}{(\Gamma - 1) \rho ({\bf U}) }.
\end{aligned}
\end{equation}
As clearly shown in the above procedure, checking the admissibility of a given state ${\bf U}$ is already a very difficult task. 
On the other hand, in most of the numerical RMHD schemes, the conservative quantities are themselves evolved according to their own conservation laws, which are seemingly unrelated to and numerically do not necessarily guarantee the desired bounds of 
the computed primitive variables $\{\rho,p,e,{\bf v}\}$. 
In theory, it is indeed a challenge to make an a priori judgment on whether a scheme is always PCP under all circumstances or not.
Therefore, the study of PCP schemes for the RMHD has remained blank until the recent work in \cite{WuTangM3AS}, where several 
important mathematical properties of the set $\mathcal G$ were first derived 
and 
PCP finite volume and DG methods were developed for the conservative RMHD equations \eqref{eq:RMHD} in one space dimension.  
Moreover, for the multidimensional conservative RMHD equations, the theoretical analysis in \cite{WuTangM3AS} revealed that the PCP property of standard finite volume and DG methods is closely connected with 
a discrete divergence-free condition on the numerical magnetic field. 
This finding was further extended on general meshes in \cite{WuTangZAMP} and is 
consistent with the ideal non-relativistic MHD case \cite{Wu2017a}. 
It was also shown in \cite{WuTangM3AS,WuTangZAMP} that if the discrete divergence-free condition is slightly violated, even the first-order
multidimensional Lax-Friedrichs scheme for \eqref{eq:RMHD} is not PCP in general. 
Unfortunately, the required discrete divergence-free condition 
relies on certain combination of the information on adjacent cells, so that it could not be naturally 
enforced by any existing divergence-free techniques that also work  
in conjunction with the standard local scaling PCP limiter \cite{WuTangM3AS}.   
Therefore, the design of multidimensional 
PCP schemes for the RMHD has challenges essentially different from the one-dimensional case. As a result, 
provably PCP high-order schemes have {\em not} yet been obtained 
for the conservative RMHD system \eqref{eq:RMHD} 
in the multidimensional cases.

The focus of this paper is to 
develop a class of 
provably PCP high-order DG schemes 
for the multidimensional RMHD with a general EOS on general meshes. 
Towards achieving this goal, we will make the following efforts in this paper:   

1. First,	we investigate the PCP property of the exact solutions of the conservative RMHD system \eqref{eq:RMHD} at the PDE level. 
	We observe that, if the divergence-free condition \eqref{eq:DivB} is (slightly) violated, 
	the exact smooth solution of \eqref{eq:RMHD} may fail to be PCP, i.e., $\mathcal G$ is not 
	an invariant region for the exact solution of \eqref{eq:RMHD}. 
Therefore, before seeking provably PCP numerical schemes, our first task is to reformulate 
	the RMHD equations so as to accommodate the PCP property at the
PDE level. 
	We consider a symmetrizable formulation of the RMHD equations, which we recently proposed in \cite{WuShu2019SISC}, by 
	building the divergence-free condition \eqref{eq:DivB} into 
	the equations through adding a source term. 
	We show that the exact smooth solutions of 
	the symmetrizable RMHD system 
	always retain the PCP property even if the magnetic field is {\em not} divergence-free.  

2. Based on the symmetrizable formulation, we establish a framework of constructing 
	provably PCP high-order DG schemes 
	for the multidimensional RMHD with a general EOS on general meshes. 
	The key is to properly discretize the symmetrizable RMHD equations so as to eliminate the effect of divergence error on the  PCP property 
	of the resulting DG schemes. 
	We adopt the locally divergence-free DG elements, which enforce zero divergence within each cell, and a suitable discretization of the symmetrization source term, which brings some discrete divergence terms into our schemes and  
	exactly offsets the influence of divergence error on the PCP property.

3. A significant innovation in this paper is that we discover and rigorously prove the 
	PCP property of the proposed DG schemes, without requiring any discrete divergence-free condition. 
	There are two main technical challenges in the proof. 
	One is how to explicitly and analytically verify the admissibility of any given conservative state ${\bf U}$,  
	without solving the nonlinear equation \eqref{eq:RMHD:fU(xi)}.  
	This difficulty has been addressed in \cite{WuTangM3AS} based on two equivalent forms of the admissible state set $\mathcal G$. 
	The other is how to take the advantages of the 
	locally divergence-free property and our suitable discretization of the source term in the symmetrizable RMHD formulation, to eliminate the effect of divergence error on the PCP property.  
	Due to the locally divergence-free property and the source term,  
	 the limiting values of the numerical solution at the interfaces of each cell  
	are intrinsically coupled, making some standard analysis techniques (\cite{zhang2010b}) inapplicable. 
	We will overcome this difficulty   
	by using a novel ``quasi-linearization'' approach to handle the highly nonlinear constraints in \eqref{eq:RMHD:definitionG}, 
	technical splitting to offset the influence of divergence error, 
	and sophisticated estimates to analyze the beneficial effect of the symmetrization source term. 

4. 	We implement the proposed PCP DG schemes on two-dimensional Cartesian meshes and 
demonstrate  
their accuracy, effectiveness and robustness for several numerical examples.  
We will show that our PCP schemes, 
without any artificial treatments, are able to successfully simulate several    
challenging problems, including a strongly magnetized bast problem with extremely low plasma-beta ($2.5 \times 10^{-10}$) 
and highly supersonic RMHD jets, which are rarely considered in the literature. 	

The present study is also motivated by our recent work \cite{WuShu2018,WuShu2019} on the positivity-preserving DG schemes for 
the ideal non-relativistic MHD. 
Compared to the non-relativistic case, 
the present study is much more challenging, due to the highly nonlinear coupling of the RMHD equations and the complicated mapping from the conservative to primitive variables. 
Additional technical challenges also arise from the suitable discretization of the symmetrization source term and especially some novel estimate techniques required to analyze its beneficial effect 
on the PCP property.


\section{Auxiliary observations on the PCP property at the PDE level} 

This section introduces our observations on 
 the PCP property of the exact smooth solutions of   
the conservative formulation \eqref{eq:RMHD} and a symmetrizable formulation of the RMHD equations, respectively, with the ideal EOS \eqref{eq:iEOS}. 
The findings will provide some insights that guide us to successfully construct the  
PCP schemes for the RMHD.

We observe that negative pressure may appear in the exact smooth solution of the conservative RMHD system \eqref{eq:RMHD} if $\nabla \cdot {\bf B}\neq 0$. 
An evidence, rather than rigorous proof,  
for this claim may be given by considering the following initial condition 
\begin{equation}\label{eq:np:data1}
\begin{aligned}
&	\rho({\bf x},0)= 1, \qquad p({\bf x},0)=1-\exp(-|{\bf x}|^2),
\\
& {\bf v} ({\bf x},0)=  (0.01,~0.01,~0.01 ), \qquad {\bf B}({\bf x},0) 
= (1+\delta B_1,~1+\delta B_2,~1+\delta B_d), 
\end{aligned}
\end{equation}
where ${\bf x}=(x_1,\dots,x_d)$, and 
$\delta B_i=\epsilon \arctan x_i$, $1\le i \le 3$, are small perturbations 
with $0<\epsilon \ll 1$. 
Since the initial solution \eqref{eq:np:data1} is bounded and infinitely differentiable, it is reasonable to assume:     
there exists a small time interval $[0,t_*)$ such that 
the exact solution of the system \eqref{eq:RMHD} with \eqref{eq:np:data1} exists and is smooth for $t \in [0,t_*)$. 
Since $|{\bf v} ({\bf 0},0)|-1 = -0.97 < 0$ and $\rho({\bf 0},0) = 1>0$, by 
the sign-preserving property for continuous functions, there exists a neighborhood $\Omega$ of $\bf 0$ in $\mathbb R^d$ and $t_0 \in (0,,t_*)$ such that $|{\bf v} ({\bf x},t)|-1<0$ and $\rho({\bf x},t)>0$ for all $({\bf x},t) \in \Omega \times [0,t_0)$. 
Let us then study the initial time derivative of $p\rho^{-\Gamma}$ at $({\bf x},t)=({\bf 0},0)$. 
For smooth solutions, we derive from  
\eqref{eq:RMHD} that 
$
\frac{\partial }{\partial t} \left( p\rho^{-\Gamma} \right) + {\bf v} \cdot \nabla \left( p\rho^{-\Gamma} \right)   
+ (\Gamma-1) \rho^{-\Gamma}  ( {\bf v} \cdot {\bf B} )  \nabla \cdot {\bf B} = 0.
$
At $({\bf x},t)=({\bf 0},0)$, we have $\nabla \left( p\rho^{-\Gamma} \right)={\bf 0}$ and $\nabla \cdot {\bf B}=  d \epsilon  > 0$, which yield  
$
\frac{\partial \left( p\rho^{-\Gamma} \right) }{\partial t}   ({\bf 0},0) = - 0.03 d (\Gamma-1)  \epsilon < 0.
$ 
Note that $p\rho^{-\Gamma} ({\bf 0},0)=0$. 
Thus there exists $t_1 \in [0,t_0)$ such that 
$
p\rho^{-\Gamma} ({\bf 0},t)   < 0,~\forall t \in (0,t_1).
$ 
Because $\rho({\bf x},t)>0$ for all $({\bf x},t) \in \Omega \times [0,t_0)$, we have 
$
p ({\bf 0},t)   < 0,~\forall t \in (0,t_1).
$ 

The above analysis infers that the exact smooth solution of the 
conservative RMHD system \eqref{eq:RMHD} 
may fail to be PCP    
if the divergence-free condition \eqref{eq:DivB} is violated.  
This observation, along with the results in \cite{WuTangM3AS} at the
numerical level, demonstrate
the unity of continuous and discrete objects, and 
clearly reveal the intrinsic connection between the PCP property and divergence-free condition. 
In most of the numerical RMHD schemes including the standard DG methods, the divergence error in magnetic field is 
generally unavoidable, although there exist a few numerical techniques to enforce exactly or globally divergence-free property (e.g., \cite{Li2011,XuLiu2016,Fu2018}). 
On the other hand, 
the standard PCP limiting technique (cf.~\cite{zhang2010b,WuTangM3AS}) with local scaling can destroy the globally divergence-free property. It is therefore difficult to find a numerical technique which can enforce the globally 
divergence-free property and meet the PCP requirement  
at the same time. 
In order to address the above issue, we propose to consider a symmetrizable formulation of the RMHD equations \cite{WuShu2019SISC}
\begin{equation}\label{ModRMHD}
{\bf U}_t 
+ \nabla \cdot {\bf F} ({\bf U})   = - {\bf S} ( {\bf U} )  \nabla \cdot {\bf B}, 
\end{equation}
where 
\begin{equation}\label{eq:source}
{\bf S} ( {\bf U} ) := \left( 0,~  ( 1-|{\bf v}|^2 ) {\bf B}  + ({\bf v} \cdot {\bf B}) {\bf v},~ {\bf v},~ {\bf v} \cdot {\bf B} \right)^\top. 
\end{equation}
The system \eqref{ModRMHD} is analogous to the  Godunov--Powell system \cite{Godunov1972,Powell1994} for the ideal non-relativistic MHD. 
The right-hand side term of \eqref{ModRMHD}  is proportional 
to $\nabla \cdot {\bf B}$. 
This implies, at the continuous level, the two formulations \eqref{ModRMHD} and \eqref{eq:RMHD} are equivalent 
under the condition \eqref{eq:DivB}. 
However, the ``source term'' ${\bf S} ( {\bf U} )  \nabla \cdot {\bf B}$ in \eqref{ModRMHD} 
modifies the character of the equations, 
making 
the system \eqref{ModRMHD} symmetrizable, admit a convex thermodynamic entropy pair, 
and play a key role in designing entropy stable schemes \cite{WuShu2019SISC}. 
These good properties do not hold for the conservative RMHD system \eqref{eq:RMHD}.

Interestingly, we find that the exact smooth solutions of 
the symmetrizable RMHD system \eqref{ModRMHD} 
always retain the desired PCP property at the PDE level, even if the divergence-free condition \eqref{eq:DivB} is not satisfied. 
Consider the initial-value problem of the system 
\eqref{ModRMHD}, for ${\bf x}\in {\mathbb R}^d$ and $t>0$, with initial data 
\begin{equation}\label{eq:initialA}
(\rho,{\bf v}, p,{\bf B}) ( {\bf x}, 0 ) = 
(\rho_0,{\bf v}_0, p_0,{\bf B}_0) ({\bf x}),
\end{equation}
where the magnetic field is not necessarily divergence-free. 
Using the method of characteristics one can show the following result, whose proof is given in Appendix \ref{app:proof}.

\begin{proposition}\label{thm:PCP_PDElevel} 
	Assume the initial data \eqref{eq:initialA} 
	are in $C^1({\mathbb R}^d)$ with $\rho_0({\bf x})>0$, $p_0({\bf x})>0,$ 
	and $|{\bf v}_0( {\bf x} )|<1$, $\forall {\bf x} \in {\mathbb R}^d$. 
	If the initial-value problem of \eqref{ModRMHD} with  \eqref{eq:initialA} has a $C^1$ solution $(\rho,{\bf v}, p,{\bf B}) ( {\bf x}, t )$ for ${\bf x} \in {\mathbb R}^d$ and $0\le t \le T$, 
	then the solution satisfies 
	$$\rho({\bf x},t)>0,~ 
	p({\bf x},t)>0,~ e({\bf x},t)>0,~ |{\bf v}( {\bf x}, t )|<1, \quad \forall {\bf x} \in {\mathbb R}^d,~ \forall t \in [0, T].$$ 
	In addition, if assuming the solution is $C^2$, then it holds
	\begin{equation}\label{maxminDivB}
	\min_{ {\bf x} \in \mathbb R^d}  \frac{\nabla \cdot {\bf B}}{\rho W}  ( {\bf x}, 0 ) \le 
	\frac{\nabla \cdot {\bf B}}{\rho W }  ( {\bf x}, t ) \le \max_{ {\bf x} \in \mathbb R^d}  \frac{\nabla \cdot {\bf B}}{\rho W }  ( {\bf x}, 0 ), \qquad \forall t \in [0, T].
	\end{equation}
\end{proposition}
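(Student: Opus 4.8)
I would follow the method of characteristics applied to the smooth primitive-variable form of \eqref{ModRMHD}. The backbone is a trio of transport identities for $C^1$ (resp.\ $C^2$) solutions of \eqref{ModRMHD}: (i) the continuity law $D_t+\nabla\cdot(D\mathbf{v})=0$ with $D:=\rho W$, which is \emph{unchanged} from \eqref{eq:RMHD} since the density slot of $\mathbf{S}(\mathbf{U})$ in \eqref{eq:source} vanishes; (ii) the adiabatic transport $\partial_t(p\rho^{-\Gamma})+\mathbf{v}\cdot\nabla(p\rho^{-\Gamma})=0$; and (iii), under the $C^2$ hypothesis, $\partial_t(\nabla\cdot\mathbf{B})+\nabla\cdot\!\big((\nabla\cdot\mathbf{B})\,\mathbf{v}\big)=0$, which I would obtain by noting that the $\mathbf{B}$-equation of \eqref{eq:RMHD} is the ideal induction equation $\mathbf{B}_t=\nabla\times(\mathbf{v}\times\mathbf{B})$, so that of \eqref{ModRMHD} reads $\mathbf{B}_t=\nabla\times(\mathbf{v}\times\mathbf{B})-\mathbf{v}\,\nabla\cdot\mathbf{B}$, and taking the divergence kills the curl term. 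Identity (ii) is the decisive one, and I expect it to be the main obstacle: the excerpt already shows that for the conservative system the left side of (ii) equals $-(\Gamma-1)\rho^{-\Gamma}(\mathbf{v}\cdot\mathbf{B})\,\nabla\cdot\mathbf{B}$, so what must be verified is that the Godunov--Powell-type source $-\mathbf{S}(\mathbf{U})\,\nabla\cdot\mathbf{B}$ — through its momentum slot $(1-|\mathbf{v}|^2)\mathbf{B}+(\mathbf{v}\cdot\mathbf{B})\mathbf{v}$ and energy slot $\mathbf{v}\cdot\mathbf{B}$ — contributes exactly $+(\Gamma-1)\rho^{-\Gamma}(\mathbf{v}\cdot\mathbf{B})\,\nabla\cdot\mathbf{B}$ to the evolution of $p\rho^{-\Gamma}$, cancelling the offending term. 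This is the structural reason the symmetrizable formulation, unlike the conservative one, is PCP at the PDE level, and checking it requires patiently propagating the source contributions through the nonlinear momentum and energy equations together with the relativistic expressions for $\mathbf{m}$, $E$, and $p_{tot}$.

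Granting the identities, the rest is clean. The subluminal bound $|\mathbf{v}(\mathbf{x},t)|<1$ is essentially built into the hypothesis that $(\rho,\mathbf{v},p,\mathbf{B})$ is a $C^1$ \emph{solution of the conservation-law system} \eqref{ModRMHD}, since $W=(1-|\mathbf{v}|^2)^{-1/2}$, hence $\mathbf{U}$, is defined and $C^1$ only where $|\mathbf{v}|<1$; one can also argue it directly, for if $|\mathbf{v}|$ first reached $1$ along a characteristic at time $t_*$ then $D$ stays bounded there by (i), forcing $\rho=D/W\to0$, $p=(p\rho^{-\Gamma})\rho^{\Gamma}\to0$, $H\to1$, and $\rho HW^2=D\,HW\to\infty$, whence $\mathbf{m}$ blows up, contradicting $\mathbf{U}\in C^1$. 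With $|\mathbf{v}|<1$ in hand, positivity follows by integrating (i) and (ii) along the fluid characteristics $\dot{\mathbf{X}}=\mathbf{v}(\mathbf{X},t)$ (well defined on $[0,T]$ as $\mathbf{v}\in C^1$): $D(\mathbf{X}(t),t)=D_0(\mathbf{x}_0)\exp\!\big(-\!\int_0^t\nabla\cdot\mathbf{v}\,ds\big)\in(0,\infty)$ and $(p\rho^{-\Gamma})(\mathbf{X}(t),t)=(p_0\rho_0^{-\Gamma})(\mathbf{x}_0)>0$; then $\rho=D/W>0$ (as $W<\infty$), $p=(p\rho^{-\Gamma})\rho^{\Gamma}>0$, and $e=p/[(\Gamma-1)\rho]>0$ at every space-time point, each being the head of a characteristic reaching $t=0$.

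Finally, for \eqref{maxminDivB} I would observe that (i) and (iii) are the \emph{same} linear transport--continuity equation with advecting field $\mathbf{v}$, so the ratio $(\nabla\cdot\mathbf{B})/(\rho W)$ is materially advected: $\frac{d}{dt}\big[(\nabla\cdot\mathbf{B})/(\rho W)\big](\mathbf{X}(t),t)=0$. Evaluating at the foot of the characteristic through an arbitrary $(\mathbf{x},t)$ gives $\big[(\nabla\cdot\mathbf{B})/(\rho W)\big](\mathbf{x},t)=\big[(\nabla\cdot\mathbf{B})/(\rho W)\big](\mathbf{x}_0,0)$, which lies between $\min_{\mathbb{R}^d}$ and $\max_{\mathbb{R}^d}$ of $(\nabla\cdot\mathbf{B})/(\rho W)$ at $t=0$, yielding \eqref{maxminDivB}.
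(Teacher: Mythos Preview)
Your proposal is correct and follows essentially the same route as the paper: method of characteristics along the fluid flow $\dot{\mathbf{X}}=\mathbf{v}$, the continuity identity for $D=\rho W$, the adiabatic transport $\frac{\mathcal{D}}{\mathcal{D}t}(p\rho^{-\Gamma})=0$ for the symmetrizable system, and (under $C^2$) the fact that $(\nabla\cdot\mathbf{B})/(\rho W)$ is materially conserved. The paper's proof is terser---it simply asserts the entropy transport identity and cites \cite{WuShu2019SISC} for the constancy of $(\nabla\cdot\mathbf{B})/(\rho W)$ along characteristics---whereas you spell out why the Godunov--Powell source furnishes the cancellation in (ii) and derive (iii) directly from the modified induction equation; these are welcome elaborations, not a different argument.
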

For smooth solutions of the modified RMHD system \eqref{ModRMHD}, the estimate \eqref{maxminDivB} implies that the ``relative'' divergence $\| 
\rho^{-1} W^{-1} {\nabla \cdot {\bf B}}{ }  ( \cdot, t ) \|_{L^\infty}$ does not grow with $t$.

Analogous to the Powell source term for the ideal non-relativistic MHD system 
\cite{POWELL1999284,WuShu2018,WuShu2019}, 
the source term 
in the symmetrizable RMHD system \eqref{ModRMHD} is 
non-conservative, but is necessary to accommodate the PCP property at 
the PDE level when the divergence-free condition \eqref{eq:DivB} is not exactly satisfied. 
Therefore, in order to achieve the PCP property at the discrete level, our schemes 
in this paper will be constructed using the symmetrizable formulation \eqref{ModRMHD}, which renders additional technical challenges in discretizing the source term properly to ensure its compatibility with the PCP property. 
As mentioned in \cite{WuShu2018,WuShu2019} on the non-relativistic MHD, there is 
a conflict between the PCP property which requires the non-conservative source term, and the conservation property which is lost due to the source term. 
The loss of conservation property leaves the possibility that it 
may lead to incorrect resolutions for some discontinuous problems, which will be investigated carefully in 
a separate study. 

\section{Numerical analysis techniques}

In this section, we will introduce 
several important properties of $\mathcal G$ and derive some technical estimates, 
which will be useful in the PCP analysis of the proposed numerical schemes.

\subsection{Properties of admissible states}\label{sec:states}
Throughout the rest of this paper, we consider a general causal EOS \eqref{eq:gEOS} satisfying 
\begin{equation}\label{eq:EOScond}
\begin{cases}
\mbox{The function $H(p,\rho)$ in \eqref{eq:gEOS} is differentiable in $\mathbb R^+\times \mathbb R^+$}, &
\\
H(p,\rho) \ge \sqrt{1+p^2/\rho^2}+p/\rho, \qquad \qquad \quad~ \forall p, \rho>0,
\\
H(p,\rho) \left(\frac1{\rho} - \frac{\partial H(p,\rho)}{\partial p} \right) < \frac{\partial H(p,\rho)}{\partial \rho} < 0, \qquad \forall p, \rho>0,
\end{cases}
\end{equation}
where the second condition is revealed by the relativistic kinetic theory \cite{WuTang2017ApJS}, and 
the third condition can be derived from the relativistic causality and the assumption that the fluid's coefficient of thermal expansion is positive \cite{WuTang2017ApJS}, which is valid for most compressible fluids, e.g., the gases. 
The conditions in \eqref{eq:EOScond} are valid for the ideal EOS \eqref{eq:iEOS} and some other commonly used EOSs; see \cite{WuTang2017ApJS}.

In order to overcome the challenges arising from the lack of
explicit formulas of the functions in \eqref{eq:RMHD:definitionG}, 
the following two equivalent forms of ${\mathcal G}$ were rigorously derived in \cite{WuTangM3AS} 
for the ideal EOS \eqref{eq:iEOS} and in \cite{WuTangZAMP} for a general EOS \eqref{eq:gEOS} satisfying \eqref{eq:EOScond}.

\begin{lemma}[\bf First equivalent form]\label{theo:RMHD:CYconditionFINAL2}
	The admissible state set~${\mathcal G}$ is {\bf \em equivalent} to the set
	\begin{align}
	{\mathcal G}_1 := \left\{   \vec U=(D,\vec m,\vec B,E)^{\top}:~  D>0,~q(\vec U)>0,~ \Psi (\vec U) > 0 \right\},
	\label{eq:RMHD:definitionG2}
	\end{align}
	where $q(\vec U):= E-\sqrt{D^2+|\vec m|^2}$ and 
	$$
	\Psi (\vec U) := \big( \Phi(\vec U)-2(|\vec B|^2-E) \big) \sqrt{\Phi(\vec U)+|\vec B|^2-E} - \sqrt{ \frac{27}{2} \bigg( D^2|\vec B|^2+(\vec m \cdot \vec B)^2 \bigg)},
	$$
	with ${\Phi(\vec U):}= \sqrt{({|\vec B|^2} - E)^2 + 3({E^2} - {D^2} - |\vec m|^2)}$.
\end{lemma}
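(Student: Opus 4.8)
The plan is to prove the identity $\mathcal{G}=\mathcal{G}_1$ by establishing the two inclusions, after reducing the characterization of $\mathcal{G}$ to a scalar root-finding problem. Since $D=\rho W$ with $W\ge 1$, the condition $D>0$ is equivalent to $\rho>0$ and is plainly necessary for either set, so I would fix $\vec U$ with $D>0$ and concentrate on the remaining constraints. The central device is the ``master variable'' $\theta:=\rho H W^{2}>0$: by the reconstruction formulas \eqref{getprimformU}, knowing $\theta$ determines all primitive quantities explicitly, with $\Upsilon_{\vec U}(\theta)=W$, $\rho=D\,\Upsilon_{\vec U}^{-1}(\theta)$, $|\vec v|^{2}=1-\Upsilon_{\vec U}^{-2}(\theta)$, and $\rho(H-1)=\theta\,\Upsilon_{\vec U}^{-2}(\theta)-D\,\Upsilon_{\vec U}^{-1}(\theta)$; and, by the monotonicity assumptions in \eqref{eq:EOScond}, positivity of $p$, of $e$, and of $\rho(H-1)$ are all equivalent. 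The first step is then the reduction: $\vec U\in\mathcal{G}$ if and only if $D>0$ and the scalar function $f_{\vec U}(\theta)$ on the left of \eqref{eq:RMHD:fU(xi)} has a root $\hat\theta$ in the physical interval
\[
\mathcal{I}_{\vec U}:=\bigl\{\theta>0:\ \Upsilon_{\vec U}(\theta)\ \text{is real},\ \theta>D\,\Upsilon_{\vec U}(\theta)\bigr\},
\]
in which ``$\Upsilon_{\vec U}$ real'' encodes $|\vec v|<1$ and ``$\theta>D\,\Upsilon_{\vec U}(\theta)$'' encodes $p>0$, while $\rho=D\,\Upsilon_{\vec U}^{-1}(\theta)>0$ is automatic.

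For the inclusion $\mathcal{G}\subseteq\mathcal{G}_1$, I would start from admissible primitive variables, write $D$, $E$, $|\vec m|^{2}$, $\vec m\cdot\vec B$, $|\vec B|^{2}$ in terms of $\{\rho,\vec v,p,\vec B\}$, and verify $q(\vec U)>0$ and $\Psi(\vec U)>0$ directly. The bound $q(\vec U)>0$, i.e.\ $E>\sqrt{D^{2}+|\vec m|^{2}}$, should follow by checking $E>0$ and $E^{2}-D^{2}-|\vec m|^{2}>0$; after squaring and simplification this last inequality reduces to $H^{2}\ge 1+2Hp/\rho$, which is precisely the second EOS condition in \eqref{eq:EOScond} (there written $H\ge\sqrt{1+p^{2}/\rho^{2}}+p/\rho$). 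Since $E^{2}-D^{2}-|\vec m|^{2}=q(\vec U)\bigl(E+\sqrt{D^{2}+|\vec m|^{2}}\bigr)$, it then follows that the radicand of $\Phi(\vec U)$ is positive and that $\Phi(\vec U)+|\vec B|^{2}-E>0$ automatically. The remaining inequality $\Psi(\vec U)>0$ I would obtain by expressing $\Phi(\vec U)$, $\vec m\cdot\vec B$, and $D^{2}|\vec B|^{2}+(\vec m\cdot\vec B)^{2}$ through $\theta$ and the primitive variables, and reducing $\Psi(\vec U)>0$ to the negativity of a cubic-type polynomial evaluated at the point fixed by $\Upsilon_{\vec U}(\hat\theta)$ — the constant $\sqrt{27/2}$ and the two-factor form of $\Psi$ point to a cubic/quartic discriminant — which holds once $p>0$ and $|\vec v|<1$; as a consistency check, when $\vec B=\vec 0$ the condition $\Psi(\vec U)>0$ reduces essentially to $\Phi(\vec U)>E$, i.e.\ to $q(\vec U)>0$, recovering the known characterization of admissible states for relativistic hydrodynamics.

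For the converse $\mathcal{G}_1\subseteq\mathcal{G}$, I would take $D>0$, $q(\vec U)>0$, $\Psi(\vec U)>0$ and show that $f_{\vec U}$ has a root in $\mathcal{I}_{\vec U}$. Here I would study $f_{\vec U}$ on $\mathcal{I}_{\vec U}$: using the third EOS condition in \eqref{eq:EOScond}, show it is strictly monotone there; compute $\lim_{\theta\to+\infty}f_{\vec U}(\theta)=+\infty$; and compute the limit of $f_{\vec U}$ at the lower endpoint of $\mathcal{I}_{\vec U}$ — the surface $\theta=D\,\Upsilon_{\vec U}(\theta)$ where $p=0$ — showing that existence of an admissible root is exactly equivalent to the pair $q(\vec U)>0$ and $\Psi(\vec U)>0$, the first guaranteeing nonemptiness of $\mathcal{I}_{\vec U}$ and well-definedness of its endpoint, the second fixing the sign of the endpoint limit. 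The intermediate value theorem then produces a (unique) root $\hat\theta\in\mathcal{I}_{\vec U}$, and membership $\hat\theta\in\mathcal{I}_{\vec U}$ directly gives $\rho>0$, $p>0$, $e>0$, $|\vec v|<1$, hence $\vec U\in\mathcal{G}$. The degenerate cases $\vec B=\vec 0$, $\vec m\cdot\vec B=0$, and $\vec m=\vec 0$ — where the formula for $\Upsilon_{\vec U}$ and the set $\mathcal{I}_{\vec U}$ simplify or risk singularities — I would handle separately.

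The step I expect to be the main obstacle is the quantitative matching, in both directions, between the endpoint behaviour of $f_{\vec U}$ on $\mathcal{I}_{\vec U}$ and the exact algebraic forms of $q(\vec U)$ and, above all, $\Psi(\vec U)$. Near the endpoint $\theta_{*}$ of $\mathcal{I}_{\vec U}$ — itself a root of a quartic, since $\Upsilon_{\vec U}^{-2}(\theta)$ is a rational function of $\theta$ — one must put $f_{\vec U}(\theta_{*})$ in closed form, and distilling from this the clean condition $\Psi(\vec U)>0$, with the precise constant $\sqrt{27/2}$ and the factored shape $\bigl(\Phi(\vec U)-2(|\vec B|^{2}-E)\bigr)\sqrt{\Phi(\vec U)+|\vec B|^{2}-E}$, is the delicate computational heart: it amounts to identifying the relevant cubic resolvent and its discriminant, and requires a careful change of variables together with an extremal estimate over the physical range. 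By comparison, the reduction to the scalar equation, the triviality of $D>0$, and the monotonicity/intermediate-value skeleton are routine.
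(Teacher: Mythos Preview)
The paper does not actually prove this lemma: immediately before its statement the authors write that these equivalent forms ``were rigorously derived in \cite{WuTangM3AS} for the ideal EOS \eqref{eq:iEOS} and in \cite{WuTangZAMP} for a general EOS,'' and no argument is given in the present paper. So there is no in-paper proof to compare against; your proposal is an attempt to reconstruct the argument from the cited works.

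That said, your outline is broadly on the right track and is indeed in the spirit of the cited references: reduce admissibility to a scalar root problem in $\theta=\rho H W^{2}$, study the sign of $f_{\vec U}$ at the boundary of the physical interval, and identify the boundary condition with the explicit inequalities $q(\vec U)>0$ and $\Psi(\vec U)>0$. Two points deserve care. First, you anchor the argument on the scalar equation \eqref{eq:RMHD:fU(xi)}, which is specific to the ideal EOS (it contains $\Gamma$), while the lemma is stated for a general EOS satisfying \eqref{eq:EOScond}; for the general case the scalar equation has the form $f_{\vec U}(\theta)=\theta - p\bigl(H(\theta),\rho(\theta)\bigr) + |\vec B|^{2} - \tfrac{1}{2}\bigl(|\vec B|^{2}\Upsilon_{\vec U}^{-2}(\theta)+(\vec m\cdot\vec B)^{2}\theta^{-2}\bigr)-E$, and monotonicity must come from \eqref{eq:EOScond} rather than from the ideal-gas formula. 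Second, the crucial identification you flag as the main obstacle is, in the cited derivations, not obtained by evaluating $f_{\vec U}$ at the $p=0$ endpoint but rather by analyzing when the interval $\{\theta:\Upsilon_{\vec U}^{-2}(\theta)>0\}$ is nonempty and compatible with the energy relation: the condition $\Psi(\vec U)>0$ is exactly the positivity of a cubic in $\tau=\theta+|\vec B|^{2}$ (namely $\tau^{3}+2(|\vec B|^{2}-E)\tau^{2}+(|\vec B|^{2}-E)^{2}\tau$ minus a constant term), and the factors $\Phi-2(|\vec B|^{2}-E)$ and $\sqrt{\Phi+|\vec B|^{2}-E}$ together with the $\sqrt{27/2}$ arise from the closed-form extremum of that cubic, not from a discriminant. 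Reorienting the ``endpoint'' analysis this way will make the algebra in the converse direction close cleanly.
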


\begin{lemma}[\bf Second equivalent form]\label{theo:RMHD:CYcondition:VecN}
	The admissible state set ${\mathcal G}$ or ${\mathcal G}_1$ is {\bf \em equivalent} to the set
	\begin{align}
	\nonumber
	{\mathcal G}_2 := \left\{   \vec U=(D,\vec m,\vec B,E)^{\top}:~  D>0,~ \vec U \cdot
	{{ {\bm \xi}^*}} + {p^*_m} >0,\forall {\bf B}^* \in \mathbb R^3, \forall {\bf v}^* \in \mathbb B_1({\bf 0}) \right\},
	\label{eq:RMHD:CYcondition:VecNG1} 
	\end{align}
	where $\mathbb B_1({\bf 0}):= \{ {\bf x} \in \mathbb R^3: |{\bf x}|<1  \}$ denotes the open unit ball centered at ${\bf 0}$ in $\mathbb R^3$, and  
	\begin{align}\label{eq:RMHD:vecns}
	&{\bm \xi}^* = {\left( - \sqrt {1 - {|\vec v^*|}^2} ,~
		- {\vec v}^*,~ - (1 - {|\vec v^*|}^2) {\vec B}^* - ({\vec v}^* \cdot {\vec B}^*) {\vec v}^*,~1 \right)^{\top}},\\
	& p_{m}^*  = \frac{ (1-{|\vec v^*|}^2) |{\vec B}^*|^2 +({\vec v}^* \cdot {\vec B}^*)^2 }{2}. \label{eq:RMHD:vecns2}
	\end{align}
\end{lemma}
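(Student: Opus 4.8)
The plan is to invoke Lemma~\ref{theo:RMHD:CYconditionFINAL2} (which gives $\mathcal G=\mathcal G_1$) and then prove $\mathcal G=\mathcal G_2$ directly, by establishing two inclusions. The first move I would make is a reduction in the test magnetic field $\vec B^*$: for each fixed $\vec v^*\in\mathbb B_1({\bf 0})$, the map $\vec B^*\mapsto\vec U\cdot{\bm \xi}^*+p_m^*$ is a convex quadratic (its quadratic part $p_m^*$ is a positive definite form since $|\vec v^*|<1$, and its linear part is the $\vec B$-block of $\vec U\cdot{\bm \xi}^*$), and a short gradient computation shows its unique minimizer over $\mathbb R^3$ is $\vec B^*=\vec B$, with
\begin{equation*}
\vec U\cdot{\bm \xi}^*+p_m^*=g_{\vec U}(\vec v^*)+\frac{1}{2}\Big[(1-|\vec v^*|^2)|\vec B^*-\vec B|^2+(\vec v^*\cdot(\vec B^*-\vec B))^2\Big],
\end{equation*}
where $g_{\vec U}(\vec v^*):=E-\vec m\cdot\vec v^*-D\sqrt{1-|\vec v^*|^2}-\frac{1}{2}\Big[(1-|\vec v^*|^2)|\vec B|^2+(\vec v^*\cdot\vec B)^2\Big]$, which is itself a convex function of $\vec v^*$ on $\mathbb B_1({\bf 0})$ whenever $D>0$. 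Consequently $\vec U\in\mathcal G_2$ if and only if $D>0$ and $g_{\vec U}(\vec v^*)>0$ for all $\vec v^*\in\mathbb B_1({\bf 0})$, so the whole question is reduced to the single scalar inequality $g_{\vec U}>0$.

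For $\mathcal G\subseteq\mathcal G_2$, I would take $\vec U\in\mathcal G$ with primitive variables $(\rho,\vec v,p,\vec B)$, so $D=\rho W>0$, and substitute the expressions of $D,\vec m,E$ in these variables into $g_{\vec U}$. After simplification (using $H=1+e+p/\rho$ and $p_m=\frac{1}{2}(W^{-2}|\vec B|^2+(\vec v\cdot\vec B)^2)$) one obtains
\begin{equation*}
g_{\vec U}(\vec v^*)=\rho W\Big(HW(1-\vec v\cdot\vec v^*)-\sqrt{1-|\vec v^*|^2}\Big)-p+\frac{1}{2}\Big(|\vec B|^2|\vec v-\vec v^*|^2-((\vec v-\vec v^*)\cdot\vec B)^2\Big).
\end{equation*}
The last bracket is nonnegative by Cauchy--Schwarz; for the first term, the identity $W^2(W^*)^2(1-\vec v\cdot\vec v^*)^2-1=W^2(W^*)^2\big(|\vec v-\vec v^*|^2-|\vec v\times\vec v^*|^2\big)\ge0$ (with $W^*:=(1-|\vec v^*|^2)^{-1/2}$) gives $WW^*(1-\vec v\cdot\vec v^*)\ge1$, and $H>1$ holds by \eqref{eq:EOScond}. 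Using convexity of $\vec v^*\mapsto HW(1-\vec v\cdot\vec v^*)-\sqrt{1-|\vec v^*|^2}$, its minimum over $\mathbb B_1({\bf 0})$ equals $HW-\sqrt{H^2W^2-H^2+1}$, so that
\begin{equation*}
g_{\vec U}(\vec v^*)\ge\rho W\Big(HW-\sqrt{H^2W^2-H^2+1}\Big)-p>\frac{\rho(H^2-1)}{2H}-p\ge0,
\end{equation*}
the strict step using $1-\sqrt{1-a}>a/2$ with $a=(H^2-1)/(H^2W^2)\in(0,1)$, and the final step being exactly the EOS bound $H\ge\sqrt{1+p^2/\rho^2}+p/\rho$ rewritten as $H^2-1\ge 2Hp/\rho$. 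This proves $g_{\vec U}>0$ on $\mathbb B_1({\bf 0})$, i.e., $\vec U\in\mathcal G_2$.

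For the reverse inclusion $\mathcal G_2\subseteq\mathcal G=\mathcal G_1$, let $\vec U\in\mathcal G_2$; then $D>0$. Taking $\vec B^*={\bf 0}$ turns the defining inequality into $E-\vec m\cdot\vec v^*-D\sqrt{1-|\vec v^*|^2}>0$ for all $\vec v^*\in\mathbb B_1({\bf 0})$, and minimizing this convex function (critical point $\vec v^*=\vec m/\sqrt{D^2+|\vec m|^2}$, an interior point) yields $q(\vec U)=E-\sqrt{D^2+|\vec m|^2}>0$. It remains to deduce $\Psi(\vec U)>0$, which I would prove by contraposition: assuming $\Psi(\vec U)\le0$ together with $D>0$ and $q(\vec U)>0$, exhibit a $\vec v^*\in\mathbb B_1({\bf 0})$ with $g_{\vec U}(\vec v^*)\le0$. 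Since $g_{\vec U}$ is smooth and convex, its infimum over $\mathbb B_1({\bf 0})$ is either attained at the unique interior solution of the stationarity equation $\vec m=(DW^*+|\vec B|^2)\vec v^*-(\vec v^*\cdot\vec B)\vec B$ (structurally the conservative-to-primitive relation \eqref{getprimformU} with $DW^*$ in place of $\hat\theta$) or approached as $|\vec v^*|\to1$; in either case that infimum is an explicit algebraic function of $(D,\vec m,\vec B,E)$, and a (lengthy) computation identifies its sign with that of $\Psi(\vec U)$ when $D>0$ and $q(\vec U)>0$. The conceptual reason this works is the identity $\vec U\cdot{\bm \xi}^*+p_m^*=\rho e$, obtained by taking $\vec v^*=\vec v(\vec U)$ and $\vec B^*=\vec B$ for an admissible state: at the optimal test vectors the linear functional measures precisely $\rho e$. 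I expect this last step --- carrying out the constrained optimization and matching the resulting extremum with $\Psi(\vec U)$, with no closed form for $\vec v(\vec U)$ at hand --- to be the main obstacle, whereas the forward inclusion becomes routine once the sum-of-nonnegative-terms form of $g_{\vec U}$ has been discovered.
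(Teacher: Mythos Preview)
The paper does not prove this lemma; it is quoted from \cite{WuTangM3AS,WuTangZAMP}. So there is no in-paper proof to compare against, and your proposal must be judged on its own merits.

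Your reduction over $\vec B^*$ is correct: completing the square gives
\[
\vec U\cdot{\bm \xi}^*+p_m^*=g_{\vec U}(\vec v^*)+\tfrac12\big[(1-|\vec v^*|^2)|\vec B^*-\vec B|^2+(\vec v^*\cdot(\vec B^*-\vec B))^2\big],
\]
so the whole question collapses to $g_{\vec U}>0$ on $\mathbb B_1({\bf 0})$. Your proof of $\mathcal G\subseteq\mathcal G_2$ is clean and complete: the identity for $g_{\vec U}(\vec v^*)$ in primitive variables checks out, the Cauchy--Schwarz step and the minimization of $HW(1-\vec v\cdot\vec v^*)-\sqrt{1-|\vec v^*|^2}$ are correct, and the final chain $\rho W(HW-\sqrt{H^2W^2-H^2+1})>\rho(H^2-1)/(2H)\ge p$ is exactly the EOS inequality in \eqref{eq:EOScond}. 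For $\mathcal G_2\subseteq\mathcal G_1$, your derivations of $D>0$ and $q(\vec U)>0$ (by taking $\vec B^*={\bf 0}$ and minimizing the resulting convex function at $\vec v^*=\vec m/\sqrt{D^2+|\vec m|^2}$) are also fine.

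The genuine gap is the final step: you assert that ``a (lengthy) computation identifies'' the sign of $\inf_{\vec v^*} g_{\vec U}$ with that of $\Psi(\vec U)$, but you do not carry it out, and nothing you have written forces this identification. The stationarity system $\vec m=(DW^*+|\vec B|^2)\vec v^*-(\vec v^*\cdot\vec B)\vec B$ is a coupled nonlinear system (through $W^*$) in $\vec v^*$, and eliminating $\vec v^*$ to obtain an expression for $g_{\vec U}$ at the critical point in terms of $(D,\vec m,\vec B,E)$ alone, and then matching that expression with the specific algebraic form of $\Psi$ in Lemma~\ref{theo:RMHD:CYconditionFINAL2} (which involves $\Phi(\vec U)=\sqrt{(|\vec B|^2-E)^2+3(E^2-D^2-|\vec m|^2)}$ and a further square root), is precisely the hard part of the equivalence. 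The heuristic ``$\vec U\cdot{\bm \xi}^*+p_m^*=\rho e$ at $\vec v^*=\vec v(\vec U)$'' is suggestive but presupposes that admissible primitives exist, which is exactly what is at stake in this direction. Without this algebra --- or an alternative argument bypassing $\Psi$ entirely --- the reverse inclusion remains unproved.
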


\begin{remark}
	Note that all the constraints in the above two equivalent forms are {\bf \em explicit} with respect to 
	$\bf U$. This is a very helpful feature.  
	The first equivalent form ${\mathcal G}_1$ is particularly useful for checking the admissibility of a given state $\vec U$ and constructing 
	the limiter for developing PCP high-order accurate RMHD schemes. 
	Moreover, the two constraints in the second equivalent form ${\mathcal G}_2$, 
	are both {\bf \em linear} with respect to $\bf U$, although two (additional) auxiliary variables ${\bf B}^*$ and ${\bf v}^*$ are introduced. 
	Such linearity makes ${\mathcal G}_2$ quite suitable for 
	analytically
	verifying the PCP property of RMHD schemes. 
	It will provide a novel quasi-linearization approach to handle nonlinear physical constraints and   
	play an important role in our PCP analysis. 
\end{remark}

It is also proven in \cite{WuTangM3AS} that the admissible state set is a convex set.

\begin{lemma}\label{theo:RMHD:convex}
	The admissible state set ${\mathcal G}_1$ is convex.
\end{lemma}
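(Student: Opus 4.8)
The plan is to sidestep the complicated nonlinear function $\Psi$ in the definition of $\mathcal{G}_1$ and instead to work with the second equivalent form $\mathcal{G}_2$ from Lemma~\ref{theo:RMHD:CYcondition:VecN}, whose defining inequalities are \emph{linear} in $\vec U$. Since Lemma~\ref{theo:RMHD:CYcondition:VecN} already supplies the set identity $\mathcal{G}_1 = \mathcal{G}_2$, it suffices to prove that $\mathcal{G}_2$ is convex.

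The key observation is that $\mathcal{G}_2$ is an intersection of (infinitely many) open half-spaces. Fix any $\vec v^* \in \mathbb{B}_1({\bf 0})$ and $\vec B^* \in \mathbb{R}^3$. The vector ${\bm \xi}^*$ in \eqref{eq:RMHD:vecns} and the scalar $p_m^*$ in \eqref{eq:RMHD:vecns2} are determined by the pair $(\vec v^*, \vec B^*)$ alone and do not involve $\vec U$; hence $\vec U \mapsto \vec U \cdot {\bm \xi}^* + p_m^*$ is an affine function of $\vec U$, so that
\[
\mathcal{H}_{\vec v^*,\vec B^*} := \big\{ \vec U \in \mathbb{R}^8 : \vec U \cdot {\bm \xi}^* + p_m^* > 0 \big\}
\]
is an open half-space, in particular convex. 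The constraint $D>0$ likewise cuts out a convex open half-space $\mathcal{H}_0 := \{\vec U \in \mathbb{R}^8 : D > 0\}$. Consequently,
\[
\mathcal{G}_2 \;=\; \mathcal{H}_0 \,\cap\, \bigcap_{\vec v^* \in \mathbb{B}_1({\bf 0}),\, \vec B^* \in \mathbb{R}^3} \mathcal{H}_{\vec v^*,\vec B^*}
\]
is an intersection of convex sets, and therefore convex. Combined with $\mathcal{G}_1 = \mathcal{G}_2$, this proves the lemma.

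I expect the only genuine difficulty here to be conceptual rather than computational: one must recognize that convexity is transparent in the $\mathcal{G}_2$ representation but essentially intractable to establish directly from $\mathcal{G}_1$. A head-on attempt would require showing that $q(\vec U) = E - \sqrt{D^2 + |\vec m|^2}$ is concave (this part is easy, since $\sqrt{D^2+|\vec m|^2}$ is convex) \emph{and} that
\[
\Psi(\vec U) = \big( \Phi(\vec U) - 2(|\vec B|^2 - E) \big)\sqrt{\Phi(\vec U) + |\vec B|^2 - E} - \sqrt{\tfrac{27}{2}\big( D^2|\vec B|^2 + (\vec m\cdot\vec B)^2 \big)}
\]
is (quasi-)concave, which is far from obvious given the nested square roots of quadratic forms. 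Once the route through $\mathcal{G}_2$ is taken, the proof reduces to the elementary fact that an intersection of half-spaces is convex, together with the linearity already highlighted in the Remark following Lemma~\ref{theo:RMHD:CYcondition:VecN}.
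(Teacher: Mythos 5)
Your proof is correct: the paper itself gives no argument for this lemma (it simply cites \cite{WuTangM3AS}), and your route through the equivalence ${\mathcal G}_1={\mathcal G}_2$ of Lemma~\ref{theo:RMHD:CYcondition:VecN} --- writing ${\mathcal G}_2$ as an intersection of open half-spaces, since ${\bm \xi}^*$ and $p_m^*$ depend only on $({\bf v}^*,{\bf B}^*)$ so each constraint is affine in $\vec U$ --- is precisely the mechanism the cited reference uses and that the paper's own remark about the linearity of ${\mathcal G}_2$ anticipates. Nothing is missing; the argument is complete given Lemma~\ref{theo:RMHD:CYcondition:VecN}.
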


\subsection{Technical estimates}
In order to handle the effect of the source term in the symmetrizable RMHD system \eqref{ModRMHD} on the PCP property of numerical schemes, we derive the following inequality \eqref{eq:widelyusedWU_EQ}, whose discovery is highly nontrivial.

\begin{lemma}\label{lem:est_su}
	For any ${\bf U} \in {\mathcal G}$, any ${\bf B}^* \in {\mathbb R}^3$ and any ${\bf v}^* \in \mathbb B_1({\bf 0})$, it holds 
	\begin{equation}\label{eq:widelyusedWU_EQ}
	\left|  {\bf S}({\bf U}) \cdot {\bm \xi}^* + {\bf v}^* \cdot  {\bf B}^* \right|  \le \frac{1}{\sqrt{\rho H}} \left( {\bf U} \cdot {\bm \xi}^* + p_m^* \right),
	\end{equation}
	where ${\bm \xi}^*$ and $p_{m}^*$ are defined in \eqref{eq:RMHD:vecns} and \eqref{eq:RMHD:vecns2}, respectively. 
\end{lemma}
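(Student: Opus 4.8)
The plan is to rewrite both sides of \eqref{eq:widelyusedWU_EQ} in terms of the primitive variables of ${\bf U}$, use the equivalent characterization of $\mathcal G$ in Lemma~\ref{theo:RMHD:CYcondition:VecN} to dispose of the sign of the right-hand side, and then reduce everything to a scalar inequality that can be closed with the EOS condition~\eqref{eq:EOScond}. First I would substitute \eqref{eq:source}, \eqref{eq:RMHD:vecns}, and the expressions of $D,{\bf m},E$ in terms of $\{\rho,p,{\bf v},{\bf B}\}$, and carry out the dot products; a direct computation collapses the left-hand side to
$$
{\bf S}({\bf U})\cdot{\bm\xi}^* + {\bf v}^*\cdot{\bf B}^* = ({\bf v}\cdot{\bf B}+{\bf v}^*\cdot{\bf B}^*)(1-{\bf v}\cdot{\bf v}^*) - (1-|{\bf v}|^2)({\bf v}^*\cdot{\bf B}) - (1-|{\bf v}^*|^2)({\bf v}\cdot{\bf B}^*) =: f ,
$$
which is symmetric under $({\bf v},{\bf B})\leftrightarrow({\bf v}^*,{\bf B}^*)$. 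Since ${\bf U}\in\mathcal G$, Lemma~\ref{theo:RMHD:CYcondition:VecN} gives $g:={\bf U}\cdot{\bm\xi}^*+p_m^*>0$, and $\rho H>0$ by \eqref{eq:EOScond}; hence \eqref{eq:widelyusedWU_EQ} is equivalent to the two inequalities $g\pm\sqrt{\rho H}\,f\ge0$, i.e.\ to $\sqrt{\rho H}\,|f|\le g$.

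For the bound on $|f|$ I would introduce the relative Lorentz factor $\Gamma:=WW^*(1-{\bf v}\cdot{\bf v}^*)$, where $W=(1-|{\bf v}|^2)^{-1/2}$ and $W^*=(1-|{\bf v}^*|^2)^{-1/2}$; one has $\Gamma\ge1$, with equality iff ${\bf v}={\bf v}^*$. Recognizing that $2p_m$ and $2p_m^*$ are the squared magnetic field strengths of the two states measured in their local rest frames, a Cauchy--Schwarz estimate (most transparent after boosting to the rest frame of the first state, but also obtainable by direct algebra) yields
$$
|f| \le \big(\sqrt{2p_m}+\sqrt{2p_m^*}\big)\sqrt{(1-{\bf v}\cdot{\bf v}^*)^2-(1-|{\bf v}|^2)(1-|{\bf v}^*|^2)}\, .
$$

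Next I would split $g=g_{\rm fl}+g_{\rm em}$, where $g_{\rm fl}=\rho H W^2(1-{\bf v}\cdot{\bf v}^*)-\rho W\sqrt{1-|{\bf v}^*|^2}-p$ is the part independent of $({\bf B},{\bf B}^*)$ and $g_{\rm em}$ is the remaining quadratic form in $({\bf B},{\bf B}^*)$. Applying Lemma~\ref{theo:RMHD:CYcondition:VecN} to the non-magnetic state $(\rho,p,{\bf v},{\bf 0})$ shows $g_{\rm fl}>0$, and $g_{\rm em}$ must be positive semidefinite (otherwise rescaling $({\bf B},{\bf B}^*)$ would drive $g$ negative, contradicting $g>0$). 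By the arithmetic--geometric mean inequality $g=g_{\rm fl}+g_{\rm em}\ge2\sqrt{g_{\rm fl}\,g_{\rm em}}$, so it suffices to prove the sharpened estimate $\rho H\,f^2\le4\,g_{\rm fl}\,g_{\rm em}$. Here I would bound $g_{\rm fl}$ from below using the second line of \eqref{eq:EOScond}, i.e.\ $\rho H\ge\sqrt{\rho^2+p^2}+p$, together with $\Gamma\ge1$ (equivalently $W^2(1-{\bf v}\cdot{\bf v}^*)\ge W/W^*$), and bound $g_{\rm em}$ from below in terms of $p_m$, $p_m^*$ and $\Gamma$; inserting the bound on $|f|$ from the previous step and using a suitably weighted Young / AM--GM inequality to reconcile $\sqrt{2p_m}+\sqrt{2p_m^*}$ with $p_m+p_m^*$ then closes the estimate.

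The hard part is this last step: controlling $g$ from below with enough precision. Three features make it delicate: there is no explicit formula for $\rho,p,H$ (hence $\rho H$) in terms of ${\bf U}$, so the whole argument must stay at the level of primitive variables; the fluid and magnetic contributions are intertwined inside $E$ and ${\bf m}$, so they must be peeled apart with exactly the right weights --- essentially one must exhibit a sum-of-squares-type certificate --- or the lower bound is too weak (the inequality is in fact sharp, with equality attained for suitable data, as one can check on the subcase ${\bf v}={\bf 0}$, ${\bf B}^*={\bf 0}$); and the non-polynomial terms $\sqrt{1-|{\bf v}^*|^2}$ and $\sqrt{\rho^2+p^2}$ preclude a purely polynomial reduction, which is why the estimate is organized around $\Gamma$ rather than around $({\bf v},{\bf v}^*)$ directly. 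A helpful simplification is that $\mathcal G$ imposes no constraint linking ${\bf B}$ to $(\rho,p,{\bf v})$, so one may rescale $({\bf B},{\bf B}^*)\mapsto(\lambda{\bf B},\lambda{\bf B}^*)$: the inequality $\sqrt{\rho H}\,|f|\le g$ then becomes nonnegativity of a quadratic in $\lambda$, which reduces precisely to the discriminant inequality $\rho H\,f^2\le4\,g_{\rm fl}\,g_{\rm em}$ invoked above together with $g_{\rm fl}>0$.
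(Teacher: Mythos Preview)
Your reduction is sound and overlaps with the paper's proof at the structural level: both split $g=\mathbf U\cdot\boldsymbol\xi^*+p_m^*$ into a fluid piece $g_{\rm fl}=\Pi_1^{(1)}$ and an electromagnetic piece $g_{\rm em}=\Pi_1^{(2)}$, and both use the EOS condition in \eqref{eq:EOScond} to bound $g_{\rm fl}\ge\tfrac12\rho H\,W^2|{\bf v}-{\bf v}^*|^2$. Your scaling observation $({\bf B},{\bf B}^*)\mapsto(\lambda{\bf B},\lambda{\bf B}^*)$, reducing the claim to the discriminant inequality $\rho H\,f^2\le 4\,g_{\rm fl}\,g_{\rm em}$, is correct and elegant; after inserting the lower bound on $g_{\rm fl}$ it becomes the $6$-variable quadratic-form inequality
\[
f^2\;\le\;2\,W^2|{\bf v}-{\bf v}^*|^2\,g_{\rm em}
\]
in $({\bf B},{\bf B}^*)$. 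This is \emph{exactly} what the paper establishes, but it does so by a different device: it packages $\tfrac12\rho H\,W^2|{\bf v}-{\bf v}^*|^2+g_{\rm em}$ and $\sqrt{\rho H}\,f$ as quadratic forms in the $7$-vector ${\bf Z}=({\bf B}^*,{\bf B},\sqrt{\rho H})^\top$ with explicit matrices ${\bf A}_3,{\bf A}_2$, and proves ${\bf A}_3\pm{\bf A}_2\succeq 0$ by an explicit congruence and a leading-minor (Schur-complement) computation. Your scaling argument and the paper's introduction of the seventh coordinate $\sqrt{\rho H}$ are really two ways of saying the same thing.

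Where your proposal has a genuine gap is the closing step. You plan to bound $|f|$ above in terms of the \emph{scalars} $p_m,p_m^*,\delta$ and to bound $g_{\rm em}$ below in terms of $p_m,p_m^*,\Gamma$, and then match via weighted AM--GM. But both $f$ and $g_{\rm em}$ depend on the full directions of ${\bf B},{\bf B}^*$ relative to ${\bf v},{\bf v}^*$, and this directional information does not survive the passage to $p_m,p_m^*,\Gamma$. For instance, with ${\bf v}={\bf v}^*={\bf 0}$ and ${\bf B}={\bf B}^*\neq{\bf 0}$ one has $p_m=p_m^*>0$ yet $g_{\rm em}=0$; and already in the simple case ${\bf v}={\bf 0}$, ${\bf B}^*={\bf 0}$ your Cauchy--Schwarz bound gives $|f|^2\le 2p_m\,\delta$ with \emph{equality}, forcing any companion lower bound on $g_{\rm em}$ to be sharp as well. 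So the two-step ``scalarize, then AM--GM'' route leaves no slack and does not visibly close. What is actually required---and what the paper provides---is a direct sum-of-squares (equivalently, positive-semidefiniteness) certificate for $2W^2|{\bf v}-{\bf v}^*|^2\,g_{\rm em}-f^2$ as a quadratic form in $({\bf B},{\bf B}^*)$; your discriminant reduction brings you right to this doorstep, but you still need to walk through it rather than detour through $p_m,p_m^*$.
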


\begin{proof}
	We observe that  
	\begin{equation*} 
{\bf S}({\bf U}) \cdot {\bm \xi}^*  + {\bf v}^* \cdot  {\bf B}^* = ( {\bf v} - {\bf v}^* ) \cdot \Big( (1-|{\bf v}|^2) {\bf B} + ({\bf v}\cdot {\bf B} ) {\bf v} - (1-|{\bf v}^*|^2) {\bf B}^* - ({\bf v}^*\cdot {\bf B}^* ) {\bf v}^*  \Big) . 
\end{equation*}
Let $\Pi_1 :=  {\bf U} \cdot {\bm \xi}^* + p_m^*$ and 	
$$
\Pi_2 := \sqrt{\rho H} ( {\bf v} - {\bf v}^* ) \cdot \Big( (1-|{\bf v}|^2) {\bf B} + ({\bf v}\cdot {\bf B} ) {\bf v} - (1-|{\bf v}^*|^2) {\bf B}^* - ({\bf v}^*\cdot {\bf B}^* ) {\bf v}^*  \Big).
$$ 
Then, we need to prove  
\begin{equation}\label{eq:2wk}
 \Pi_1 \ge |\Pi_2|.
\end{equation}
We reformulate $\Pi_1$ and split it into two parts as follows: 
	\begin{align*}
	\Pi_1 
	& = \rho H W^2 (1-{\bf v}\cdot {\bf v}^*) - p
	- \rho W \sqrt{1- |{\bf v}^*|^2 } 
	\\
	& \quad + \Big( |{\bf B}|^2 {\bf v} - ({\bf v}\cdot {\bf B}) {\bf B}  \Big) \cdot ( - {\bf v}^* ) 
	+ \Big( (1-|{\bf v}^*|^2) {\bf B}^* + ({\bf v}^*\cdot {\bf B}^*) {\bf v}^*  \Big) \cdot  ( - {\bf B} )
	\\
	& \quad  + \frac{  (1+|{\bf v}|^2) |{\bf B}|^2 - ( {\bf v}\cdot {\bf B} )^2  }2
	+ \frac{ (1 - |{\bf v}^*|^2) |{\bf B}^*|^2 + ( {\bf v}^* \cdot {\bf B}^* )^2 } 2
	\\
	& = \left[ \rho H W^2 (1-{\bf v}\cdot {\bf v}^*) - p
	- \rho W \sqrt{1- |{\bf v}^*|^2 }  \right]
	\\
	& \quad + \left[ \frac{ (1-|{\bf v}^*|^2) | {\bf B} - {\bf B}^* |^2 } 2 
	+ \frac{ |{\bf v} - {\bf v}^*|^2 | {\bf B} |^2 } 2 
	- \frac{  ( ( {\bf v} - {\bf v}^* )\cdot  {\bf B} )^2 } 2 
	+ \frac{ ( {\bf v}^* \cdot ( {\bf B} - {\bf B}^* ) )^2 } 2 \right]
	\\
	& =: \Pi_1^{(1)} + \Pi_{1}^{(2)}.
	\end{align*} 
According to the second condition in \eqref{eq:EOScond}, 
the first part $\Pi_1^{(1)}$ satisfies 
	\begin{align*}
	\frac{	\Pi_1^{(1)} }{\rho H} & =   W^2 (1-{\bf v}\cdot {\bf v}^*) - \frac{p}{\rho H}
	- \frac{1}{H}  W \sqrt{1- |{\bf v}^*|^2 }
	 \ge  W^2 (1-{\bf v}\cdot {\bf v}^*) - \frac{ \frac{H^2-1}{ 2H} \rho }{\rho H}
	- \frac{1}{H}  W \sqrt{1- |{\bf v}^*|^2 }
	\\
	& =  W^2 (1-{\bf v}\cdot {\bf v}^*)  - \frac12 
	+ \frac{1}2 \left( \frac{1}{H} -  W \sqrt{1- |{\bf v}^*|^2 } \right)^2 - \frac{ W^2 ( 1-|{\bf v}^*|^2 ) } 2
	\\
	& \ge W^2 (1-{\bf v}\cdot {\bf v}^*)  - \frac12 
	- \frac{ W^2 ( 1-|{\bf v}^*|^2 ) } 2 = \frac12 W^2 | {\bf v} - {\bf v}^* |^2. 
	\end{align*}
	It follows that 
	$$
	\Pi_1 \ge  \frac12 \rho H W^2 | {\bf v} - {\bf v}^* |^2 + \Pi_1^{(2)} := \Pi_3.
	$$
	Therefore, in order to prove \eqref{eq:widelyusedWU_EQ} or \eqref{eq:2wk}, it suffices to show 
	$$
	\Pi_3 \ge | \Pi_2 |.
	$$
	Let us introduce the vector ${\bf Z} =  ( {\bf B}^*, {\bf B}, \sqrt{\rho H} )^\top \in \mathbb R^7 $. We observe that both $\Pi_3$ and $\Pi_2$ can be formulated into 
	quadratic forms in the variables ${\bf Z}$.  
	{\bf \em This highly nontrivial observation is a key of our proof.}  
	Specifically, we discover that 
	$$
	\Pi_3 = \frac12 {\bf Z}^\top {\bf A}_3 {\bf Z}, \qquad \Pi_2 = \frac12 {\bf Z}^\top {\bf A}_2 {\bf Z}
	$$
	with 
	$$
	{\bf A}_3 = \begin{pmatrix}
	{\bf G} & -{\bf G} & {\bf 0}^\top\\
	-{\bf G} & {\bf H} & {\bf 0}^\top\\
	{\bf 0} &  {\bf 0} &    W^2 | {\bf v} - {\bf v}^* |^2
	\end{pmatrix},
	\qquad 
	{\bf A}_2 = \begin{pmatrix}
	{\bf O} & {\bf O} &  {\bf b}_1^\top \\
	{\bf O} & {\bf O} &  {\bf b}_2^\top  \\
	{\bf b}_1 &  {\bf b}_2 &  0
	\end{pmatrix},
	$$
	where ${\bf 0}=(0,0,0)$, ${\bf O}$ is $3\times 3$ zero matrix, and (note that ${\bf v}$ and ${\bf v}^*$ are row vectors)
	\begin{align*}
	&	{\bf G} = ( 1-|{\bf v}^*|^2 ) {\bf I}_3 + ( {\bf v}^* )^\top  {\bf v}^* ,
	\\
	&	{\bf H} = \big(1 + | {\bf v} - {\bf v}^* |^2  - |{\bf v}^*|^2 \big) {\bf I}_3 + ({\bf v}^*)^\top {\bf v}^* -  ( {\bf v} - {\bf v}^* )^\top ( {\bf v} - {\bf v}^* ),
	\\
	&	{\bf b}_1 =   (1-|{\bf v}^*|^2)({\bf v}^*-{\bf v}) + ( |{\bf v}^*|^2-{\bf v} \cdot {\bf v}^* ) {\bf v}^*,
	\\
	&	{\bf b}_2  =   (1-|{\bf v}|^2)({\bf v}-{\bf v}^*) + ( |{\bf v}|^2-{\bf v} \cdot {\bf v}^* ) {\bf v}. 
	\end{align*}
	Then, it suffices to show that both $ {\bf A}_3 + {\bf A}_2  $ and $ {\bf A}_3 - {\bf A}_2  $ are positive semi-definite. 
	
	Note that 
	${\bf G}$ is symmetric, and its eigenvalues are $\{ 1-|{\bf v}^*|^2, 1-|{\bf v}^*|^2, 1 \}$ and all positive, 
	implying the positive definiteness of $\bf G$. 
	Define a nonsingular matrix
	$$
	{\bf P}_1 = \begin{pmatrix}
	{\bf I}_3 & {\bf O} &  {\bf 0}^\top  \\
	{\bf I}_3 & {\bf I}_3 &  {\bf 0}^\top  \\
	- {\bf b}_1 {\bf G}^{-1} &  {\bf 0} &  1
	\end{pmatrix},
	$$
	where $ - {\bf b}_1 {\bf G}^{-1} = {\bf v} - {\bf v}^* $. Then 
\begin{equation}\label{eq:WKLproof3}
	{\bf P}_1 ( {\bf A}_3 + {\bf A}_2 ) {\bf P}_1^\top = 
	\begin{pmatrix}
	{\bf G} & {\bf O} & {\bf 0}^\top  \\
	{\bf O} & {\bf H} - {\bf G}& {\bf b}^\top_1 + {\bf b}_2^\top \\
	{\bf 0} &  {\bf b}_1+ {\bf b}_2   &    W^2 | {\bf v} - {\bf v}^* |^2 - {\bf b}_1 {\bf G}^{-1} {\bf b}_1^\top
	\end{pmatrix},
\end{equation}
	where 
	$$
	{\bf b}_1 + {\bf b}_2 = ( | {\bf v}^* |^2 - {\bf v} \cdot {\bf v}^* ) {\bf v} 
	+ ( | {\bf v} |^2 - {\bf v} \cdot {\bf v}^* ) {\bf v}^*,
	$$
	and the matrix ${\bf H}-{\bf G}$ is symmetric and given by 
	$$
	{\bf H}-{\bf G} = | {\bf v} - {\bf v}^* |^2 {\bf I}_3 - ( {\bf v} - {\bf v}^* )^\top ( {\bf v} - {\bf v}^* ).
	$$
	The eigenvalues of ${\bf H}-{\bf G}$ are $\{ 0, | {\bf v} - {\bf v}^* |^2, | {\bf v} - {\bf v}^* |^2 \}$, which are all nonnegative, implying that ${\bf H}-{\bf G}$ is positive semi-definite. 
	
	Now, we would like to show that ${\bf P}_1 ( {\bf A}_3 + {\bf A}_2 ) {\bf P}_1^\top$ is positive semi-definite. 
	Let us first consider two trivial cases: 
	\begin{itemize}
		\item If ${\bf v} = {\bf v}^*$, then ${\bf P}_1 ( {\bf A}_3 + {\bf A}_2 ) {\bf P}_1^\top = {\bf O}$, which is positive semi-definite.
		\item If ${\bf v} = {\bf 0}$, then ${\bf b}_1 = {\bf v}^*= - {\bf b}_2$ and $W^2 | {\bf v} - {\bf v}^* |^2 - {\bf b}_1 {\bf G}^{-1} {\bf b}_1^\top = 0$. In this case, ${\bf P}_1 ( {\bf A}_3 + {\bf A}_2 ) {\bf P}_1^\top 
		= {\rm diag}\{ {\bf G}, {\bf H}-{\bf G}, 0 \}$, which is positive semi-definite.
	\end{itemize}
	 In the following, we shall focus on the nontrivial case that 
	${\bf v} \neq {\bf v}^*$ and ${\bf v} \neq {\bf 0}$. 
	For any $\varepsilon > 0$, we define 
	$$
	{\bf Q}_\varepsilon = 	\begin{pmatrix}
	& {\bf H} - {\bf G} + \varepsilon {\bf I}_3 & {\bf b}_1^\top+ {\bf b}_2^\top \\
	&  {\bf b}_1 + {\bf b}_2  &    W^2 | {\bf v} - {\bf v}^* |^2 - {\bf b}_1 {\bf G}^{-1} {\bf b}_1^\top
	\end{pmatrix}.
	$$
	Some algebraic manipulations yield that 
	$$
	\det( {\bf Q}_\varepsilon ) = \frac{ \varepsilon }{1- |{\bf v}|^2 } ( \varepsilon + | {\bf v} - {\bf v}^* |^2 ) \Big( \varepsilon \Pi_4 +  |{\bf v}|^2 |{\bf v}-{\bf v}^*|^4  \Big),
	$$
	where 
	$$
	\Pi_4 := ( 1-|{\bf v}|^2 ) \Big(  ( v_1 v_2^* - v_2 v_1^* )^2 + 
	(v_2 v_3^* - v_3 v_2^*)^2 + 
	( v_3 v_1^* - v_1 v_3^*)^2  \Big) +  |{\bf v}|^2  |{\bf v}-{\bf v}^*|^2.
	$$
	It is evident that $\Pi_4 \ge |{\bf v}|^2  |{\bf v}-{\bf v}^*|^2$. 
	For any $\varepsilon > 0$, the matrix ${\bf H} - {\bf G} + \varepsilon {\bf I}_3$ is positive definite, and when 
	${\bf v} \neq {\bf v}^*$ and ${\bf v} \neq {\bf 0}$, it holds 
	$$
	\det( {\bf Q}_\varepsilon ) \ge  \frac{ \varepsilon }{1- |{\bf v}|^2 } ( \varepsilon + | {\bf v} - {\bf v}^* |^2 ) |{\bf v}-{\bf v}^*|^2 |{\bf v}|^2  \Big( \varepsilon    +   |{\bf v}-{\bf v}^*|^2  \Big) > 0.
	$$
	This implies that the leading principal minors of ${\bf Q}_\varepsilon$ are all positive,  and thus ${\bf Q}_\varepsilon$ is positive definite for any $\varepsilon > 0$, ${\bf v} \neq {\bf v}^*$ and ${\bf v} \neq {\bf 0}$. Taking the limit $\varepsilon \to 0$, we obtain that ${\bf Q}_0$ is 
	positive semi-definite, which further yields that ${\bf P}_1 ( {\bf A}_3 + {\bf A}_2 ) {\bf P}_1^\top 
	= {\rm diag} \{ {\bf G}, {\bf Q}_0 \}$ is positive semi-definite, 
	for the nontrivial case (${\bf v} \neq {\bf v}^*$ and ${\bf v} \neq {\bf 0}$). 
	In conclusion, for all the cases, ${\bf P}_1 ( {\bf A}_3 + {\bf A}_2 ) {\bf P}_1^\top$ is positive semi-definite. 
	
	Because ${\bf A}_3 + {\bf A}_2$ and  ${\bf P}_1 ( {\bf A}_3 + {\bf A}_2 ) {\bf P}_1^\top$ are congruent, 
	${\bf A}_3 + {\bf A}_2$ is positive semi-definite. 
	Similar arguments imply that 
	${\bf A}_3 - {\bf A}_2$ is also positive semi-definite. Hence 
	$$
	\Pi_3 \pm \Pi_2 = \frac12 {\bf Z}^\top  ( {\bf A}_3 \pm {\bf A}_2 ) {\bf Z} \ge 0,
	$$ 
	which yields $\Pi_1 \ge \Pi_3 \ge |\Pi_2|$. The proof is complete. 
\end{proof}

We also need the following technical inequality \eqref{eq:RMHD:LLFsplit}, which was constructed in 
\cite{WuTangM3AS}, to handle the effect of flux in numerical PCP analysis. 

\begin{lemma}\label{theo:RMHD:LLFsplit}
	If $\vec U \in {\mathcal G}$, then
	for any $\theta \in [-1,1]$, any ${\vec B}^* \in \mathbb R^3 $ and any ${\bf v}^* \in \mathbb B_1({\bf 0}) $, 
	it holds
	\begin{equation}\label{eq:RMHD:LLFsplit}
	\big( \vec U + \theta \vec F_i(\vec U) \big) \cdot {\bm \xi}^* +  p_{m}^* + \theta \big( v_{i}^* p_{m}^* - B_i (\vec v^* \cdot \vec B^*)\big)\ge 0,
	\end{equation}
	where $i\in\{1,2,\cdots,d\}$, and ${\bm \xi}^*$ and $p_{m}^*$ are defined in \eqref{eq:RMHD:vecns} and \eqref{eq:RMHD:vecns2}, respectively.
\end{lemma}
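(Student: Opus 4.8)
The plan is to recast \eqref{eq:RMHD:LLFsplit} as a single ``sub-characteristic'' bound and then prove that bound by a quadratic-form argument in the spirit of the proof of Lemma~\ref{lem:est_su}. Since the left-hand side of \eqref{eq:RMHD:LLFsplit} is affine in $\theta$, it is nonnegative for all $\theta\in[-1,1]$ if and only if it is nonnegative at the endpoints $\theta=\pm1$. Writing $L(\vec U):=\vec U\cdot{\bm \xi}^*+p_m^*$ for the linear functional in Lemma~\ref{theo:RMHD:CYcondition:VecN} and
$$
F_i^L(\vec U):=\vec F_i(\vec U)\cdot{\bm \xi}^*+v_i^*p_m^*-B_i(\vec v^*\cdot\vec B^*),
$$
the claim is thus equivalent to the bound $|F_i^L(\vec U)|\le L(\vec U)$ for all $\vec U\in{\mathcal G}$, $\vec B^*\in\mathbb R^3$ and $\vec v^*\in\mathbb B_1({\bf 0})$: indeed \eqref{eq:RMHD:LLFsplit} then follows from $L(\vec U)+\theta F_i^L(\vec U)\ge L(\vec U)-|F_i^L(\vec U)|\ge 0$. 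Note that $L(\vec U)>0$ is exactly the content of Lemma~\ref{theo:RMHD:CYcondition:VecN}, so only the bound on $F_i^L$ remains to be proved.

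To make $F_i^L$ tractable I would first record the elementary identity $m_i-v_iE=v_ip_{tot}-(\vec v\cdot\vec B)B_i$ (immediate from the definitions of $\vec m$ and $E$), which gives the clean decomposition
$$
\vec F_i(\vec U)=v_i\vec U-B_i\vec S(\vec U)+p_{tot}\,\vec g_i,\qquad \vec g_i:=(0,\vec e_i,{\bf 0},v_i)^\top,
$$
with $\vec S$ the source vector \eqref{eq:source}. Since $\vec g_i\cdot{\bm \xi}^*=v_i-v_i^*$, this yields $F_i^L(\vec U)=v_iL(\vec U)+(v_i-v_i^*)(p+p_m-p_m^*)-B_i\big(\vec S(\vec U)\cdot{\bm \xi}^*+\vec v^*\cdot\vec B^*\big)$, and hence
$$
L(\vec U)\pm F_i^L(\vec U)=(1\pm v_i)\,L(\vec U)\pm(v_i-v_i^*)(p+p_m-p_m^*)\mp B_i\big(\vec S(\vec U)\cdot{\bm \xi}^*+\vec v^*\cdot\vec B^*\big).
$$
The prefactor $1\pm v_i$ is positive because $|v_i|\le|\vec v|<1$, and the last summand involves precisely the quantity bounded in Lemma~\ref{lem:est_su}.

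The crux is to prove $L(\vec U)\pm F_i^L(\vec U)\ge 0$, and I expect this to be the main obstacle. A bare triangle-inequality estimate via Lemma~\ref{lem:est_su} does not suffice: in a low-density, strongly magnetized state the effective coefficient $1\pm v_i-|B_i|/\sqrt{\rho H}$ can be negative while $(v_i-v_i^*)(p+p_m-p_m^*)$ carries the unfavorable sign, so one must exploit the cancellations between the magnetic contributions to $L$ and those hidden in $\vec S(\vec U)\cdot{\bm \xi}^*$. I would therefore imitate the quadratic-form strategy of the proof of Lemma~\ref{lem:est_su}: bound the thermal pressure $p$ via the second condition of the EOS assumption \eqref{eq:EOScond} (exactly as the term $-p$ is handled there), so that $L(\vec U)\pm F_i^L(\vec U)$ is bounded below by a quadratic form $\frac12\vec Z^\top\vec M_\pm\vec Z$ in $\vec Z=(\vec B^*,\vec B,\sqrt{\rho H})^\top\in\mathbb R^7$; it then remains to prove that the symmetric matrices $\vec M_+$ and $\vec M_-$ are positive semi-definite. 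I would establish this by a congruence transformation that block-diagonalizes the $(\vec B^*,\vec B)$-block, in the spirit of the matrix ${\bf P}_1$ in \eqref{eq:WKLproof3}, thereby reducing positive semi-definiteness to the nonnegativity of the resulting Schur complement, in which the subluminal bounds $|\vec v|<1$ and $|\vec v^*|<1$ are exactly what guarantees positivity, possibly after a vanishing-$\varepsilon$ regularization as in that proof. Selecting the right congruence — so that the asymmetry introduced by the flux direction $i$ and the thermal-pressure term both land in the Schur complement with a favorable sign — is the delicate point. (Alternatively, one may simply invoke \cite{WuTangM3AS}, where \eqref{eq:RMHD:LLFsplit} was first established.)
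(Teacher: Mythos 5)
The paper itself does not prove this lemma: it is imported verbatim from \cite{WuTangM3AS}, so your parenthetical fallback of citing that reference is exactly what the authors do. Judged as a self-contained argument, however, your proposal stops short at the decisive step. Your reductions are correct and genuinely useful: affinity in $\theta$ legitimately reduces the claim to the endpoints $\theta=\pm1$; the identity $m_i-v_iE=v_ip_{tot}-(\vec v\cdot\vec B)B_i$ and the decomposition $\vec F_i(\vec U)=v_i\vec U-B_i\vec S(\vec U)+p_{tot}\,\vec g_i$ check out component by component; and you correctly observe that combining Lemma~\ref{lem:est_su} with a bare triangle inequality fails, since $1\pm v_i-|B_i|/\sqrt{\rho H}$ can be negative for low-density, strongly magnetized states. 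But the inequality $L(\vec U)\pm F_i^L(\vec U)\ge0$ that remains \emph{is} the lemma at $\theta=\pm1$, and for it you offer only a plan: you assert that after invoking the EOS bound on $p$ the quantity dominates a quadratic form $\tfrac12\vec Z^\top\vec M_\pm\vec Z$ in $\vec Z=(\vec B^*,\vec B,\sqrt{\rho H})$ with $\vec M_\pm$ positive semi-definite, but you neither exhibit $\vec M_\pm$ nor verify semi-definiteness, and you yourself flag the choice of congruence as the delicate point.

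The gap is not cosmetic, because it is not evident that the reduction to such a quadratic form is even available here. Unlike in Lemma~\ref{lem:est_su}, your candidate lower bound contains the sign-indefinite term $\pm(v_i-v_i^*)(p+p_m-p_m^*)$, in which the thermal pressure $p$ enters linearly and with either sign (so the EOS inequality $p\le\frac{H^2-1}{2H}\rho$ only helps for one sign of $v_i-v_i^*$), and the prefactor $(1\pm v_i)$ couples the flux direction into every block of the would-be matrix. Absorbing all of this into a PSD quadratic form in those seven variables is precisely the technical substance of the result, which is what the lengthy argument of \cite{WuTangM3AS} supplies. As written, your proof is therefore incomplete: either cite \cite{WuTangM3AS} outright, as the paper does, or carry out the construction of $\vec M_\pm$ and the semi-definiteness verification explicitly.
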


For any vector 
${\bf n}=(n_1,\cdots,n_d) \in {\mathbb{R}}^d$, we define the inner products
\begin{equation} \label{eq:innerprod}
\langle {\bf n }, {\bf v} \rangle 
:= \sum_{k=1}^d n_k v_k,
\quad ~
\langle {\bf n }, {\bf B} \rangle 
:= \sum_{k=1}^d n_k B_k,
\quad ~ \langle {\bf n }, {\bf F} \rangle 
:= \sum_{k=1}^d n_k {\bf F}_k,
\end{equation}
which will be frequently used in this paper. Then we can generalize Lemma \ref{theo:RMHD:LLFsplit}.

\begin{lemma}\label{theo:RMHD:LLFsplitgen}
	If $\vec U \in {\mathcal G}$, then
	for any $\theta \in [-1,1]$, any ${\vec B}^* \in \mathbb R^3 $, any ${\bf v}^* \in \mathbb B_1({\bf 0}) $, and 
	any unit vector ${\bf n} \in {\mathbb{R}}^d$, 
	it holds
	\begin{equation}\label{eq:RMHD:LLFsplitgen}
	\big( \vec U + \theta \big\langle {\bf n}, \vec F(\vec U) \big \rangle \big) \cdot {\bm \xi}^* +  p_{m}^* + \theta \left( \langle {\bf n}, {\bf v}^*\rangle  p_{m}^* - \langle {\bf n}, {\bf B} \rangle (\vec v^* \cdot \vec B^*)\right)\ge 0.
	\end{equation}
\end{lemma}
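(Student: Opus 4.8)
The plan is to reduce Lemma~\ref{theo:RMHD:LLFsplitgen} to the already-established Lemma~\ref{theo:RMHD:LLFsplit} by a rotational change of variables, exploiting the rotational invariance of the RMHD system. First I would fix a unit vector ${\bf n} \in {\mathbb R}^d$ and embed it into ${\mathbb R}^3$ (padding with zeros if $d<3$); choose an orthogonal matrix $T \in {\mathbb R}^{3\times 3}$ with $\det T = 1$ whose first row is this embedded ${\bf n}$, so that $T$ rotates the $x_1$-axis onto the direction ${\bf n}$. The key algebraic fact I would invoke is the rotational covariance of the flux: for the RMHD equations one has the identity
\begin{equation*}
\big\langle {\bf n}, {\bf F}({\bf U}) \big\rangle = {\mathcal T}^{-1} {\bf F}_1\big( {\mathcal T} {\bf U} \big),
\end{equation*}
where ${\mathcal T}$ is the block-diagonal matrix ${\rm diag}\{1, T, T, 1\}$ acting on ${\bf U}=(D,{\bf m},{\bf B},E)^\top$ by rotating the vector components ${\bf m}$ and ${\bf B}$ while leaving the scalars $D$ and $E$ unchanged. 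This identity is standard and follows from the definition of ${\bf F}_i$ together with the fact that $D$, $p_{tot}$, $\rho H W^2$, etc., are rotational scalars.

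Next I would change variables by setting $\widetilde{\bf U} := {\mathcal T}{\bf U}$, $\widetilde{\bf B}^* := T {\bf B}^*$, and $\widetilde{\bf v}^* := T {\bf v}^*$. Since $T$ is orthogonal, $|\widetilde{\bf v}^*| = |{\bf v}^*| < 1$, so $\widetilde{\bf v}^* \in {\mathbb B}_1({\bf 0})$; and since ${\mathcal G}$ is rotationally invariant (the constraints $\rho>0$, $p>0$, $e>0$, $|{\bf v}|<1$ are all rotational scalars), $\widetilde{\bf U} \in {\mathcal G}$. The plan is then to check that, under this transformation, the auxiliary quantities transform consistently: writing $\widetilde{\bm\xi}^*$ and $\widetilde p_m^*$ for the quantities built from $\widetilde{\bf v}^*, \widetilde{\bf B}^*$ via \eqref{eq:RMHD:vecns}--\eqref{eq:RMHD:vecns2}, one has $\widetilde{\bm\xi}^* = {\mathcal T}^{-\top}{\bm\xi}^* = {\mathcal T}{\bm\xi}^*$ (using orthogonality of the blocks) and $\widetilde p_m^* = p_m^*$ (again a rotational scalar, since the bilinear and norm expressions in \eqref{eq:RMHD:vecns2} are $T$-invariant). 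Consequently ${\bf U}\cdot{\bm\xi}^* = \widetilde{\bf U}\cdot\widetilde{\bm\xi}^*$ and $\big\langle{\bf n},{\bf F}({\bf U})\big\rangle\cdot{\bm\xi}^* = \big({\mathcal T}^{-1}{\bf F}_1({\mathcal T}{\bf U})\big)\cdot{\bm\xi}^* = {\bf F}_1(\widetilde{\bf U})\cdot\widetilde{\bm\xi}^*$, and the $\theta$-coefficient term matches because $\langle{\bf n},{\bf v}^*\rangle = (T{\bf v}^*)_1 = \widetilde v_1^*$ and $\langle{\bf n},{\bf B}\rangle = (T{\bf B})_1 = \widetilde B_1$, while $\widetilde{\bf v}^*\cdot\widetilde{\bf B}^* = {\bf v}^*\cdot{\bf B}^*$. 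Therefore the left-hand side of \eqref{eq:RMHD:LLFsplitgen} equals
\begin{equation*}
\big( \widetilde{\bf U} + \theta {\bf F}_1(\widetilde{\bf U}) \big) \cdot \widetilde{\bm\xi}^* + \widetilde p_m^* + \theta\big( \widetilde v_1^* \widetilde p_m^* - \widetilde B_1 (\widetilde{\bf v}^*\cdot\widetilde{\bf B}^*) \big),
\end{equation*}
which is nonnegative by Lemma~\ref{theo:RMHD:LLFsplit} applied with $i=1$ to the admissible state $\widetilde{\bf U}$ and the admissible parameters $\widetilde{\bf v}^*, \widetilde{\bf B}^*$.

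The main obstacle I anticipate is not conceptual but bookkeeping: one must verify carefully that every ingredient—the flux rotation identity, the transformation rule ${\bm\xi}^* \mapsto {\mathcal T}{\bm\xi}^*$, the invariance of $p_m^*$, and the invariance of the admissible set ${\mathcal G}$—is indeed valid for the RMHD system with a general EOS satisfying \eqref{eq:EOScond}. The rotational invariance of ${\mathcal G}$ is immediate since all four defining inequalities involve only rotational scalars. The flux identity requires inspecting the magnetic-stress and Poynting-flux pieces of ${\bf F}_i$; the delicate term is the anisotropic part $-B_i(W^{-2}{\bf B}+({\bf v}\cdot{\bf B}){\bf v})$ in the momentum flux and the term $v_i{\bf B}-B_i{\bf v}$ in the induction flux, but a direct check shows these rotate correctly because they are built from the vectors ${\bf v},{\bf B}$ contracted with the coordinate direction $e_i$. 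One alternative, if one prefers to avoid quoting the flux-rotation identity, is to repeat verbatim the original proof of Lemma~\ref{theo:RMHD:LLFsplit} from \cite{WuTangM3AS} with $\vec F_i$ replaced by $\langle{\bf n},\vec F\rangle$ throughout; but the rotational-reduction argument above is cleaner and shorter, so that is the route I would take.
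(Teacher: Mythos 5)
Your proposal is correct and is exactly the argument the paper has in mind: the paper's proof of this lemma is a one-line remark that it "can be proven by using Lemma \ref{theo:RMHD:LLFsplit} and the rotational invariance of the RMHD system," and your rotational-reduction via ${\mathcal T}=\mathrm{diag}\{1,T,T,1\}$, together with the checks that $\widetilde{\bm\xi}^*={\mathcal T}{\bm\xi}^*$, $\widetilde p_m^*=p_m^*$, and ${\mathcal G}$ is rotation-invariant, supplies precisely the omitted details.
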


\begin{proof}
This can be proven by using Lemma \ref{theo:RMHD:LLFsplit} and the rotational invariance of the RMHD system. The proof is omitted.  	
\end{proof}

\section{Provably PCP DG Schemes}\label{sec:2Dpcp}

In this section, we construct  
PCP high-order DG schemes for 
the multidimensional RMHD based on the symmetrizable form \eqref{ModRMHD}.  
For the sake of clarity, we shall mainly focus on the 2D case ($d=2$), 
keeping in mind that our PCP methods and 
analyses are also extendable to the 3D case ($d=3$). 

\subsection{Outline of the PCP schemes}

Assume that the 2D spatial domain $\Omega$ is partitioned into a mesh ${\mathcal T}_h$, which may be unstructured 
and consists of 
polygonal cells. 
The time interval is also divided into the mesh $\{t_0=0, t_{n+1}=t_n+\Delta t_{n}, 0\le n <N_t\}$
with the time step-size $\Delta t_{n}$ determined by some CFL condition. 
Throughout this section, 
the lower-case letter $k$ is used to denote 
the DG polynomial degree, 
while the 
capital letter $K$ always represents a cell in ${\mathcal T}_h$.

Let ${\bf x}  \in \mathbb R^d$ denote the spatial coordinate vector. We define the locally divergence-free DG finite element 
space \cite{Li2005} 
\begin{equation*}
{\mathbb W}_h^{k} 
= \left\{ {\bf u}=(u_1,\cdots,u_8)^\top:~ u_\ell \big|_{K} \in {\mathbb P}^{k} (K),
\forall \ell,~   
\sum_{i=1}^d \frac{ \partial u_{4+i}}{\partial {x_i}} 
\bigg|_{K}  = 0,~\forall K \in {\mathcal T}_h   \right\},
\end{equation*}
where ${\mathbb P}^{k} (K)$ denotes the space of polynomials, in cell $K$, of total degree up to $k$. 
To define the PCP DG schemes, we also introduce the following two subsets of ${\mathbb W}_h^{k}$: 
\begin{align}
& \overline {\mathbb G}_h^{k} 
:= \left\{ {\bf u} \in {\mathbb W}_h^{k}:~ \frac{1}{|K|} \int_K {\bf u}({\bf x}) d {\bf x} \in {\mathcal G},~~ \forall K \in  {\mathcal T}_h  \right\},
\\  \label{eq:Ghk}
&  {\mathbb G}_h^{k} 
:= \left \{ {\bf u} \in \overline {\mathbb G}_h^{k}:~ {\bf u} \big|_K ( {\bf x} )  \in {\mathcal G},~~ \forall {\bf x} \in \mathbb S_K,~ \forall K \in  {\mathcal T}_h  \right \},
\end{align}
where $|K|$ denotes the area of 
the cell $K$, and $\mathbb S_K$ denotes the set of some critical points in $K$ which will be specified later. 

\begin{definition}
A DG scheme is defined to be PCP if its solutions always stay in ${\mathbb G}_h^{k}$. 
For clarity, if a DG scheme preserves 
the numerical solutions in $\overline {\mathbb G}_h^{k} $, then we say it satisfies a ``weak'' PCP property.
\end{definition}

\begin{lemma}\label{lem:Convex2}
	The sets $\overline {\mathbb G}_h^{k}$ and $ {\mathbb G}_h^{k}$ are both convex. 
	In addition, for any vector function ${\bf u}\in [L^2 (\Omega)]^8$ satisfying ${\bf u}({\bf x}) \in {\mathcal G},~\forall {\bf x} \in \Omega$, we have 
	${\bf P}_{w} ({\bf u}) \in \overline {\mathbb G}_h^{k}  $, where 
	${\bf P}_{w}$ denoting the $L^2$-projection into $\mathbb W_h^k$.
\end{lemma}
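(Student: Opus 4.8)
The plan is to reduce the whole statement to three elementary facts and then assemble them. Fact (i): $\mathbb W_h^{k}$ is a linear space, because all of its defining conditions — componentwise polynomial degree $\le k$ on each cell, together with the local constraint $\sum_{i=1}^d \partial u_{4+i}/\partial x_i|_K = 0$ — are linear in ${\bf u}$. Fact (ii): the cell-averaging map ${\bf u}\mapsto \frac{1}{|K|}\int_K {\bf u}\,d{\bf x}$ and each point-evaluation map ${\bf u}\mapsto {\bf u}|_K({\bf x})$ (well defined since ${\bf u}|_K$ is a polynomial) are linear. Fact (iii): ${\mathcal G}$ is convex, by Lemma~\ref{theo:RMHD:convex} and the equivalence ${\mathcal G}={\mathcal G}_1$.

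For convexity, I would take ${\bf u},\tilde{\bf u}\in\overline{\mathbb G}_h^{k}$ and $\lambda\in[0,1]$, and set ${\bf u}_\lambda:=\lambda{\bf u}+(1-\lambda)\tilde{\bf u}$. By (i), ${\bf u}_\lambda\in{\mathbb W}_h^{k}$; by (ii), the cell average of ${\bf u}_\lambda$ over any $K\in{\mathcal T}_h$ is the $\lambda$-convex combination of those of ${\bf u}$ and $\tilde{\bf u}$, hence a convex combination of two states in ${\mathcal G}$, which by (iii) lies in ${\mathcal G}$; thus ${\bf u}_\lambda\in\overline{\mathbb G}_h^{k}$. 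Convexity of ${\mathbb G}_h^{k}$ follows by the same argument plus the pointwise observation that, for each fixed ${\bf x}\in\mathbb S_K$, ${\bf u}_\lambda|_K({\bf x})$ is a convex combination of ${\bf u}|_K({\bf x})\in{\mathcal G}$ and $\tilde{\bf u}|_K({\bf x})\in{\mathcal G}$, hence in ${\mathcal G}$ by (iii).

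For the projection claim, I would first note that ${\bf P}_{w}({\bf u})\in{\mathbb W}_h^{k}$ by definition, so only the cell averages of ${\bf P}_{w}({\bf u})$ need to be checked against ${\mathcal G}$. The key step is that ${\bf P}_{w}$ preserves cell averages: for any fixed $K$ and any $\ell\in\{1,\dots,8\}$, the function equal to the constant vector ${\bf e}_\ell\in{\mathbb R}^8$ on $K$ and equal to $\bf 0$ on every other cell is piecewise constant, hence piecewise polynomial, and trivially satisfies the local divergence constraint, so it belongs to ${\mathbb W}_h^{k}$; inserting it into the $L^2$-orthogonality relation defining ${\bf P}_{w}$ yields $\int_K {\bf P}_{w}({\bf u})\,d{\bf x}=\int_K {\bf u}\,d{\bf x}$, and therefore $\frac{1}{|K|}\int_K {\bf P}_{w}({\bf u})\,d{\bf x}=\frac{1}{|K|}\int_K {\bf u}({\bf x})\,d{\bf x}$. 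Since ${\bf u}\in[L^2(\Omega)]^8\subset[L^1(K)]^8$ takes values in the convex set ${\mathcal G}$, this average again lies in ${\mathcal G}$, which gives ${\bf P}_{w}({\bf u})\in\overline{\mathbb G}_h^{k}$.

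The one genuinely delicate point is this last assertion — that the mean of a ${\mathcal G}$-valued $L^1$ function belongs to ${\mathcal G}$ — because ${\mathcal G}$ is an \emph{open} convex set, so a bare appeal to Jensen's inequality needs a short justification. I would make it concrete via the first equivalent form of Lemma~\ref{theo:RMHD:CYconditionFINAL2}: $D(\cdot)$ is linear and strictly positive on ${\mathcal G}_1$, while the constraint functions $q({\bf U})=E-\sqrt{D^2+|{\bf m}|^2}$ and $\Psi({\bf U})$ are concave on ${\mathcal G}_1$ — the concavity underlying Lemma~\ref{theo:RMHD:convex} — so Jensen's inequality for concave functions makes $q$ and $\Psi$ at the cell average no smaller than the cell averages of the strictly positive quantities $q({\bf u}(\cdot))$ and $\Psi({\bf u}(\cdot))$, hence strictly positive; thus the average satisfies all three constraints of ${\mathcal G}_1={\mathcal G}$. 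I expect this step to be the only place requiring genuine care; everything else is bookkeeping with linearity and convexity.
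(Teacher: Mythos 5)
Your overall architecture is sound and matches what the paper intends (the paper simply writes ``these conclusions directly follow from the convexity of ${\mathcal G}={\mathcal G}_1$'' and omits the details): linearity of ${\mathbb W}_h^k$, of cell averaging and of point evaluation, convexity of ${\mathcal G}$, and the observation that ${\bf P}_w$ preserves cell averages because the piecewise-constant function ${\bf e}_\ell\chi_K$ lies in ${\mathbb W}_h^k$ (constants trivially satisfy the local divergence constraint). You are also right to flag that the only delicate point is showing that the mean of a ${\mathcal G}$-valued integrable function lies in the \emph{open} set ${\mathcal G}$.

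However, your resolution of that delicate point contains a genuine gap: you assert that $\Psi$ is concave on ${\mathcal G}_1$ and attribute this to ``the concavity underlying Lemma~\ref{theo:RMHD:convex}.'' That lemma only states that the \emph{set} ${\mathcal G}_1$ is convex; convexity of a set of the form $\{\Psi>0\}$ gives at best quasi-concavity of $\Psi$ on that set, which is not enough for Jensen's inequality. Concavity of the rather intricate function $\Psi$ (nested square roots of combinations of $D$, ${\bf m}$, ${\bf B}$, $E$) is neither stated nor used anywhere in the paper, and the convexity of ${\mathcal G}_1$ in the cited reference is in fact obtained through the equivalence with ${\mathcal G}_2$, not through concavity of $\Psi$. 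The clean repair is to run your averaging argument through the second equivalent form (Lemma~\ref{theo:RMHD:CYcondition:VecN}): for each fixed ${\bf B}^*\in{\mathbb R}^3$ and ${\bf v}^*\in{\mathbb B}_1({\bf 0})$ the constraint ${\bf U}\cdot{\bm\xi}^*+p_m^*>0$ is \emph{affine} in ${\bf U}$, so
$\frac{1}{|K|}\int_K {\bf u}\,d{\bf x}\cdot{\bm\xi}^*+p_m^*=\frac{1}{|K|}\int_K\bigl({\bf u}({\bf x})\cdot{\bm\xi}^*+p_m^*\bigr)\,d{\bf x}>0$
because the integrand is strictly positive and integrable ($\,{\bf u}\in[L^2(\Omega)]^8\subset[L^1(K)]^8$ on each bounded cell), and likewise $\frac{1}{|K|}\int_K D\,d{\bf x}>0$; hence the cell average lies in ${\mathcal G}_2={\mathcal G}$. (Equivalently, one may invoke the general fact that the barycenter of an integrable function valued in an open convex set belongs to that set, by a separating-hyperplane argument; the functionals appearing in ${\mathcal G}_2$ are precisely those hyperplanes made explicit.) With this substitution your proof is complete; $q$ being concave is true but is not needed either.
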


\begin{proof}
These conclusions directly follow from that ${\mathcal G}={\mathcal G}_1$ is a convex set, which is implied by Lemmas \ref{theo:RMHD:CYconditionFINAL2} and \ref{theo:RMHD:convex}. The proof is omitted. 
\end{proof}

We aim at constructing PCP high-order accurate DG schemes that always preserve the DG solution function ${\bf U}_h({\bf x},t)$ in the set $ {\mathbb G}_h^{k}$ for all $t\in \{t_n: 0 \le n \le N_t\}$.  This goal will be achieved by following three steps:

\begin{enumerate}
	\item First, we will seek in Sect.~\ref{sec:Lh} a suitable spatial discretization of symmetrizable RMHD system \eqref{ModRMHD}, such that the resulting discrete equation, which can be put in ODE form 
	as $
	\frac{d}{dt} {\bf U}_h = {\bf L}_h ( {\bf U}_h ) 
	$, 
	satisfies the ``weak'' PCP property
	\begin{equation}\label{eq:Lhcondition}
	\mbox{if} ~~
	{\bf U}_h  \in  {\mathbb G}_h^{k}, \qquad \mbox{then} ~~
	{\bf U}_h + \Delta t {\bf L}_h ( {\bf U}_h ) \in \overline {\mathbb G}_h^{k},
	\end{equation}
	under some CFL condition on $\Delta t $. 
	{\em The property \eqref{eq:Lhcondition} is very important. It is extremely nontrivial to find a DG discretization for the RMHD that can be proven to satisfy \eqref{eq:Lhcondition}.} 
	Some traditional methods including  
	standard DG schemes for the conservative RMHD system \eqref{eq:RMHD} do not satisfy \eqref{eq:Lhcondition}.   
	\item Then, we further discretize the ODE system $\frac{d}{dt} {\bf U}_h = {\bf L}_h ( {\bf U}_h )$ in time using a strong-stability-preserving (SSP) explicit Runge-Kutta 
	method \cite{GottliebShuTadmor2001}. 
	\item Finally, 
	a local scaling PCP limiting procedure, which will be introduced in Sect.~\ref{sec:PCPlimiter}, 
	is applied to the intermediate solutions of the Runge-Kutta discretization. 
	This procedure corresponds to an operator 
	$
	{\bf \Pi}_h:  \overline {\mathbb G}_h^{k}  \longrightarrow  {\mathbb G}_h^{k} 
	$, which maps the numerical solutions from $\overline {\mathbb G}_h^{k}$ to $ {\mathbb G}_h^{k}$ and satisfies 
	\begin{equation}\label{eq:conPCP}
	\frac{1}{|K|} \int_K {\bf \Pi}_h ( {\bf u} ) d {\bf x} =  \frac{1}{|K|} \int_K {\bf u} d {\bf x}, \qquad \forall K \in  {\mathcal T}_h,~~\forall 
	{\bf u} \in \overline {\mathbb G}_h^{k}. 
	\end{equation}
	The PCP limiter is required only for high-order DG methods with $k\ge 1$; for the first-order DG method ($k=0$), ${\bf \Pi}_h$ becomes the identity operator. 
\end{enumerate}
Let ${\bf U}_h^{n}$ denote the numerical solution at time $t=t_n$. The resulting fully discrete PCP DG methods, with a $N_r$-stage SSP Runge-Kutta method, can be written in the following form: 
\begin{itemize}
	\item Set ${\bf U}_h^0 = {\bf \Pi}_h {\bf P}_{w} ( {\bf U}( {\bf x}, 0 ) ) $; 
	\item For $n=0,\dots,N_t-1$, compute ${\bf U}_h^{n+1}$ as follows:
		\begin{enumerate}[label=(\roman*)]
			\item set ${\bf U}_h^{(0)} = {\bf U}_h^{n}$;
			\item for $i=1,\dots,N_r$ compute the intermediate solutions:
					\begin{equation}\label{eq:RK}
					{\bf U}_h^{(i)} = {\bf \Pi}_h \left\{ \sum_{\ell=0}^{i-1} 
					\bigg[
					\alpha_{i\ell} \left( {\bf U}_h^{ (\ell) } + \beta_{ i\ell } \Delta t_n {\bf L}_h ( {\bf U}_h^{(\ell)} ) \right)
					\bigg]
					  \right\};
					\end{equation}
			\item set ${\bf U}_h^{n+1} = {\bf U}_h^{(N_r)} $;
		\end{enumerate}	
\end{itemize}
where the SSP Runge-Kutta method has been written into a convex combination of the forward Euler method, and the associated 
parameters
$\alpha_{i\ell}$ and $\beta_{i\ell}$ are all non-negative and satisfy 
$
\sum_{\ell = 0}^{i-1} \alpha_{i\ell} = 1. 
$
Some SSP Runge-Kutta methods can be found in \cite{GottliebShuTadmor2001,SunShu2019}, e.g., a commonly-used three-stage third-order version is given by 
\begin{equation}\label{3SSP}
\begin{split}
&\alpha_{10}=1,~~\alpha_{20}=3/4,~~\alpha_{21}=1/4,~~\alpha_{30}=1/3,~~\alpha_{31}=0,~~\alpha_{32}=2/3,
\\
&\beta_{10}=1,~~\beta_{20}=0,~~\beta_{21}=1,~~\beta_{30}=0,~~\beta_{31}=0,~~\beta_{32}=1.  
\end{split}   
\end{equation}

\begin{remark}
At each Runge-Kutta stage, 
the PCP property of the above fully discrete DG schemes is enforced by the operator ${\bf \Pi}_h$, 
	which can only act on functions in 
	$\overline {\mathbb  G}_h^k$. 
That is, we require the convex combination 
	$
	\sum_{\ell=0}^{i-1} 
	\big[
	\alpha_{i\ell} \big( {\bf U}_h^{ (\ell) } + \beta_{ i\ell } \Delta t_n {\bf L}_h ( {\bf U}_h^{(\ell)} ) \big)
	\big] \in \overline {\mathbb  G}_h^k,
	$
which is guaranteed by the weak PCP property \eqref{eq:Lhcondition} and 
 the convexity of $\overline {\mathbb G}_h^{k}$. 
On the other hand, the PCP limiting operator ${\bf \Pi}_h$ enforces 
	${\bf U}_h^{(\ell)} \in {\mathbb G}_h^{k},~0\le \ell < i$, 
	which provides the condition required by 
	the weak PCP property \eqref{eq:Lhcondition} 
	for the next Runge-Kutta stage evolution. 
	Therefore, the weak PCP property \eqref{eq:Lhcondition} and the PCP limiting operator ${\bf \Pi}_h$ are two key ingredients of the proposed PCP schemes.
\end{remark}

In what follows, we shall describe in detail the operators ${\bf L}_h$ and ${\bf \Pi}_h$, and also specify the point set 
$\mathbb S_K$ in the definition \eqref{eq:Ghk} of $\mathbb G_h^k$. 
We will prove the weak PCP property \eqref{eq:Lhcondition} of the DG spatial discretization in Theorem \ref{thm:Pkge1} and the  PCP property of the 
 fully discrete DG schemes in Theorems \ref{thm:strongPCP1} and \ref{thm:strongPCP}.

\subsection{The operator ${\bf L}_h$ and the weak PCP property}\label{sec:Lh}

We now derive a suitable spatial discretization such that the resulting operator ${\bf L}_h$ satisfies 
the weak PCP property \eqref{eq:Lhcondition}. 
Following our previous work on the ideal non-relativistic MHD \cite{WuShu2018,WuShu2019}, 
we consider the following locally divergence-free DG methods for the symmetrizable RMHD system \eqref{ModRMHD}: 
\begin{align}\nonumber
& \frac{d}{dt} \int_{K} {\bf U}_h({\bf x},t)  \cdot  {\bf u}    d {\bf x}
=  \int_{K}    
{\bf F}  ( {\bf U}_h ) \cdot \nabla {\bf u}  d {\bf x}  
\\ \nonumber
& \quad  - \sum_{ {\mathscr E} \in \partial K }  \int_{ {\mathscr E}  } 
{\bf u}^{{\rm int}(K)} \cdot \bigg\{
\hat{\bf F} \left( {\bf U}_h^{{\rm int}(K)}, {\bf U}_h^{{\rm ext}(K)}; {\bf n}_{{\mathscr E},K} \right)   
\\ 
& \qquad \quad 
+ \left[
\frac12
\left \langle {\bf n}_{{\mathscr E},K}, {\bf B}_h^{{\rm ext}(K)} - {\bf B}_h^{{\rm int}(K)} \right \rangle 
{\bf S} \big({\bf U}_h^{{\rm int}(K)} \big)  \right] \bigg\} ds,\qquad \forall {\bf u} \in {\mathbb W}_h^{k}, \label{eq:2DDGUh}
\end{align} 
where $\partial K$ denotes the boundary of the cell $K$; 
${\bf n}_{{\mathscr E},K}$ is the outward unit normal to the edge ${\mathscr E}$ of $K$; the inner product $\langle \cdot, \cdot \rangle$ is defined in \eqref{eq:innerprod};  
the superscripts ``${\rm int}(K)$'' and ``${\rm ext}(K)$'' indicate that the associated limits of ${\bf U}_h (\bf x)$ at the cell interfaces are taken from the interior and exterior of $K$, respectively. 
In \eqref{eq:2DDGUh}, $\hat{\bf F}$ denotes the numerical flux, which we take as the global Lax-Friedrichs flux 
\begin{equation}\label{eq:LFflux}
\begin{aligned}
&\hat{\bf F} \left( {\bf U}_h^{{\rm int}(K)}, {\bf U}_h^{{\rm ext}(K)}; {\bf n}_{{\mathscr E},K} \right) 
\\
& \quad = \frac12 
\left( 
\left\langle {\bf n}_{{\mathscr E},K}, {\bf F} ({\bf U}_h^{{\rm int}(K)})  + 
 {\bf F} ({\bf U}_h^{{\rm ext}(K)}) \right\rangle  
-   a ( {\bf U}_h^{{\rm ext}(K)} - {\bf U}_h^{{\rm int}(K)} )  \right),
\end{aligned}
\end{equation}
where the numerical viscosity parameter $a$ is taken as the speed of light $c=1$, a simple upper bound of all wave speeds. 
The term inside the square bracket in \eqref{eq:2DDGUh} is derived from a suitable discretization of the source term in the symmetrizable RMHD system \eqref{ModRMHD}, where the locally divergence-free property of ${\bf B}_h$ has been taken into account. 
This term is proportional to the jump of the normal magnetic component across cell interface, 
which is zero for the exact solution and is very small (at the level of truncation error) for numerical solutions. 
The inclusion of this term is crucial for achieving the property \eqref{eq:Lhcondition}, as demonstrated by our theoretical analysis later.

Of course, we have to replace the boundary and element integrals at the right-hand side of \eqref{eq:2DDGUh} by some quadrature rules of sufficiently high-order accuracy (specifically, the algebraic degree of accuracy should be at least $2k$). 
For example, we can approximate the boundary integral by the Gauss quadrature  
with $Q=k+1$ points: 
 \begin{equation*}
\begin{split}
& \int_{ {\mathscr E} } {\bf u}^{{\rm int}(K)} \cdot 
\bigg[
\hat{\bf F} \left( {\bf U}_h^{{\rm int}(K)}, {\bf U}_h^{{\rm ext}(K)}; {\bf n}_{{\mathscr E},K} \right)  
+\frac12 
\left \langle {\bf n}_{{\mathscr E},K}, {\bf B}_h^{{\rm ext}(K)} - {\bf B}_h^{{\rm int}(K)} \right \rangle 
{\bf S} \big({\bf U}_h^{{\rm int}(K)} \big)  \bigg]  ds
\\
& \quad  \approx |{\mathscr E}| \sum_{q=1}^Q \omega_q
{\bf u}^{{\rm int}(K)} ( {\bf x}_{\mathscr E}^{(q)} ) \cdot 
\bigg[
\hat{\bf F} \left( {\bf U}_h^{{\rm int}(K)} ( {\bf x}_{\mathscr E}^{(q)},t ), {\bf U}_h^{{\rm ext}(K)} ( {\bf x}_{\mathscr E}^{(q)},t ); {\bf n}_{{\mathscr E},K} \right)  
\\ 
& \qquad 
+ \frac12
\left \langle {\bf n}_{{\mathscr E},K}, {\bf B}_h^{{\rm ext}(K)} ( {\bf x}_{\mathscr E}^{(q)},t ) - {\bf B}_h^{{\rm int}(K)} ( {\bf x}_{\mathscr E}^{(q)},t ) \right \rangle 
{\bf S} \left({\bf U}_h^{{\rm int}(K)} ( {\bf x}_{\mathscr E}^{(q)},t ) \right)  \bigg],
\end{split}
\end{equation*}
where $|{\mathscr E}|$ denotes the length of the edge ${\mathscr E}$, 
$ \{{\bf x}_{\mathscr E}^{(q)}\}_{1\le q \le Q}$ are the quadrature points on ${\mathscr E}$, 
and $\{\omega_q\}_{1\le q \le Q}$ are the associated weights with $\sum_{q=1}^Q \omega_q = 1$. 
The element integral $\int_{K}    
{\bf F}  ( {\bf U}_h ) \cdot \nabla {\bf u}  d {\bf x} $ can also be approximated by some 2D quadrature
$
|K| \sum_{ q = 1}^{\breve Q} \breve \omega_q   
{\bf F}  ( {\bf U}_h ( \breve {\bf x}_K^{(q)},t ) ) \cdot \nabla {\bf u} ( \breve {\bf x}_K^{(q)} ),
$
where $\breve {\bf x}_K^{(q)}$ and $\breve \omega_q$ denote the quadrature points and weights, respectively.  

Thus, we finally obtain the weak formulation: 
\begin{equation}\label{eq:2DDGUh-weak}
\frac{d}{dt} \int_{K} {\bf U}_h \cdot  {\bf u}    d {\bf x} = 
{\mathcal J}_K ( {\bf U}_h, {\bf u}  ), \qquad  \forall {\bf u} \in {\mathbb W}_h^{k},
\end{equation} 
where ${\mathcal J}_K ( {\bf U}_h, {\bf u}  ) = \sum_{i=1}^3 {\mathcal J}_K^{(i)} ( {\bf U}_h, {\bf u}  )$ with 
\begin{align*}
& {\mathcal J}_K^{(1)}   =  -\sum_{ {\mathscr E} \in \partial K } \left\{ 
|{\mathscr E}| \sum_{q=1}^Q \omega_q
\hat{\bf F} \left( {\bf U}_h^{{\rm int}(K)} ( {\bf x}_{\mathscr E}^{(q)} ), {\bf U}_h^{{\rm ext}(K)} ( {\bf x}_{\mathscr E}^{(q)} ); {\bf n}_{{\mathscr E},K} \right)  \cdot {\bf u}^{{\rm int}(K)} ( {\bf x}_{\mathscr E}^{(q)} ) \right\},
\\ 
& {\mathcal J}_K^{(2)}  = 
 -\frac12  \sum_{ {\mathscr E} \in \partial K } \Bigg\{ 
 |{\mathscr E}| \sum_{q=1}^Q \omega_q  \left \langle {\bf n}_{{\mathscr E},K}, {\bf B}_h^{{\rm ext}(K)} ( {\bf x}_{\mathscr E}^{(q)} ) - {\bf B}_h^{{\rm int}(K)} ( {\bf x}_{\mathscr E}^{(q)} ) \right \rangle 
{\bf S} \left({\bf U}_h^{{\rm int}(K)} ( {\bf x}_{\mathscr E}^{(q)} ) \right) \cdot {\bf u}^{{\rm int}(K)} ( {\bf x}_{\mathscr E}^{(q)} ) 
\Bigg\},
\\
& {\mathcal J}_K^{(3)}  =
|K| \sum_{ q = 1}^{\breve Q} \breve \omega_q   
{\bf F}  ( {\bf U}_h ( \breve {\bf x}_K^{(q)} ) ) \cdot \nabla {\bf u} ( \breve {\bf x}_K^{(q)} ),
\end{align*}
and for notational convenience, the $t$ dependence of all quantities is suppressed hereafter, unless confusion arises otherwise. 
As the standard DG methods (cf.~\cite{CockburnShu1989,CockburnHouShu1990}), the weak form \eqref{eq:2DDGUh-weak} 
can be rewritten in the ODE form as 
\begin{equation}\label{eq:ODE}
\frac{d}{dt} {\bf U}_h = {\bf L}_h ( {\bf U}_h), 
\end{equation}
after choosing a suitable basis of $\mathbb W_h^k$ and representing 
${\bf U}_h$ as a linear combination of the basis functions; see \cite{CockburnShu1989,CockburnHouShu1990} for details. 
Note that the corresponding cell average, denoted by 
$ \bar {\bf U}_K := \frac{1}{|K|} \int_{K} {\bf U}_h d {\bf x}$,  
satisfies the following time evolution equation 
\begin{equation}\label{eq:cellave-eq}
\frac{d}{dt} \bar {\bf U}_K  = 
\widetilde {\bm {\mathcal J}}_K ( {\bf U}_h  ),   \qquad \forall K \in {\mathcal T}_h,
\end{equation}
where $\widetilde {\bm {\mathcal J}}_K ( {\bf U}_h  ) = \widetilde {\bm {\mathcal J}}_K^{(1)} ( {\bf U}_h  ) + \widetilde {\bm {\mathcal J}}_K^{(2)} ( {\bf U}_h  ) $ with 
\begin{align*}
&\widetilde {\bm {\mathcal J}}_K^{(1)} ( {\bf U}_h )  =  - \frac{1}{|K|}  \sum_{ {\mathscr E} \in \partial K } \left\{ 
|{\mathscr E}| \sum_{q=1}^Q \omega_q
\hat{\bf F} \left( {\bf U}_h^{{\rm int}(K)} ( {\bf x}_{\mathscr E}^{(q)} ), {\bf U}_h^{{\rm ext}(K)} ( {\bf x}_{\mathscr E}^{(q)} ); {\bf n}_{{\mathscr E},K} \right)    \right\},
\\ 
&\widetilde {\bm {\mathcal J}}_K^{(2)} ( {\bf U}_h)  = 
-\frac1{2|K|}  \sum_{ {\mathscr E} \in \partial K } \Bigg\{ 
|{\mathscr E}| \sum_{q=1}^Q \omega_q  \left \langle {\bf n}_{{\mathscr E},K}, {\bf B}_h^{{\rm ext}(K)} ( {\bf x}_{\mathscr E}^{(q)} ) - {\bf B}_h^{{\rm int}(K)} ( {\bf x}_{\mathscr E}^{(q)} ) \right \rangle 
{\bf S} \left({\bf U}_h^{{\rm int}(K)} ( {\bf x}_{\mathscr E}^{(q)} ) \right) \Bigg\}.
\end{align*}

We are now in a position to rigorously prove that the above DG spatial discretization 
satisfies 
the weak PCP property \eqref{eq:Lhcondition}. 
To this end,   
  we first need to 
 specify the point set 
 $\mathbb S_K$ in the definition \eqref{eq:Ghk} of $\mathbb G_h^k$. 
Assume that there exists a special 2D quadrature on each cell $K \in {\mathcal T}_h$ satisfying:
\begin{enumerate}[label=(\roman*)]
	\item The quadrature rule is with positive weights and 
	exact for integrals of polynomials of degree up to $k$ on the cell $K$.
	\item The set of the quadrature points, denoted by $\widehat {\mathbb S}_K$, must include all the Gauss quadrature points $ {\bf x}_{\mathscr E}^{(q)}$, $q=1,\dots,Q$, 
	on all the edge ${\mathscr E} \in \partial K$. 
\end{enumerate}
In other words, we would like to have a special quadrature such that 
\begin{equation}\label{eq:decomposition}
\frac{1}{|K|}\int_K u({\bf x}) d{\bf x}  = 
\sum_{{\mathscr E} \in \partial K} \sum_{q=1}^Q \varpi_{{\mathscr E}}^{(q)} u ( {\bf x}_{{\mathscr E}}^{(q)} ) + \sum_{ q=1 }^{\widetilde Q} \widetilde \varpi_q   
u ( \widetilde {\bf x}_K^{(q)} ), \quad \forall 
u \in {\mathbb P}^{k}(K) ,
\end{equation}
where $\{\widetilde {\bf x}_K^{(q)}\}$ are the other (possible) quadrature points in $K$, and the quadrature weights 
$\varpi_{{\mathscr E}}^{(q)}$ and $\widetilde \varpi_q $ 
are positive and satisfy 
$
\sum_{{\mathscr E} \in \partial K} \sum_{q=1}^Q \varpi_{{\mathscr E}}^{(q)} + \sum_{ q=1 }^{\widetilde Q} \widetilde \varpi_q =1.
$
For rectangular cells, such a quadrature was constructed in \cite{zhang2010,zhang2010b} by tensor products of Gauss quadrature and Gauss--Lobatto quadrature. 
For triangular cells and more general polygons, see \cite{zhang2012maximum,VILAR2016416,du2018positivity} 
for how to construct such a special quadrature. 
We remark that this special quadrature is not used for computing any integrals, but only used in the following theoretical PCP analysis and the PCP limiter presented later.

Given this special quadrature, we define 
 the point set 
$\mathbb S_K$ required in \eqref{eq:Ghk} as 
\begin{equation}\label{eq:SK}
\mathbb S_K = \widehat {\mathbb S}_K \cup \breve {\mathbb S}_K,
\end{equation}
where $\breve {\mathbb S}_K := \{ \breve {\bf x}_K^{(q)}, 1\le q \le \breve Q \}$ are the quadrature points involved in ${\mathcal J}_K^{(3)}  $.  
The inclusion of $\breve {\mathbb S}_K$ means that we require 
${\bf U}_h ( \breve {\bf x}_K^{(q)} ) \in \mathcal G$. 
This special requirement does not appear in
the non-relativistic case; it is used here to ensure the existence and uniqueness of the  
physically admissible solution of the nonlinear equation \eqref{eq:RMHD:fU(xi)} and thus obtaining the physical primitive variables from ${\bf U}_h ( \breve {\bf x}_K^{(q)} )$ by \eqref{getprimformU}, so as to successfully compute    
 ${\bf F}  ( {\bf U}_h ( \breve {\bf x}_K^{(q)} ) )$ in ${\mathcal J}_K^{(3)}$. 
Such a consideration is due to 
 that the flux ${\bf F}({\bf U})$ and source ${\bf S}({\bf U})$ cannot be explicitly formulated in terms of ${\bf U}$ for the RMHD and thus must be computed using the corresponding primitive variables. Note that the edge quadrature points $\{ {\bf x}_{\mathscr E}^{(q)} \}$, involved in ${\mathcal J}_K^{(1)}  $ and ${\mathcal J}_K^{(2)} $, are already included in $\widehat {\mathbb S}_K$.

Based on the point set $\mathbb S_K$ defined above, 
we establish the weak PCP property \eqref{eq:Lhcondition} for the high-order  semi-discrete DG scheme \eqref{eq:ODE} as follows. 

\begin{theorem}\label{thm:Pkge1}
	Let $\mathbb G_K^h$ be 
	the set defined by \eqref{eq:Ghk} with $\mathbb S_K$ specified in \eqref{eq:SK}. 
	Then, the weak PCP property \eqref{eq:Lhcondition} holds under the following CFL type condition 
	\begin{equation}\label{eq:CFL:2DMHD}
	\Delta t \frac{| {\mathscr E} |}{|K|} 
	\big( a + \sigma_{K,{\mathscr E},q} ({\bf U}_h ) \big)
	 < \frac{ \varpi_{\mathscr E}^{(q)}}{\omega_q},\quad 1\le q\le Q,~ \forall {\mathscr E} \in \partial K,~ 
	\forall K\in{\mathcal T}_h,
	\end{equation}
	where 
	$\sigma_{K,{\mathscr E},q} ({\bf U}_h ) := \frac12 { \left| \big\langle {\bf n}_{ {\mathscr E}, K },  {\bf B}_{{\mathscr E},q}^{{\rm int}(K)} - {\bf B}_{{\mathscr E},q}^{{\rm ext}(K)}  \big \rangle \right| }/{  \sqrt{ \rho_{{\mathscr E},q}^{{\rm int}(K)}   H_{{\mathscr E},q}^{{\rm int}(K)} } } $ with 
	the shortened notations 
	${\bf U}^{{\rm int}(K)}_{{\mathscr E},q}:=
	{\bf U}_h^{{\rm int}(K)} ( {\bf x}_{\mathscr E}^{(q)} )$ and ${\bf U}^{{\rm ext}(K)}_{{\mathscr E},q}:=
	 {\bf U}_h^{{\rm ext}(K)} ( {\bf x}_{\mathscr E}^{(q)} )$. 
\end{theorem}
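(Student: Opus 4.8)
The plan is to verify the weak PCP property \eqref{eq:Lhcondition} cell by cell. Since membership in $\overline{\mathbb G}_h^k$ constrains only the cell averages (the locally divergence-free property being automatically inherited, as $\mathbb W_h^k$ is a linear space, and ${\bf L}_h({\bf U}_h)$ being well defined because ${\bf U}_h\in\mathbb G_h^k$), it suffices to show that for each $K\in\mathcal T_h$ the updated average $\bar{\bf U}_K+\Delta t\,\widetilde{\bm{\mathcal J}}_K({\bf U}_h)$, with $\widetilde{\bm{\mathcal J}}_K=\widetilde{\bm{\mathcal J}}_K^{(1)}+\widetilde{\bm{\mathcal J}}_K^{(2)}$ as in \eqref{eq:cellave-eq}, lies in $\mathcal G$. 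To handle the highly nonlinear constraints defining $\mathcal G$, I would switch to the second equivalent form $\mathcal G_2$ of Lemma \ref{theo:RMHD:CYcondition:VecN}: it is enough to establish (i) the density component of the updated average is positive, and (ii) $\big(\bar{\bf U}_K+\Delta t\,\widetilde{\bm{\mathcal J}}_K\big)\cdot{\bm\xi}^*+p_m^*>0$ for every ${\bf B}^*\in\mathbb R^3$ and every ${\bf v}^*\in\mathbb B_1({\bf 0})$. Both constraints are now linear in ${\bf U}$, hence compatible with the convex splittings to come; this is the ``quasi-linearization'' advertised in the introduction, and from here on ${\bf B}^*$ and ${\bf v}^*$ are fixed but arbitrary.

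For (ii) I would feed the cell average through the special quadrature identity \eqref{eq:decomposition}, writing $\bar{\bf U}_K$ as a positive convex combination of the interior point values ${\bf U}_h(\widetilde{\bf x}_K^{(q)})$ and the edge Gauss-point values ${\bf U}_{{\mathscr E},q}^{{\rm int}(K)}$; all of these lie in $\mathcal G$ because the corresponding points belong to $\mathbb S_K$, and likewise ${\bf U}_{{\mathscr E},q}^{{\rm ext}(K)}\in\mathcal G$ since that Gauss point belongs to $\mathbb S_{K'}$ of the neighbouring cell (boundary edges being handled via admissible boundary data). The interior points contribute a manifestly nonnegative part. Each edge point value must then be combined with the matching Lax--Friedrichs flux (with $a=1$) from $\widetilde{\bm{\mathcal J}}_K^{(1)}$ and the discretized source from $\widetilde{\bm{\mathcal J}}_K^{(2)}$; setting $\lambda_{{\mathscr E},q}:=\Delta t\,\omega_q|{\mathscr E}|/|K|$ and ${\bf n}={\bf n}_{{\mathscr E},K}$, the resulting per-point vector regroups as $(\varpi_{\mathscr E}^{(q)}-\lambda_{{\mathscr E},q})\,{\bf U}_{{\mathscr E},q}^{{\rm int}(K)}$, plus $\frac12\lambda_{{\mathscr E},q}$ times ${\bf U}^{{\rm int}}-\langle{\bf n},{\bf F}({\bf U}^{{\rm int}})\rangle-\langle{\bf n},{\bf B}^{{\rm ext}}-{\bf B}^{{\rm int}}\rangle{\bf S}({\bf U}^{{\rm int}})$, plus $\frac12\lambda_{{\mathscr E},q}$ times ${\bf U}^{{\rm ext}}-\langle{\bf n},{\bf F}({\bf U}^{{\rm ext}})\rangle$. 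Dotting with ${\bm\xi}^*$, distributing $p_m^*$ across the same convex decomposition, and applying the generalized LLF inequality of Lemma \ref{theo:RMHD:LLFsplitgen} with $\theta=-1$ to the two flux pieces lower bounds each of them, leaving a residual built from the source term together with $\langle{\bf n},{\bf B}\rangle({\bf v}^*\cdot{\bf B}^*)$- and $\langle{\bf n},{\bf v}^*\rangle p_m^*$-type contributions.

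The hard part — the heart of the argument — is disposing of this residual after summing over all $\mathscr E\in\partial K$ and all quadrature points $q$. Two structural facts make it collapse. First, $\sum_{\mathscr E}|{\mathscr E}|\,{\bf n}_{{\mathscr E},K}={\bf 0}$ kills the $\langle{\bf n},{\bf v}^*\rangle p_m^*$ terms. Second, the locally divergence-free property, together with the exactness of the Gauss rule ($Q=k+1$ points) on degree-$k$ polynomials, gives $\sum_{{\mathscr E},q}\lambda_{{\mathscr E},q}\langle{\bf n}_{{\mathscr E},K},{\bf B}_{{\mathscr E},q}^{{\rm int}(K)}\rangle=\frac{\Delta t}{|K|}\int_K\nabla\cdot{\bf B}_h\,d{\bf x}=0$, which lets one recast the entire residual as $-\frac12\sum_{{\mathscr E},q}\lambda_{{\mathscr E},q}\langle{\bf n}_{{\mathscr E},K},{\bf B}_{{\mathscr E},q}^{{\rm ext}(K)}-{\bf B}_{{\mathscr E},q}^{{\rm int}(K)}\rangle\big({\bf S}({\bf U}_{{\mathscr E},q}^{{\rm int}(K)})\cdot{\bm\xi}^*+{\bf v}^*\cdot{\bf B}^*\big)$. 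This is exactly the quantity estimated by Lemma \ref{lem:est_su}, whose bound carries the weight $(\rho^{{\rm int}}H^{{\rm int}})^{-1/2}$ — precisely the normalization built into $\sigma_{K,{\mathscr E},q}$ — so the residual is at least $-\sum_{{\mathscr E},q}\lambda_{{\mathscr E},q}\sigma_{K,{\mathscr E},q}\big({\bf U}_{{\mathscr E},q}^{{\rm int}(K)}\cdot{\bm\xi}^*+p_m^*\big)$. Getting this chain of cancellations and estimates to line up — the discrete shadow of the symmetrization source term offsetting the divergence error — is where essentially all the technical effort goes, and it is the step I expect to be the main obstacle.

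Collecting the pieces, $\big(\bar{\bf U}_K+\Delta t\,\widetilde{\bm{\mathcal J}}_K\big)\cdot{\bm\xi}^*+p_m^*$ is bounded below by $\sum_q\widetilde\varpi_q\big({\bf U}_h(\widetilde{\bf x}_K^{(q)})\cdot{\bm\xi}^*+p_m^*\big)+\sum_{{\mathscr E},q}\big(\varpi_{\mathscr E}^{(q)}-\lambda_{{\mathscr E},q}(1+\sigma_{K,{\mathscr E},q})\big)\big({\bf U}_{{\mathscr E},q}^{{\rm int}(K)}\cdot{\bm\xi}^*+p_m^*\big)$; every coefficient is positive precisely by the CFL condition \eqref{eq:CFL:2DMHD} with $a=1$, and every factor is positive since the relevant point values lie in $\mathcal G=\mathcal G_2$. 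For (i), the density component of ${\bf S}({\bf U})$ vanishes, so the cell-averaged density obeys a plain Lax--Friedrichs update; the same quadrature decomposition writes it as a convex combination of positive densities and of terms $\frac12\lambda_{{\mathscr E},q}D\big(1-\langle{\bf n},{\bf v}\rangle\big)$ with $|\langle{\bf n},{\bf v}\rangle|\le|{\bf v}|<1$, all positive because also $\varpi_{\mathscr E}^{(q)}-\lambda_{{\mathscr E},q}>0$ under the CFL condition. Hence $\bar{\bf U}_K+\Delta t\,\widetilde{\bm{\mathcal J}}_K\in\mathcal G$ for all $K\in\mathcal T_h$, i.e.\ ${\bf U}_h+\Delta t\,{\bf L}_h({\bf U}_h)\in\overline{\mathbb G}_h^k$, which is \eqref{eq:Lhcondition}.
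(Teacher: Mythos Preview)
Your proposal is correct and follows essentially the same approach as the paper: both reduce to verifying $\bar{\bf U}_K+\Delta t\,\widetilde{\bm{\mathcal J}}_K\in\mathcal G_2$ via the special quadrature decomposition \eqref{eq:decomposition}, Lemma~\ref{theo:RMHD:LLFsplitgen} on the Lax--Friedrichs pieces, the locally divergence-free identity ${\rm div}_K^{\rm int}{\bf B}_h=0$, and Lemma~\ref{lem:est_su} to absorb the source-term residual into the $\sigma_{K,{\mathscr E},q}$-weighted CFL condition. The only difference is organizational---the paper first splits $\bar{\bf U}_K^{\Delta t}$ globally into four pieces $\Xi_1,\dots,\Xi_4$ and bounds the corresponding $I_1,\dots,I_4$ separately, whereas you regroup per edge quadrature point before summing; the ingredients and cancellations line up identically.
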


\begin{proof}
In order to prove ${\bf U}_h + \Delta t {\bf L}_h ( {\bf U}_h ) \in \overline {\mathbb G}_h^{k}$ in \eqref{eq:Lhcondition}, 
it suffices to show 
\begin{equation}\label{WKLproof222}
\bar {\bf U}_K^{\Delta t} :=   \bar {\bf U}_K + \Delta t \widetilde {\bm {\mathcal J}}_K ( {\bf U}_h  ) \in {\mathcal G},\qquad \forall K \in {\mathcal T}_h,
\end{equation}
under the CFL type condition \eqref{eq:2D1stCFLGP} and the condition that ${\bf U}_h \in \mathbb G_h^k$. 
Substituting the formula of the numerical flux \eqref{eq:LFflux} into $\widetilde {\bm {\mathcal J}}_K^{(1)}({\bf U}_h )$, we reformulate $\widetilde {\bm {\mathcal J}}_K^{(1)}({\bf U}_h )$ and decompose it into three parts: 
\begin{align*}
\widetilde {\bm {\mathcal J}}_K^{(1)}({\bf U}_h ) &=  
\frac{ a } { 2|K| } \sum_{ {\mathscr E} \in \partial K } \left[
|{\mathscr E}| \sum_{q=1}^Q \omega_q
\left(
{\bf U}^{{\rm int}(K)}_{{\mathscr E},q} - \frac{1}{a} \left\langle  {\bf n}_{{\mathscr E},K}, {\bf F}  \big( {\bf U}^{{\rm int}(K)}_{{\mathscr E},q} \big)  \right\rangle
\right)    \right] 
\\
& \quad + \frac{ a } { 2|K| } \sum_{ {\mathscr E} \in \partial K } \left[
|{\mathscr E}| \sum_{q=1}^Q \omega_q
\left(
{\bf U}^{{\rm ext}(K)}_{{\mathscr E},q} - \frac{1}{a} \left\langle  {\bf n}_{{\mathscr E},K}, {\bf F}  \big( {\bf U}^{{\rm ext}(K)}_{{\mathscr E},q} \big)  \right\rangle
\right)    \right] 
\\
& \quad 
- \frac{ a }{ |K| } \sum_{ {\mathscr E} \in \partial K } \left( 
|{\mathscr E}| \sum_{q=1}^Q \omega_q
{\bf U}^{{\rm int}(K)}_{{\mathscr E},q} 
\right)  =: \widetilde {\bm {\mathcal J}}_K^{(1,1)} + \widetilde {\bm {\mathcal J}}_K^{(1,2)} + \widetilde {\bm {\mathcal J}}_K^{(1,3)}.
\end{align*}
Then $\bar {\bf U}_K^{\Delta t}$ can be rewritten as 
\begin{equation}\label{eq:Decompose}
\bar {\bf U}_K^{\Delta t} = {\bf \Xi}_1 + {\bf \Xi}_2 + {\bf \Xi}_3 + {\bf \Xi}_4,
\end{equation}
with , ${\bf \Xi}_i:= \Delta t \widetilde {\bm {\mathcal J}}_K^{(1,i)}$, $i=1,2$, ${\bf \Xi}_3: = \bar {\bf U}_K + \Delta t \widetilde {\bm {\mathcal J}}_K^{(1,3)} $ and 
$$
{\bf \Xi}_4 := \Delta t \widetilde {\bm {\mathcal J}}_K^{(2)} = \frac{\Delta t}{2|K|}  \sum_{ {\mathscr E} \in \partial K } \left[ 
|{\mathscr E}| \sum_{q=1}^Q \omega_q  \left \langle {\bf n}_{{\mathscr E},K}, {\bf B}^{{\rm int}(K)}_{{\mathscr E},q} - {\bf B}^{{\rm ext}(K)}_{{\mathscr E},q} \right \rangle 
{\bf S} \left({\bf U}^{{\rm int}(K)}_{{\mathscr E},q} \right) \right].
$$
Since ${\mathcal G}={\mathcal G}_2$ as shown in Lemma \ref{theo:RMHD:CYcondition:VecN}, 
it remains to prove  
$\bar {\bf U}_K^{\Delta t} \in {\mathcal G}_2$,~$\forall K \in {\mathcal T}_h$.

We fist show $\bar D_K^{\Delta t}>0$.  
Because ${\bf U}_h \in \mathbb G_h^k$ and $\widehat {\mathbb S}_K \subset \mathbb S_K$, we have 
${\bf U}^{{\rm int}(K)}_{{\mathscr E},q} \in {\mathcal G}$ and ${\bf U}^{{\rm ext}(K)}_{{\mathscr E},q} \in {\mathcal G}$ for all $1\le q\le Q$, ${\mathscr E} \in \partial K$ and 
$K \in {\mathcal T}_h$. 
Note that the first component of 
${\bf U}^{{\rm int}(K)}_{{\mathscr E},q} - \frac{1}{a} \big\langle  {\bf n}_{{\mathscr E},K}, {\bf F}  \big( {\bf U}^{{\rm int}(K)}_{{\mathscr E},q} \big)  \big\rangle$ 
 equals   
$D^{{\rm int}(K)}_{{\mathscr E},q} \big( 1 - \frac{1}{a} \big\langle  {\bf n}_{{\mathscr E},K},  {\bf v}^{{\rm int}(K)}_{{\mathscr E},q}  \big\rangle \big) 
\ge D^{{\rm int}(K)}_{{\mathscr E},q} \big( 1 - \frac1{a} \big|  {\bf v}^{{\rm int}(K)}_{{\mathscr E},q} \big| \big)>0$, which implies that the first component of ${\bf \Xi}_1$ is positive. 
Similarly, we know that the first component of ${\bf \Xi}_2$ is also positive. 
Notice that the first component of ${\bf \Xi}_4$ is zero. 
Therefore, the first component of $\bar {\bf U}_K^{\Delta t}$ is larger than that of ${\bf \Xi}_3$. It gives   
\begin{align*}
\bar D_K^{\Delta t} &> \bar D_K -  \frac{ a \Delta t }{ |K| } \sum_{ {\mathscr E} \in \partial K } \left( 
|{\mathscr E}| \sum_{q=1}^Q \omega_q
D^{{\rm int}(K)}_{{\mathscr E},q} 
\right)
\\
& = \sum_{{\mathscr E} \in \partial K} \sum_{q=1}^Q \varpi_{{\mathscr E}}^{(q)} D^{{\rm int}(K)}_{{\mathscr E},q}  + \sum_{ q=1 }^{\widetilde Q} \widetilde \varpi_q   
D_h^{{\rm int}(K)} ( \widetilde {\bf x}_K^{(q)} ) -  \frac{ a \Delta t }{ |K| } \sum_{ {\mathscr E} \in \partial K } \left( 
|{\mathscr E}| \sum_{q=1}^Q \omega_q
D^{{\rm int}(K)}_{{\mathscr E},q} 
\right)
\\
& \ge \sum_{{\mathscr E} \in \partial K} \sum_{q=1}^Q \left[ \omega_q D^{{\rm int}(K)}_{{\mathscr E},q}  
\left( 
\frac{ \varpi_{{\mathscr E}}^{(q)}  }{ \omega_q }
- a \Delta t \frac{ |{\mathscr E}| }{ |K| }
\right) \right] \ge 0,
\end{align*}
where we have used in the above equality the exactness of the quadrature rule \eqref{eq:decomposition} for polynomials of degree up to $k$,  and in the last inequality the condition 
\eqref{eq:CFL:2DMHD}. 

We then prove that $\bar {\bf U}_K^{\Delta t}  \cdot
{{ {\bm \xi}^*}} + {p^*_m} >0$ for any auxiliary variables ${\bf B}^* \in \mathbb R^3 $ and ${\bf v}^* \in \mathbb B_1({\bf 0})$. It follows from \eqref{eq:Decompose} that 
\begin{equation}\label{WKLproof33}
\bar {\bf U}_K^{\Delta t} \cdot
 {\bm \xi}^* + {p^*_m} = I_1 + I_2 + I_3 + I_4,
\end{equation}
with $I_1:={\bf \Xi }_1 \cdot {\bm \xi}^* + \eta $, $I_2:={\bf \Xi }_1 \cdot {\bm \xi}^* + \eta$, 
$I_3:= {\bf \Xi }_3 \cdot {\bm \xi}^* + p^*_m - 2\eta $, $I_4 := {\bf \Xi }_4 \cdot {\bm \xi}^*$, and $\eta := \frac{ a \Delta t }{ 2 |K| } \sum _{ {\mathscr E} \in \partial K } 
|{\mathscr E}| p_m^* $. 
We now estimate the lower bounds of $I_i$ for $1\le i \le 4$ respectively. 
Using Lemma \ref{theo:RMHD:LLFsplitgen}, we deduce that 
\begin{align} \nonumber
I_1 & = \frac{ a \Delta t } { 2|K| } \sum_{ {\mathscr E} \in \partial K } \left\{ 
|{\mathscr E}| \sum_{q=1}^Q \omega_q \left[
\left(
{\bf U}^{{\rm int}(K)}_{{\mathscr E},q} - \frac{1}{a} \left\langle  {\bf n}_{{\mathscr E},K}, {\bf F}  \big( {\bf U}^{{\rm int}(K)}_{{\mathscr E},q} \big)  \right\rangle
\right) \cdot {\bm \xi}^*  + p_m^*  \right] \right\}
\\ \nonumber
& \ge \frac{ a \Delta t } { 2|K| } \sum_{ {\mathscr E} \in \partial K } \left\{ 
|{\mathscr E}| \sum_{q=1}^Q \omega_q  \left[ \frac1{a} \Big(
\left \langle {\bf n}_{ {\mathscr E}, K }, {\bf v}^* \right \rangle p_m^* - 
\left \langle {\bf n}_{ {\mathscr E}, K }, {\bf B}^{{\rm int}(K)}_{{\mathscr E},q} \right \rangle ( {\bf v}^* \cdot {\bf B}^* )
 \Big) \right] \right\}
 \\ \nonumber
& = \frac{ \Delta t } { 2|K| } \sum_{ {\mathscr E} \in \partial K } \left\{ 
|{\mathscr E}| \sum_{q=1}^Q \omega_q   \Big( - 
\left \langle {\bf n}_{ {\mathscr E}, K }, {\bf B}^{{\rm int}(K)}_{{\mathscr E},q} \right \rangle ( {\bf v}^* \cdot {\bf B}^* )
\Big) \right\}
\\ \label{eq:I1bound}
& = - \frac{  \Delta t  ( {\bf v}^* \cdot {\bf B}^* ) } { 2|K| } \sum_{ {\mathscr E} \in \partial K } 
\int_{ {\mathscr E} } \left \langle {\bf n}_{ {\mathscr E}, K }, {\bf B}^{{\rm int}(K)}_h \right \rangle d s 
=: - \frac{  \Delta t  ( {\bf v}^* \cdot {\bf B}^* ) } { 2|K| } {\rm div}_K^{\rm int} {\bf B}_h,
\end{align}
where we have used the exactness of the $Q$-point quadrature rule on 
each interface for polynomials of degree up to $k$. 
Similarly, we obtain 
\begin{equation}\label{eq:I2bound}
I_2 \ge - \frac{  \Delta t  ( {\bf v}^* \cdot {\bf B}^* ) } { 2|K| } \sum_{ {\mathscr E} \in \partial K } 
\int_{ {\mathscr E} } \left \langle {\bf n}_{ {\mathscr E}, K }, {\bf B}^{{\rm ext}(K)}_h \right \rangle d s =: - \frac{  \Delta t  ( {\bf v}^* \cdot {\bf B}^* ) } { 2|K| } {\rm div}_K^{\rm ext} {\bf B}_h. 
\end{equation}
Note  
$
I_3 = \bar{\bf U}_K \cdot {\bm \xi}^* + p_m^* - \frac{ a \Delta t }{ |K| } \sum_{ {\mathscr E} \in \partial K } \left( 
|{\mathscr E}| \sum_{q=1}^Q \omega_q
\Big( {\bf U}^{{\rm int}(K)}_{{\mathscr E},q} \cdot {\bm \xi}^* + p_m^*   \Big)
\right). 
$ Based on the exactness of the quadrature rule 
\eqref{eq:decomposition} for polynomials of degree up to $k$, one has 
\begin{align*}
\bar{\bf U}_K \cdot {\bm \xi}^* + p_m^* & = \sum_{{\mathscr E} \in \partial K} \sum_{q=1}^Q \varpi_{{\mathscr E}}^{(q)}
 \left( {\bf U}^{{\rm int}(K)}_{{\mathscr E},q} \cdot {\bm \xi}^* + p_m^* \right) + \sum_{ q=1 }^{\widetilde Q} \widetilde \varpi_q   
\left( {\bf U}_h^{{\rm int}(K)} ( \widetilde {\bf x}_K^{(q)} ) \cdot {\bm \xi}^* + p_m^* \right)
\\
& \ge \sum_{{\mathscr E} \in \partial K} \sum_{q=1}^Q \varpi_{{\mathscr E}}^{(q)}
\left( {\bf U}^{{\rm int}(K)}_{{\mathscr E},q} \cdot {\bm \xi}^* + p_m^* \right),
\end{align*}
where ${\bf U}_h^{{\rm int}(K)} ( \widetilde {\bf x}_K^{(q)} )  \in {\mathcal G}={\mathcal G}_2$ has been used. 
It follows that 
\begin{equation}\label{eq:I3bound}
I_3 \ge \sum_{{\mathscr E} \in \partial K} \sum_{q=1}^Q 
\left( {\bf U}^{{\rm int}(K)}_{{\mathscr E},q} \cdot {\bm \xi}^* + p_m^* \right) \left( 
 \varpi_{{\mathscr E}}^{(q)} - a \Delta t \omega_q  \frac{ |{\mathscr E}| }{ |K| }
 \right).
\end{equation}
Thanks to the inequality \eqref{eq:widelyusedWU_EQ} constructed in Lemma \ref{lem:est_su}, we have 
\begin{equation*}
b (  {\bf S}({\bf U}) \cdot {\bm \xi}^*)  \ge - b(  {\bf v}^* \cdot  {\bf B}^* ) - \frac{|b|}{\sqrt{\rho H}} \left( {\bf U} \cdot {\bm \xi}^* + p_m^* \right), \qquad \forall b \in \mathbb R,~~\forall {\bf U} \in {\mathcal G}.
\end{equation*}
It follows that 
\begin{equation*}
\begin{aligned}
&  \left \langle {\bf n}_{{\mathscr E},K}, {\bf B}^{{\rm int}(K)}_{{\mathscr E},q} - {\bf B}^{{\rm ext}(K)}_{{\mathscr E},q} \right \rangle 
{\bf S} \big({\bf U}^{{\rm int}(K)}_{{\mathscr E},q} \big) \cdot {\bm \xi}^* 
 \ge   \left \langle {\bf n}_{{\mathscr E},K}, {\bf B}^{{\rm ext}(K)}_{{\mathscr E},q} - {\bf B}^{{\rm int}(K)}_{{\mathscr E},q} \right \rangle  (  {\bf v}^* \cdot {\bf B}^*  )
\\
& \qquad \qquad   -   \left(   \rho_{{\mathscr E},q}^{{\rm int}(K)}   H_{{\mathscr E},q}^{{\rm int}(K)} \right)^{-\frac12}   \left| \left \langle {\bf n}_{{\mathscr E},K}, {\bf B}^{{\rm int}(K)}_{{\mathscr E},q} - {\bf B}^{{\rm ext}(K)}_{{\mathscr E},q} \right \rangle \right| 
\left(  {\bf U}^{{\rm int}(K)}_{{\mathscr E},q}  \cdot {\bm \xi}^*  
+ p_m^*
 \right).   
\end{aligned}
\end{equation*}
Let $I_5 := - \frac{\Delta t}{|K|}  \sum_{ {\mathscr E} \in \partial K } 
\sum_{q=1}^Q \omega_q 
\sigma_{K,{\mathscr E},q}
\big(  {\bf U}^{{\rm int}(K)}_{{\mathscr E},q}  \cdot {\bm \xi}^*  
+ p_m^*
\big)$. We then obtain a lower bound for $I_4$: 
\begin{align} \nonumber
I_4 & \ge   \frac{\Delta t}{2|K|}  \sum_{ {\mathscr E} \in \partial K } \left[
|{\mathscr E}| \sum_{q=1}^Q \omega_q \left \langle {\bf n}_{{\mathscr E},K}, {\bf B}^{{\rm ext}(K)}_{{\mathscr E},q} - {\bf B}^{{\rm int}(K)}_{{\mathscr E},q} \right \rangle  (  {\bf v}^* \cdot {\bf B}^*  ) \right] + I_5
\\ \label{eq:I4bound}
&=\frac{  \Delta t  ( {\bf v}^* \cdot {\bf B}^* ) } { 2|K| } \left(  {\rm div}_K^{\rm ext} {\bf B}_h - {\rm div}_K^{\rm int} {\bf B}_h  \right) + I_5. 
\end{align}
Thanks to the locally divergence-free 
property of  
${\bf B}_h ({\bf x}) $, we have 
\begin{equation}\label{eq:LDF}
{\rm div}_K^{\rm int} {\bf B}_h = \sum_{ {\mathscr E} \in \partial K } \int_{ {\mathscr E} } \left \langle {\bf n}_{ {\mathscr E}, K }, {\bf B}^{{\rm int}(K)}_h \right \rangle d s  = \int_K  \nabla \cdot {\bf B}_h^{ {\rm int}(K) } ( {\bf x}) d {\bf x} = 0,
\end{equation}
where Green's theorem has been used. Combining the estimates \eqref{eq:I1bound}--\eqref{eq:I4bound} and using \eqref{eq:LDF} and \eqref{WKLproof33}, 
we obtain 
\begin{align*}
& \bar {\bf U}_K^{\Delta t} \cdot
{\bm \xi}^* + {p^*_m}  \ge \sum_{{\mathscr E} \in \partial K} \sum_{q=1}^Q 
\left( {\bf U}^{{\rm int}(K)}_{{\mathscr E},q} \cdot {\bm \xi}^* + p_m^* \right) \left( 
\varpi_{{\mathscr E}}^{(q)} - a \Delta t \omega_q  \frac{ |{\mathscr E}| }{ |K| }
\right) + I_5
\\
& \quad  = \sum_{{\mathscr E} \in \partial K} \sum_{q=1}^Q  \omega_q
\left( {\bf U}^{{\rm int}(K)}_{{\mathscr E},q} \cdot {\bm \xi}^* + p_m^* \right) \left[
\frac{\varpi_{{\mathscr E}}^{(q)} }{ \omega_q } - \Delta t \frac{| {\mathscr E} |}{|K|} 
( a + \sigma_{K,{\mathscr E},q} )
\right] > 0,
\end{align*}
where the condition \eqref{eq:CFL:2DMHD} has been used in the last inequality. 
Therefore, we have  
$$
\bar {\bf U}_K^{\Delta t}  \cdot
{{ {\bm \xi}^*}} + {p^*_m} >0, \qquad \forall {\bf B}^* \in \mathbb R^3,~\forall {\bf v}^* \in \mathbb B_1({\bf 0}),
$$
which, along with $\bar D_K^{\Delta t}>0$, yield $\bar {\bf U}_K^{\Delta t} \in {\mathcal G}_2 = {\mathcal G}$. 
The proof is complete. 
\end{proof}

\begin{remark}
The quantities ${\rm div}_K^{\rm int} {\bf B}_h$ 
and ${\rm div}_K^{\rm ext} {\bf B}_h$, defined in the lower bounds in \eqref{eq:I1bound} and 
\eqref{eq:I2bound} respectively, denote 
 discrete divergence. They are also defined in \cite{WuTangM3AS} to quantify the 
influence of divergence error on the PCP property of the standard DG schemes, 
for which the discrete divergence-free condition ${\rm div}_K^{\rm ext} {\bf B}_h={\rm div}_K^{\rm int} {\bf B}_h=0$ is required.  
However, the present DG schemes are proven to be PCP without requiring such discrete divergence-free condition, 
thanks to two key ingredients:  
the locally divergence-free DG element and a 
suitable discretization of the source term in the symmetrizable RMHD system \eqref{ModRMHD} which gives ${\mathcal J}_K^{(2)} ( {\bf U}_h, {\bf u}  )$ in \eqref{eq:2DDGUh-weak}.
The former leads to zero divergence within each cell, so that  
the term 
${\rm div}_K^{\rm int} {\bf B}_h$ vanishes. 
The latter brings some new divergence terms, as shown in the lower bound in 
\eqref{eq:I4bound}, which exactly offset the divergence term in \eqref{eq:I2bound}. 
	In other words, these key ingredients help 
	eliminate the effect of divergence error on 
	the PCP property. 
	This feature is similar to the continuous case 
	that the inclusion of 
	source ${\bf S} ({\bf U})  \nabla \cdot {\bf B} $ makes  
	the modified RMHD system \eqref{ModRMHD} 
	able to retain the PCP property even if the magnetic field is not divergence-free. 
	Again, these findings indicate the unity of discrete and continuous objects. 
\end{remark}

For the first-order DG method ($k=0$), we have ${\bf U}_h \big|_K ({\bf x}) \equiv  \bar {\bf U}_K$ and ${\mathbb G}_h^k = \overline {\mathbb G}_h^k$ so that the PCP and weak PCP properties are equivalent in this case, 
and the PCP property can be proven under a sharper CFL condition as shown in Theorem \ref{thm:P0}. 

\begin{theorem}\label{thm:P0}
	For the first-order version ($k=0$) of the semi-discrete DG scheme \eqref{eq:2DDGUh-weak} or \eqref{eq:ODE}, the PCP property \eqref{eq:Lhcondition} holds under the following CFL type condition 
	\begin{equation}\label{eq:2D1stCFLGP}
	\Delta t \left( \frac{a}{|K|} 
	\sum_{ {\mathscr E} \in \partial K }   \big| {\mathscr E} \big|
	+ \frac{ \left|{\rm div} _{K}  {\bf B}_h\right| }{ \sqrt{\bar \rho_K \bar H_K } } \right) < 1,\quad \forall K \in {\mathcal T}_h, 
	\end{equation}
	where ${\rm div} _{K} {\bf B}_h$ denotes a discrete divergence of ${\bf B}_h$ on the cell $K$ defined by 
	\begin{equation}\label{eq:DefDisDivB}
	\mbox{\rm div} _{K}  {\bf B}_h := \frac{1}{|K|}
	\sum_{{\mathscr E} \in \partial K} \big| {\mathscr E} \big|
	\left\langle {\bf n}_{{\mathscr E},K},  
	\frac{  \bar {\bf B}_K  +  \bar {\bf B}_{ K_{\mathscr E} } }{
		2
	} \right\rangle,
	\end{equation} 
	with $K_{\mathscr E}$ denoting the adjacent cell that shares the edge ${\mathscr E}$ with the cell $K$. In \eqref{eq:2D1stCFLGP}--\eqref{eq:DefDisDivB}, the notations $\bar \rho_K$, 
	$\bar H_K$ and $\bar {\bf B}_K$ denote the rest-mass density, specific enthalpy, 
	and magnetic field corresponding to $\bar {\bf U}_K$, respectively.
\end{theorem}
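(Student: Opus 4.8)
The plan is to specialize the argument of Theorem~\ref{thm:Pkge1} to the piecewise-constant setting, where everything collapses and a sharper bound becomes available. For $k=0$ we have ${\bf U}_h\big|_K\equiv\bar{\bf U}_K$ and ${\mathbb G}_h^0=\overline{\mathbb G}_h^0$, so \eqref{eq:Lhcondition} reduces to showing $\bar{\bf U}_K^{\Delta t}:=\bar{\bf U}_K+\Delta t\,\widetilde{\bm{\mathcal J}}_K({\bf U}_h)\in{\mathcal G}$ for every cell $K$, given $\bar{\bf U}_{K'}\in{\mathcal G}$ for all $K'$. I would insert the Lax--Friedrichs flux \eqref{eq:LFflux} (with $a=1$) into $\widetilde{\bm{\mathcal J}}_K^{(1)}$ and split it exactly as in \eqref{eq:Decompose}, so that $\bar{\bf U}_K^{\Delta t}={\bf\Xi}_1+{\bf\Xi}_2+{\bf\Xi}_3+{\bf\Xi}_4$, where ${\bf\Xi}_1,{\bf\Xi}_2$ collect the interior/exterior ``$\bar{\bf U}-a^{-1}\langle{\bf n},{\bf F}\rangle$'' contributions, ${\bf\Xi}_3=\big(1-\frac{a\Delta t}{|K|}\sum_{{\mathscr E}\in\partial K}|{\mathscr E}|\big)\bar{\bf U}_K$, and ${\bf\Xi}_4=\Delta t\,\widetilde{\bm{\mathcal J}}_K^{(2)}$. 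The crucial simplification is that the interior trace is the same on every edge, so the factor ${\bf S}(\bar{\bf U}_K)$ pulls out of $\widetilde{\bm{\mathcal J}}_K^{(2)}$; using $\sum_{{\mathscr E}\in\partial K}|{\mathscr E}|{\bf n}_{{\mathscr E},K}={\bf 0}$ and the definition \eqref{eq:DefDisDivB} of the discrete divergence, this yields the clean identity ${\bf\Xi}_4=-\Delta t\,({\rm div}_K{\bf B}_h)\,{\bf S}(\bar{\bf U}_K)$.

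Next I would verify the two defining constraints of ${\mathcal G}={\mathcal G}_2$ (Lemma~\ref{theo:RMHD:CYcondition:VecN}) for $\bar{\bf U}_K^{\Delta t}$. Positivity of $\bar D_K^{\Delta t}$ follows as in Theorem~\ref{thm:Pkge1}: since $a=1$ bounds all admissible velocities, the first components of ${\bf\Xi}_1$ and ${\bf\Xi}_2$ are positive, the first component of ${\bf\Xi}_4$ vanishes because ${\bf S}$ has zero first entry, and hence $\bar D_K^{\Delta t}>\big(1-\frac{a\Delta t}{|K|}\sum_{\mathscr E}|{\mathscr E}|\big)\bar D_K\ge 0$ under \eqref{eq:2D1stCFLGP}. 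For the second constraint I would fix arbitrary ${\bf B}^*\in{\mathbb R}^3$ and ${\bf v}^*\in\mathbb B_1({\bf 0})$, set $\eta:=\frac{a\Delta t}{2|K|}\sum_{\mathscr E}|{\mathscr E}|\,p_m^*$, and write $\bar{\bf U}_K^{\Delta t}\cdot{\bm\xi}^*+p_m^*=I_1+I_2+I_3+I_4$ with $I_1:={\bf\Xi}_1\cdot{\bm\xi}^*+\eta$, $I_2:={\bf\Xi}_2\cdot{\bm\xi}^*+\eta$, $I_3:={\bf\Xi}_3\cdot{\bm\xi}^*+p_m^*-2\eta$, $I_4:={\bf\Xi}_4\cdot{\bm\xi}^*$, and estimate these as in \eqref{eq:I1bound}--\eqref{eq:I4bound}: Lemma~\ref{theo:RMHD:LLFsplitgen} with $\theta=-1$ gives $I_1\ge 0$ (its divergence remainder vanishes because $\sum_{\mathscr E}|{\mathscr E}|{\bf n}_{{\mathscr E},K}={\bf 0}$) and $I_2\ge-\Delta t\,({\bf v}^*\cdot{\bf B}^*)\,{\rm div}_K{\bf B}_h$; the cell-average part gives $I_3=\big(1-\frac{a\Delta t}{|K|}\sum_{\mathscr E}|{\mathscr E}|\big)\big(\bar{\bf U}_K\cdot{\bm\xi}^*+p_m^*\big)$; and Lemma~\ref{lem:est_su} applied to the single state $\bar{\bf U}_K$ gives $I_4\ge\Delta t\,({\bf v}^*\cdot{\bf B}^*)\,{\rm div}_K{\bf B}_h-\frac{\Delta t\,|{\rm div}_K{\bf B}_h|}{\sqrt{\bar\rho_K\bar H_K}}\big(\bar{\bf U}_K\cdot{\bm\xi}^*+p_m^*\big)$.

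Summing these bounds, the two terms $\pm\Delta t\,({\bf v}^*\cdot{\bf B}^*)\,{\rm div}_K{\bf B}_h$ from $I_2$ and $I_4$ cancel exactly, leaving
$$\bar{\bf U}_K^{\Delta t}\cdot{\bm\xi}^*+p_m^*\ \ge\ \left(1-\Delta t\left(\frac{a}{|K|}\sum_{{\mathscr E}\in\partial K}|{\mathscr E}|+\frac{|{\rm div}_K{\bf B}_h|}{\sqrt{\bar\rho_K\bar H_K}}\right)\right)\big(\bar{\bf U}_K\cdot{\bm\xi}^*+p_m^*\big)\ >\ 0,$$
the strict inequality being forced by \eqref{eq:2D1stCFLGP} together with $\bar{\bf U}_K\in{\mathcal G}_2$. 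Combined with $\bar D_K^{\Delta t}>0$ this gives $\bar{\bf U}_K^{\Delta t}\in{\mathcal G}_2={\mathcal G}$, as desired. I do not expect a genuinely new difficulty beyond Theorem~\ref{thm:Pkge1}; the only delicate point is the bookkeeping that produces the \emph{sharper} CFL condition \eqref{eq:2D1stCFLGP}: for $k=0$ the cell-average contribution can be kept as a single whole factor $\big(1-\frac{a\Delta t}{|K|}\sum_{\mathscr E}|{\mathscr E}|\big)\big(\bar{\bf U}_K\cdot{\bm\xi}^*+p_m^*\big)$ instead of being split across the quadrature-partition weights used in \eqref{eq:decomposition}, the constancy of $\bar{\bf U}_K$ lets ${\bf S}$ factor out so that the source contribution to the cell average is governed by exactly the single discrete divergence \eqref{eq:DefDisDivB}, and the two opposite-signed divergence terms then cancel, so that the only surviving penalty is $|{\rm div}_K{\bf B}_h|/\sqrt{\bar\rho_K\bar H_K}$.
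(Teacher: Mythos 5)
Your proposal is correct and is essentially the proof the paper has in mind: the paper omits the argument for Theorem \ref{thm:P0}, stating only that it is ``similar to that of Theorem \ref{thm:Pkge1}'', and your write-up is precisely that specialization, with the decomposition \eqref{eq:Decompose}, Lemmas \ref{lem:est_su} and \ref{theo:RMHD:LLFsplitgen}, and the cancellation of the two divergence terms all deployed as in the $k\ge 1$ case. Your identification of ${\bf \Xi}_4=-\Delta t\,({\rm div}_K{\bf B}_h)\,{\bf S}(\bar{\bf U}_K)$ via $\sum_{{\mathscr E}\in\partial K}|{\mathscr E}|\,{\bf n}_{{\mathscr E},K}={\bf 0}$ and the resulting unsplit factor $\bigl(1-\tfrac{a\Delta t}{|K|}\sum_{{\mathscr E}}|{\mathscr E}|\bigr)$ correctly account for the sharper CFL condition \eqref{eq:2D1stCFLGP}.
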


\begin{proof}
The proof is similar to that of Theorem \ref{thm:Pkge1} and is thus omitted.  
\end{proof}

\subsection{The PCP limiting operator ${\Pi}_h$}\label{sec:PCPlimiter}
We now present the PCP limiting operator 
$
{\bf \Pi}_h:  \overline {\mathbb G}_h^{k}  \longrightarrow  {\mathbb G}_h^{k} 
$, which limits the numerical solutions from $\overline {\mathbb G}_h^{k}$ to $ {\mathbb G}_h^{k}$ via a simple scaling PCP limiter  
\cite{WuTangM3AS} as extension of the positivity-preserving limiter \cite{zhang2010b}. 
For any ${\bf U}_h \in \overline {\mathbb G}_h^{k}$, we construct the limited solution $ {\bf \Pi}_h {\bf U}_h=: \widetilde {\bf U}_h \in  {\mathbb G}_h^{k} $ as follows.

Let ${\bf U}_h \big|_{K} =: {\bf U}_K({\bf x})$. Note $\bar {\bf U}_K \in {\mathcal G}={\mathcal G}_1,~\forall K\in {\mathcal T}_h$. 
To avoid the effect of the rounding error, we introduce a sufficiently small positive number $\epsilon$ such that 
$ \bar {\vec U}_K \in {\mathcal G}_\epsilon$ for all $K \in {\mathcal T}_h$, where
$
{\mathcal G}_\epsilon = \left\{   \vec U=(D,\vec m,\vec B,E)^{\top}:~  D\ge\epsilon,~q(\vec U)\ge\epsilon,~{\Psi}_\epsilon(\vec U) \ge 0\right\}
$ is a convex set \cite{WuTangM3AS}, 
with
$
{\Psi}_\epsilon(\vec U) : = {\Psi}(\vec U_\epsilon)$ and $\vec U_\epsilon : = \big(D,\vec m,\vec B,E-\epsilon\big)^{\top}.
$
For each $K$, to construct $\widetilde {\bf U}_K({\bf x}):= \widetilde {\bf U}_h \big|_{K}$, we proceed as follows.  
First, we define $\widehat {\vec U}_K({\bf x}) :=  \big( \widehat D_K({\bf x}), \vec m_K({\bf x}),\vec B_K({\bf x}), E_K({\bf x}) \big)^{\top}$ with 
$
\widehat D_K({\bf x}) = \theta_1 \big( D_K({\bf x}) - \bar D_K \big) + \bar D_K$, and $ 
\theta_1 = \min \big \{ 1, ({ \bar D_K - \epsilon })/\big( { \bar D_K - \min \limits_{ {\bf x} \in {\mathbb S}_K}^{} D_K ( {\bf x} ) } \big)  \big \}.
$
Then, we define $\check {\vec U}_K({\bf x}): = \big(  \theta_2 ( \widehat {D}_K ({\bf x}) - \bar {D}_K) + \bar {D}_K,~
\theta_2 \big(  {\vec m}_K ({\bf x}) - \bar {\vec m}_K \big) + \bar {\vec m}_K,~ {\vec B}_K ({\bf x}),~
\theta_2 \big(  {E}_K ({\bf x}) - \bar {E}_K \big) + \bar {E}_K \big)^\top$ 
with 
$\theta_2 = \min \big \{ 1, \big( q(\bar {\bf U}_K) - \epsilon \big) /\big( { q(\bar {\bf U}_K) - \min \limits_{ {\bf x} \in {\mathbb S}_K}^{}   q(\check {\bf U}_K ({\bf x}) ) } \big)  \big \}$. 
Finally, we define 
\begin{equation}\label{eq:PCPpolynomial}
\widetilde {\vec U}_K({\bf x}) = \theta_3 \big( \check {\vec U}_K ({\bf x}) - \bar {\vec U}_K \big) + \bar {\vec U}_K,
\end{equation}
where $\theta_3 = \min \limits_{{\bf x}\in {\mathbb S}_K} \tilde  \theta({\bf x})$. Here $\tilde  \theta({\bf x})=1$ if ${\Psi}_\epsilon ( \check {\vec U}_K({\bf x}) ) \ge 0$; otherwise
 $\tilde \theta({\bf x}) \in [0,1) $ solves 
$
{\Psi}_\epsilon \big( (1- \tilde  \theta) \bar{\vec U}_K + \tilde  \theta \check {\vec U}_K({\bf x}) \big) =0,
$ 
which has a unique solution for the unknown $\tilde  \theta \in [0,1)$. 

\begin{lemma}\label{lem:Pih}
For any ${\bf U}_h \in \overline {\mathbb G}_h^{k}$, one has $ {\bf \Pi}_h {\bf U}_h = \widetilde {\bf U}_h \in  {\mathbb G}_h^{k}$.
\end{lemma}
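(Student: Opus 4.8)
The plan is to verify, one by one, the defining requirements of $\mathbb G_h^k$ in \eqref{eq:Ghk} for $\widetilde{\bf U}_h := {\bf \Pi}_h {\bf U}_h$: (a) $\widetilde{\bf U}_h\in\mathbb W_h^k$; (b) the cell averages of $\widetilde{\bf U}_h$ equal those of ${\bf U}_h$, which yields \eqref{eq:conPCP} and, since ${\bf U}_h\in\overline{\mathbb G}_h^k$, keeps the averages in $\mathcal G$, so $\widetilde{\bf U}_h\in\overline{\mathbb G}_h^k$; and (c) $\widetilde{\bf U}_K({\bf x})\in\mathcal G$ for every ${\bf x}\in\mathbb S_K$ and every $K\in\mathcal T_h$. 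Items (a) and (b) are structural. Each of the three scalings replaces a component polynomial by an affine combination of degree-$\le k$ polynomials, so $\widetilde{\bf U}_K\in[\mathbb P^k(K)]^8$; the magnetic field is left untouched by the $\theta_1$- and $\theta_2$-steps and becomes $\theta_3{\bf B}_K({\bf x})+(1-\theta_3)\bar{\bf B}_K$ after the $\theta_3$-step --- a convex combination of a locally divergence-free polynomial and a constant, hence itself locally divergence-free --- so $\widetilde{\bf U}_h\in\mathbb W_h^k$. Moreover the $\theta_1$-, $\theta_2$- and $\theta_3$-scalings are affine maps that fix $\bar D_K$, the triple $(\bar D_K,\bar{\bf m}_K,\bar E_K)$, and $\bar{\bf U}_K$ respectively, so $\frac1{|K|}\int_K\widetilde{\bf U}_K\,d{\bf x}=\bar{\bf U}_K\in\mathcal G_\epsilon\subset\mathcal G$, using the choice of $\epsilon$ and the inclusion $\mathcal G_\epsilon\subseteq\mathcal G$ from \cite{WuTangM3AS}.

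For (c) it suffices to show $\widetilde{\bf U}_K({\bf x})\in\mathcal G_\epsilon$ on $\mathbb S_K$, i.e.\ $\widetilde D_K({\bf x})\ge\epsilon$, $q(\widetilde{\bf U}_K({\bf x}))\ge\epsilon$, and $\Psi_\epsilon(\widetilde{\bf U}_K({\bf x}))\ge0$; the first two are secured by the $\theta_1$- and $\theta_2$-steps. The $\theta_1$-step gives $\widehat D_K({\bf x})\ge\epsilon$ on $\mathbb S_K$ by the standard cut-off estimate: if $\min_{\mathbb S_K}D_K\ge\epsilon$ then $\theta_1=1$, and otherwise $\widehat D_K({\bf x})=\bar D_K-\theta_1\bigl(\bar D_K-D_K({\bf x})\bigr)\ge\bar D_K-(\bar D_K-\epsilon)=\epsilon$ since $\bar D_K\ge\epsilon$. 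The $\theta_2$-step exploits that $q({\bf U})=E-\sqrt{D^2+|{\bf m}|^2}$ is \emph{concave} in ${\bf U}$ and independent of ${\bf B}$: the $(D,{\bf m},E)$-components of $\check{\bf U}_K({\bf x})$ form the convex combination $\theta_2\bigl(\widehat D_K({\bf x}),{\bf m}_K({\bf x}),E_K({\bf x})\bigr)+(1-\theta_2)\bigl(\bar D_K,\bar{\bf m}_K,\bar E_K\bigr)$, so concavity gives $q(\check{\bf U}_K({\bf x}))\ge\theta_2\,q(\widehat{\bf U}_K({\bf x}))+(1-\theta_2)\,q(\bar{\bf U}_K)\ge\epsilon$, the last inequality following from $q(\bar{\bf U}_K)\ge\epsilon$ and the definition of $\theta_2$, exactly as in \cite{zhang2010b,WuTangM3AS}; similarly the $D$-component of $\check{\bf U}_K({\bf x})$ is $\ge\theta_2\epsilon+(1-\theta_2)\bar D_K\ge\epsilon$. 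Hence $\check{\bf U}_K({\bf x})$ already meets the $D$- and $q$-constraints of $\mathcal G_\epsilon$ on $\mathbb S_K$.

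The main step is the $\theta_3$-scaling, which enforces $\Psi_\epsilon\ge0$; here I would use the convexity of $\mathcal G_\epsilon$ \cite{WuTangM3AS}. Fix ${\bf x}\in\mathbb S_K$ and set ${\bf U}(\theta):=(1-\theta)\bar{\bf U}_K+\theta\,\check{\bf U}_K({\bf x})$, $\theta\in[0,1]$. Along this segment $D$ is affine and $q$ is concave, so --- both being $\ge\epsilon$ at $\theta=0$ and $\theta=1$ --- they remain $\ge\epsilon$ throughout $[0,1]$; consequently, for $\theta\in[0,1]$, ${\bf U}(\theta)\in\mathcal G_\epsilon$ if and only if $\Psi_\epsilon({\bf U}(\theta))\ge0$. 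Since ${\bf U}(0)=\bar{\bf U}_K\in\mathcal G_\epsilon$ and $\mathcal G_\epsilon$ is convex and closed, $\{\theta\in[0,1]:{\bf U}(\theta)\in\mathcal G_\epsilon\}$ is a closed interval $[0,\theta^*_{\bf x}]$; invoking the well-definedness of $\tilde\theta({\bf x})$ established in \cite{WuTangM3AS} --- namely $\tilde\theta({\bf x})=1$ when $\Psi_\epsilon(\check{\bf U}_K({\bf x}))\ge0$, and otherwise the unique root of $\Psi_\epsilon({\bf U}(\theta))=0$ in $[0,1)$ --- one gets $\theta^*_{\bf x}=\tilde\theta({\bf x})$, so ${\bf U}(\theta)\in\mathcal G_\epsilon$ for all $\theta\in[0,\tilde\theta({\bf x})]$. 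Since $\theta_3=\min_{{\bf x}\in\mathbb S_K}\tilde\theta({\bf x})\le\tilde\theta({\bf x})$, we conclude $\widetilde{\bf U}_K({\bf x})={\bf U}(\theta_3)\in\mathcal G_\epsilon\subset\mathcal G$ for every ${\bf x}\in\mathbb S_K$, which together with (a) and (b) gives $\widetilde{\bf U}_h\in\mathbb G_h^k$.

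I expect the only genuinely nonroutine ingredient to be the claim used in the last paragraph that, along the segment, $\Psi_\epsilon\ge0$ is the sole binding constraint and $\tilde\theta({\bf x})$ is the correct cut-off; this rests on the algebraic and monotonicity properties of $\Psi$ --- equivalently, the geometry of $\partial\mathcal G$ and the structure of $\mathcal G_\epsilon$ --- worked out in \cite{WuTangM3AS}, which I would cite rather than reprove, and it is also exactly what makes ${\bf \Pi}_h$ well-defined on all of $\overline{\mathbb G}_h^k$. A minor point to watch is the asymmetry that ${\bf B}_K({\bf x})$ is not rescaled inside $\check{\bf U}_K$; this is harmless, since the density and $q$ constraints do not involve ${\bf B}$ and the final step recombines all eight components convexly with $\bar{\bf U}_K$.
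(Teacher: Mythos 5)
Your proposal is correct and follows the same route as the paper: the paper's proof is a one-line assertion that the three-stage scaling construction yields $\widetilde{\bf U}_K({\bf x})\in\mathcal G_\epsilon\subset\mathcal G$ on $\mathbb S_K$, preserves cell averages, and keeps ${\bf B}_h$ locally divergence-free, deferring the details to the limiter analysis in \cite{WuTangM3AS}. You have simply written out those details (the cut-off estimate for $D$, concavity of $q$, and convexity of $\mathcal G_\epsilon$ for the $\Psi_\epsilon$ step), all of which are sound.
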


\begin{proof}
The above procedure indicates that, for $\forall K \in {\mathcal T}_h$,  
the limited solution defined by \eqref{eq:PCPpolynomial} satisfies 
$\widetilde {\vec U}_K({\bf x}) \in {\mathcal G}_\epsilon  \subset {\mathcal G}_1= {\mathcal G},~\forall {\bf x}\in \mathbb S_K,$
and $ \frac1{|K|}  \int_K \widetilde {\vec U}_K d {\bf x} = \bar{\bf U}_K$.  
Besides, the limited magnetic field $\widetilde {\vec B}_K({\bf x})$ keeps locally divergence-free within $K$.  
\end{proof}

\begin{remark}
The PCP limiting operator ${\bf \Pi}_h$ keeps the conservativeness \eqref{eq:conPCP}. Such a limiter does not destroy the high-order accuracy; see  
\cite{zhang2010,zhang2010b,ZHANG2017301}.
\end{remark}

\subsection{The PCP property of fully discrete schemes}
The PCP property of our fully discrete Runge-Kutta DG scheme \eqref{eq:RK} is proven in the following theorems. 

\begin{theorem}\label{thm:strongPCP1}
Assume that ${\bf U}_h^{(0)}={\bf U}_h^n \in \mathbb G_h^k$, then the solutions ${\bf U}_h^{(i)}$, $1\le i \le N_r$ computed by the proposed DG scheme \eqref{eq:RK} belong to $\mathbb G_h^k$, under the CFL condition  
\begin{equation}\label{eq:final-CFL}
 \Delta t_n \le \min_{i,\ell } \frac{  \varpi_{\mathscr E}^{(q)} |K| }{ \beta_{i\ell}   \omega_q \left( a + \sigma_{K,{\mathscr E},q} ({\bf U}_h^{(i)} ) \right) | {\mathscr E} | }, \quad 1\le q\le Q, \forall {\mathscr E} \in \partial K, 
 \forall K\in{\mathcal T}_h. 
\end{equation}
\end{theorem}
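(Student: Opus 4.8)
The plan is to argue by induction on the Runge--Kutta stage index $i$, assembling the three ingredients already in place: the weak PCP property of the spatial discretization (Theorem~\ref{thm:Pkge1}), the convexity of $\overline{\mathbb G}_h^{k}$ and $\mathbb G_h^{k}$ (Lemma~\ref{lem:Convex2}), and the mapping property ${\bf \Pi}_h:\overline{\mathbb G}_h^{k}\to\mathbb G_h^{k}$ of the PCP limiting operator (Lemma~\ref{lem:Pih}). The base case is immediate: ${\bf U}_h^{(0)}={\bf U}_h^{n}\in\mathbb G_h^{k}$ by hypothesis. For the inductive step one assumes ${\bf U}_h^{(\ell)}\in\mathbb G_h^{k}$ for every $0\le\ell<i$ and must produce ${\bf U}_h^{(i)}\in\mathbb G_h^{k}$.

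For each $\ell$ with $0\le\ell<i$, I would look at the forward-Euler increment ${\bf W}^{(\ell)}:={\bf U}_h^{(\ell)}+\beta_{i\ell}\,\Delta t_n\,{\bf L}_h({\bf U}_h^{(\ell)})$. If $\beta_{i\ell}=0$ this is just ${\bf U}_h^{(\ell)}\in\mathbb G_h^{k}\subset\overline{\mathbb G}_h^{k}$. If $\beta_{i\ell}>0$, I apply Theorem~\ref{thm:Pkge1} to ${\bf U}_h^{(\ell)}$ with effective time step $\beta_{i\ell}\Delta t_n$: its hypothesis ${\bf U}_h^{(\ell)}\in\mathbb G_h^{k}$ holds by the inductive assumption, and its CFL requirement \eqref{eq:CFL:2DMHD} for this substep, namely $\beta_{i\ell}\Delta t_n\,\frac{|{\mathscr E}|}{|K|}\big(a+\sigma_{K,{\mathscr E},q}({\bf U}_h^{(\ell)})\big)<\varpi_{\mathscr E}^{(q)}/\omega_q$ for all $q,{\mathscr E},K$, is exactly what the global bound \eqref{eq:final-CFL} enforces once one takes the minimum over the finitely many stage pairs and over all quadrature points, edges and cells. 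Theorem~\ref{thm:Pkge1} then gives ${\bf W}^{(\ell)}\in\overline{\mathbb G}_h^{k}$.

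With all the increments in $\overline{\mathbb G}_h^{k}$, the convex-combination structure of the SSP Runge--Kutta method finishes the step: since $\alpha_{i\ell}\ge0$ and $\sum_{\ell=0}^{i-1}\alpha_{i\ell}=1$, Lemma~\ref{lem:Convex2} gives $\sum_{\ell=0}^{i-1}\alpha_{i\ell}{\bf W}^{(\ell)}\in\overline{\mathbb G}_h^{k}$, and then Lemma~\ref{lem:Pih} yields ${\bf U}_h^{(i)}={\bf \Pi}_h\big(\sum_{\ell=0}^{i-1}\alpha_{i\ell}{\bf W}^{(\ell)}\big)\in\mathbb G_h^{k}$. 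This closes the induction; taking $i=N_r$ gives the claim. It is worth recording explicitly that the limiter is what restores membership in $\mathbb G_h^{k}$ (pointwise admissibility on $\mathbb S_K$, including the element quadrature nodes $\breve{\mathbb S}_K$ required to evaluate ${\bf F}$ and ${\bf S}$) at the end of each stage, which is precisely the hypothesis demanded by Theorem~\ref{thm:Pkge1} at the next stage — so the weak PCP property \eqref{eq:Lhcondition} and the operator ${\bf \Pi}_h$ interlock stage by stage.

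Everything above except one point is bookkeeping; the delicate point is the CFL translation. The key observation is that within stage $i$ each term $\beta_{i\ell}\Delta t_n{\bf L}_h({\bf U}_h^{(\ell)})$ is a forward-Euler step of length $\beta_{i\ell}\Delta t_n$, so it is this reduced step size — hence the factor $\beta_{i\ell}$ in the denominator of \eqref{eq:final-CFL} — that must obey \eqref{eq:CFL:2DMHD}, and the $\sigma_{K,{\mathscr E},q}$ appearing there is evaluated at the solution being advanced in that substep; since the right-hand side of \eqref{eq:final-CFL} is a minimum over all stages, it is bounded above by each of these per-substep thresholds, so the single condition \eqref{eq:final-CFL} is sufficient. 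I expect this verification, together with keeping straight which set ($\overline{\mathbb G}_h^{k}$ versus $\mathbb G_h^{k}$) is produced by which operator, to be the only place needing care; the proof of Theorem~\ref{thm:strongPCP1} is otherwise a short consequence of the results already established.
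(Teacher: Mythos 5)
Your proposal is correct and follows essentially the same route as the paper's own proof: induction over the Runge--Kutta stages, applying the weak PCP property of Theorem~\ref{thm:Pkge1} to each forward-Euler substep of size $\beta_{i\ell}\Delta t_n$ (which is exactly how the CFL condition \eqref{eq:final-CFL} is used), then convexity of $\overline{\mathbb G}_h^{k}$ (Lemma~\ref{lem:Convex2}) for the convex combination, and finally the limiter ${\bf \Pi}_h$ to return to $\mathbb G_h^{k}$. Your explicit treatment of the $\beta_{i\ell}=0$ case and of the step-size/CFL bookkeeping is a slightly more detailed version of the same argument.
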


\begin{proof}
We prove it by the second principle of mathematical induction for $i$. The hypothesis implies ${\bf U}_h^{(i)} \in \mathbb G_h^k$ for $i=0$. 
Assume that ${\bf U}_h^{(\ell)} \in \mathbb G_h^k$, $1\le \ell \le i-1$. Thanks to the weak PCP property \eqref{eq:Lhcondition} in Theorem \ref{thm:Pkge1}, we have ${\bf U}_h^{ (\ell) } + \beta_{ i\ell } \Delta t_n {\bf L}_h ( {\bf U}_h^{(\ell)} ) \in 
\overline {\mathbb G}_h^k$, $1\le \ell \le i-1$ under the CFL condition \eqref{eq:final-CFL}. 
The convexity of $\overline {\mathbb G}_h^{k}$ in Lemma \ref{lem:Convex2} implies $\sum_{\ell=0}^{i-1} 
\big[
\alpha_{i\ell} \big( {\bf U}_h^{ (\ell) } + \beta_{ i\ell } \Delta t_n {\bf L}_h ( {\bf U}_h^{(\ell)} ) \big)
\big] \in 
\overline {\mathbb G}_h^k$. Since the PCP limiting operator ${\bf \Pi}_h$ maps the 
numerical solutions from $\overline {\mathbb G}_h^{k}$ to $ {\mathbb G}_h^{k}$, 
we obtain ${\bf U}_h^{(i)} \in \mathbb G_h^k$ by \eqref{eq:RK}. Using the principle of induction, we have ${\bf U}_h^{(i)} \in \mathbb G_h^k$ for all $i \in \{0,1,\dots, N_r\}$.
\end{proof}

\begin{theorem}\label{thm:strongPCP}
Under the CFL condition \eqref{eq:final-CFL}, 
the proposed fully discrete Runge-Kutta DG scheme \eqref{eq:RK} always preserves ${\bf U}_h^n \in \mathbb G_h^k$ for all $n \in \mathbb N$.
\end{theorem}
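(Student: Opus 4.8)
The plan is a direct induction on the time level $n$, using the ingredients already assembled. Nothing new needs to be estimated: the substantive analysis has been carried out in Theorem~\ref{thm:Pkge1} (the weak PCP property of ${\bf L}_h$), in Lemma~\ref{lem:Convex2} (convexity of $\overline{\mathbb G}_h^{k}$ and the projection property of ${\bf P}_w$), in Lemma~\ref{lem:Pih} (the limiter maps $\overline{\mathbb G}_h^{k}$ into ${\mathbb G}_h^{k}$), and in Theorem~\ref{thm:strongPCP1} (one full Runge--Kutta time step preserves ${\mathbb G}_h^{k}$). The present theorem is the bookkeeping corollary that chains these together over all time steps.

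For the base case $n=0$, recall ${\bf U}_h^0 = {\bf \Pi}_h {\bf P}_{w}({\bf U}(\cdot,0))$. Since the prescribed initial data satisfy $\rho_0>0$, $p_0>0$ and $|{\bf v}_0|<1$ pointwise, the corresponding conservative vector obeys ${\bf U}({\bf x},0)\in{\mathcal G}$ for all ${\bf x}$, so Lemma~\ref{lem:Convex2} yields ${\bf P}_{w}({\bf U}(\cdot,0))\in\overline{\mathbb G}_h^{k}$, and then Lemma~\ref{lem:Pih} gives ${\bf U}_h^0\in{\mathbb G}_h^{k}$. For the inductive step, assume ${\bf U}_h^n\in{\mathbb G}_h^{k}$. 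Then, taking ${\bf U}_h^{(0)}={\bf U}_h^n$ and invoking Theorem~\ref{thm:strongPCP1} under the CFL condition \eqref{eq:final-CFL}, every intermediate stage ${\bf U}_h^{(i)}$, $1\le i\le N_r$, lies in ${\mathbb G}_h^{k}$; in particular ${\bf U}_h^{n+1}={\bf U}_h^{(N_r)}\in{\mathbb G}_h^{k}$. By induction, ${\bf U}_h^n\in{\mathbb G}_h^{k}$ for all $n$, which is the assertion.

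The only points that merit a remark — and neither is a genuine obstacle — are: (i) the base case is non-vacuous precisely because the continuous initial data are physically admissible, so the admissibility ${\bf U}(\cdot,0)\in{\mathcal G}$ used above is nothing more than a restatement of the hypotheses; and (ii) the CFL bound \eqref{eq:final-CFL} involves $\sigma_{K,{\mathscr E},q}({\bf U}_h^{(i)})$, i.e.\ the stage values being produced, so in a rigorous statement it should be read as a restriction that holds for the computed stages (equivalently, one may replace it by the stage-independent, slightly more restrictive bound obtained from an a priori upper bound on $\sigma_{K,{\mathscr E},q}$). Once this reading is fixed, the induction closes with no further work, since Theorem~\ref{thm:strongPCP1} already absorbs the recursive dependence of the stages on one another within a single time step.
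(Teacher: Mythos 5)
Your proposal is correct and follows essentially the same route as the paper: the base case via Lemma~\ref{lem:Convex2} (admissible initial data, so ${\bf P}_w({\bf U}(\cdot,0))\in\overline{\mathbb G}_h^{k}$) together with the limiter, and then induction on $n$ using Theorem~\ref{thm:strongPCP1}. Your parenthetical remarks on the admissibility of the initial data and on reading the CFL condition \eqref{eq:final-CFL} with respect to the computed stages are sensible clarifications but do not change the argument.
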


\begin{proof}
Since ${\bf P}_{w} ({\bf U}({\bf x},0)) \in \overline {\mathbb G}_h^{k}  $ as indicated by Lemma \ref{lem:Convex2}, we known 
${\bf U}_h^0 \in \mathbb G_h^k$. 
With the help of Theorem \ref{thm:strongPCP1}, 
we obtain the conclusion by induction for $n$.  
\end{proof}

\subsection{Illustration of some details on Cartesian meshes} Assume that the mesh is rectangular with cells $\{[x_{i-1/2},x_{i+1/2}]\times [y_{\ell-1/2},y_{\ell+1/2}] \}$ 
and spatial step-sizes $\Delta x_i=x_{i+1/2}-x_{i-1/2}$ and $\Delta y_\ell=y_{\ell+1/2}-y_{\ell-1/2}$ in $x$- and $y$-directions respectively. 
Let ${\mathbb S}_i^x=\{ x_i^{(q)}  \}_{q=1}^Q$
and ${\mathbb S}_\ell^y=\{ y_\ell^{(q)}  \}_{q=1}^Q$ 
denote the $Q$-point Gauss quadrature nodes in the intervals 
$[x_{i-1/2},x_{i+1/2}]$ and 
$[y_{\ell-1/2},y_{\ell+1/2}]$ respectively. 
For the cell $K=[x_{i-1/2},x_{i+1/2}]\times [y_{\ell-1/2},y_{\ell+1/2}]$, 
 the point set $\breve {\mathbb S}_K$ required in \eqref{eq:SK} is ${\mathbb S}_i^x \otimes 
{\mathbb S}_\ell^y$, and 
the set $\widehat {\mathbb S}_K$ is given by (cf.~\cite{zhang2010})
\begin{equation}\label{eq:RectS}
\widehat {\mathbb S}_K = \big(  \widehat{\mathbb S}_i^x \otimes 
{\mathbb S}_\ell^y \big) \cup \big(  {\mathbb S}_i^x \otimes 
\widehat{\mathbb S}_\ell^y \big),
\end{equation}
where 
$\widehat{\mathbb S}_i^x=\{ \widehat x_i^{(\mu)}  \}_{\mu=1}^{\tt L}$
and $\widehat {\mathbb S}_\ell^y=\{\widehat y_\ell^{(\mu)}  \}_{\mu=1}^{\tt L}$ 
denote the $\tt L$-point (${\tt L} \ge \frac{k+3}2$) Gauss--Lobatto quadrature nodes in the intervals 
$[x_{i-1/2},x_{i+1/2}]$ and 
$[y_{\ell-1/2},y_{\ell+1/2}]$ respectively.  
With $\widehat {\mathbb S}_K$ in \eqref{eq:RectS}, a special quadrature \cite{zhang2010} satisfying \eqref{eq:decomposition} can be constructed:
\begin{equation} \label{eq:U2Dsplit}
\begin{split}
\frac{1}{|K|}\int_K u({\bf x}) d {\bf x}
&= \frac{ \Delta x_i \widehat \omega_1}{ \Delta x_i + \Delta y_\ell } \sum \limits_{q = 1}^{ Q}  \omega_q \left(  
u\big( x_i^{(q)},y_{\ell-\frac12} \big) 
+ u\big( x_i^{(q)},y_{\ell+\frac12} \big) 
\right)
\\
&
+ \frac{ \Delta y_\ell \widehat \omega_1}{ \Delta x_i + \Delta y_\ell } \sum \limits_{q = 1}^{ Q}  \omega_q \left( u \big(x_{i-\frac12},y_\ell^{(q)}\big) +
u\big(x_{i+\frac12},y_\ell^{(q)}\big) \right)
\\
& + \frac{\Delta x_i}{\Delta x_i + \Delta y_\ell} \sum \limits_{\mu = 2}^{{\tt L}-1} \sum \limits_{q = 1}^{Q}  \widehat \omega_\mu \omega_q  u\big(  x_i^{(q)},\widehat y_\ell^{(\mu)} \big)
\\
& + \frac{\Delta y_\ell}{\Delta x_i + \Delta y_\ell}  \sum \limits_{\mu = 2}^{{\tt L}-1} \sum \limits_{q = 1}^{ Q}  \widehat \omega_\mu \omega_q  u\big(\widehat x_i^{(\mu)},y_\ell^{(q)}\big),\quad~ \forall u \in {\mathbb P}^k(K),
\end{split}
\end{equation}
where $\{\widehat w_\mu\}_{\mu=1}^{\tt L}$ are the weights of the $\tt L$-point Gauss--Lobatto quadrature. 
If labeling the bottom, right, top and left edges of $K$ as  
${\mathscr E }_1$, ${\mathscr E }_2$, ${\mathscr E }_3$ and ${\mathscr E }_4$, respectively, then \eqref{eq:U2Dsplit} implies 
$
\varpi_{ {\mathscr E }_j  }^{(q)} = \frac{ \Delta x_i \widehat \omega_1 \omega_q}{ \Delta x_i + \Delta y_\ell },~j=1,3;~
\varpi_{ {\mathscr E }_j  }^{(q)} = \frac{ \Delta y_\ell \widehat \omega_1 \omega_q}{ \Delta x_i + \Delta y_\ell },~j=2,4.
$ 
According to Theorem \ref{thm:strongPCP1}, the CFL condition \eqref{eq:final-CFL} for our PCP schemes on Cartesian meshes is 
\begin{equation}\label{eq:CFL:2DRMHD}
  \Delta t_n
\left( \frac{1}{\Delta x_i} 
+ \frac{1}{\Delta y_\ell} \right) 
 < \min_{m,s,q} \frac{ \widehat \omega_1}{ \beta_{ms} \big( a + \sigma_{K,{\mathscr E}_j,q} ({\bf U}_h^{(m)} ) \big) },\quad \forall
K\in{\mathcal T}_h,~1\le j \le 4,
\end{equation}
where $\widehat  \omega_1= \frac{1}{{\tt L}({\tt L}-1)}$.  
Since $\sigma_{K,{\mathscr E}_j,q} ({\bf U}_h^{(m)} )$ depends on the numerical solutions at intermediate Runge-Kutta stages, it is difficult to rigorously enforce the condition \eqref{eq:CFL:2DRMHD}. 
Note that $\sigma_{K,{\mathscr E}_j,q} ({\bf U}_h^{(m)} )$ is proportional to the jump of the normal magnetic component across cell interface, 
which is zero for the exact solution. Thus $\sigma_{K,{\mathscr E}_j,q}$ is small and at the level of truncation error. Thus we suggest $\Delta t_n = \frac{  C_{\rm cfl} }{ a \max_{m,s} \beta_{ms} } 
\big( \frac{1}{\Delta x_i} 
+ \frac{1}{\Delta y_\ell} \big)^{-1}$, with 
the CFL number $C_{\rm cfl}$ (slightly) smaller than $\widehat \omega_1$, which works robustly in our numerical tests. For the third-order SSP Runge-Kutta method \eqref{3SSP}, $\max_{m,s} \beta_{ms} =1 $.

\section{Numerical tests}\label{sec:examples}

This section conducts numerical tests on several 2D challenging 
 RMHD problems  with either strong discontinuities, low plasma-beta $\beta:=p/p_m$, or low rest-mass density or pressure, to demonstrate our theoretical analysis, as well as  
the accuracy, high-resolution and robustness of the proposed PCP methods.  
Without loss of generality, we focus on the proposed PCP third-order ($k=2$) DG methods on uniform 
Cartesian meshes, with the third-order SSP Runge-Kutta time discretization \eqref{eq:RK}--\eqref{3SSP}. Unless otherwise stated, all 
the computations are restricted to the ideal EOS \eqref{eq:iEOS} with $\gamma=5/3$, 
and the CFL number is set as 0.15.

\subsection{Smooth problems}
Two smooth problems are tested to check the accuracy of our
method. The first one is similar to those simulated in \cite{WuTang2015,QinShu2016}, and  
its exact solution is $\vec (\rho,{\bf v},{\bf B},p)(x,y,t)=(1+0.9999999 \sin \big(2 \pi (x+y-1.1t)\big), 0.9, 0.2, 0, 1, 1, 1, 10^{-2} ),$
which describes a RMHD sine wave (with very low density and low pressure) fast propagating 
in the domain $\Omega=[0,1]^2$ with a large velocity $|{\bf v}|\approx 0.922 c$.  
The second problem describes Alfv\'en waves propagating periodically in $\Omega=[0,\sqrt{2}]^2$ with a speed of $0.9c$ higher 
than that in \cite{ZhaoTang2017}. The exact solution of this problem is given by 
$\rho(x,y,t)=1$, $v_1(x,y,t) = -0.9 \sin( 2\pi (\varsigma + t / \kappa) ) \sin \alpha$, 
$v_2(x,y,t)=0.9\sin( 2\pi (\varsigma + t / \kappa) ) \cos \alpha$, 
$v_3(x,y,t)=0.9\cos( 2\pi (\varsigma + t / \kappa) )$, $B_1(x,y,t)=\cos \alpha + \kappa v_1(x,y,t)$, $B_2(x,y,t)=\sin \alpha + \kappa v_2(x,y,t)$, $B_3(x,y,t)= \kappa v_3(x,y,t)$, $p(x,y,t)=0.1$, 
where $\kappa = \sqrt{1+\rho H W^2}$ and $\varsigma = x \cos \alpha + y \sin \alpha$ with $\alpha = \pi/4$. 

In the computations, the domain $\Omega$ is divided into $N\times N$ uniform rectangular cells 
with $N \in \{ 10, 20,40,80,160,320, 640 \}$, and periodic boundary conditions are used. 
\figref{fig:smooth} shows the numerical errors at $t=1$ 
in the numerical solutions computed by the PCP third-order DG method at different grid resolutions. 
It is seen that the magnitudes of the errors are reduced as we refine the mesh.  
Moreover, the expected third-order convergence rate is observed, indicating that 
our discretization of the added source term in the symmetrizable RMHD system \eqref{ModRMHD} 
and the PCP limiting procedure 
both maintain the desired accuracy, as expected.

\begin{figure}[htbp]
	\centering
	\begin{subfigure}[b]{0.48\textwidth}
	\begin{center}
		\includegraphics[width=1.0\linewidth]{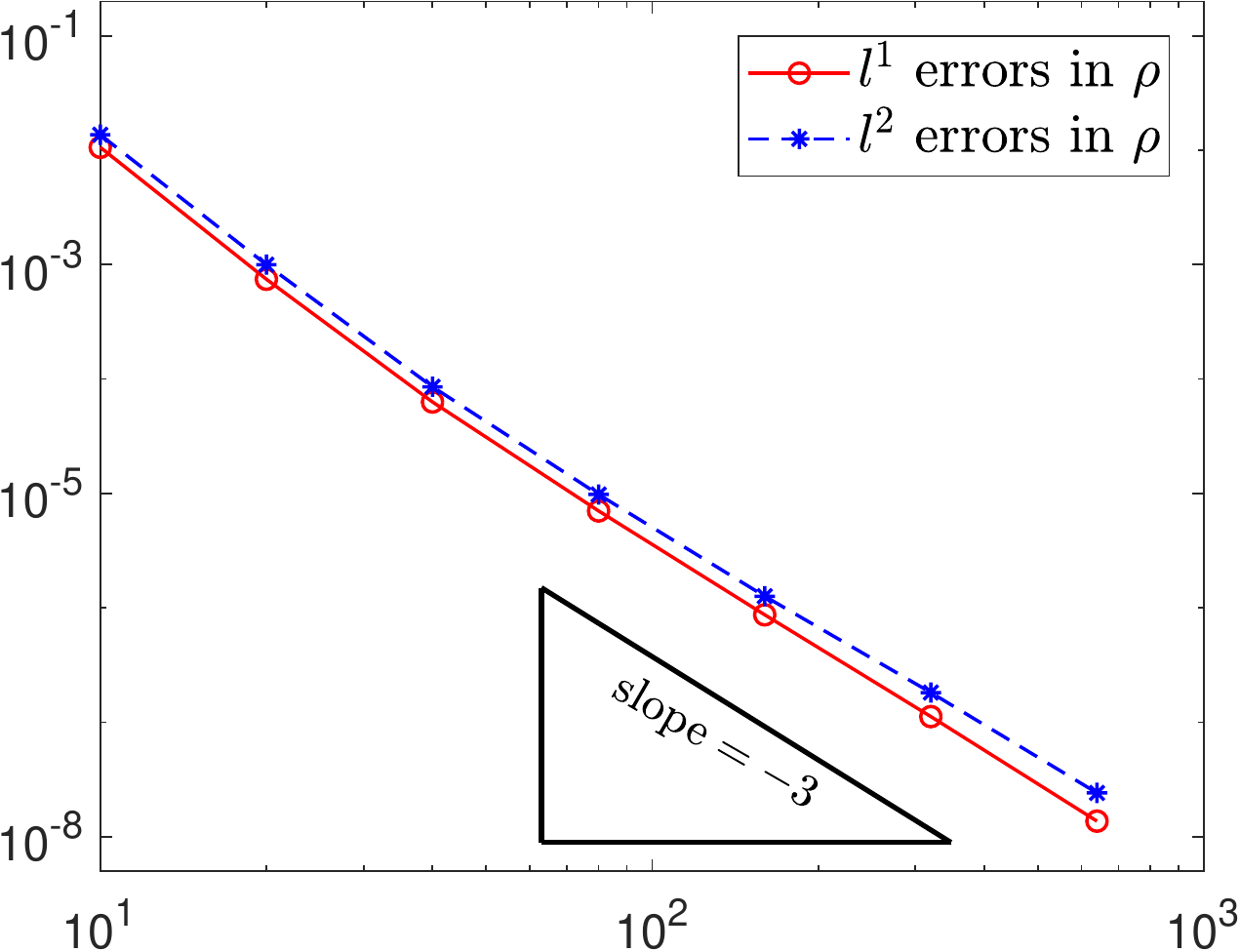}
	\end{center}
\end{subfigure}
\begin{subfigure}[b]{0.48\textwidth}
	\begin{center}
		\includegraphics[width=1.0\linewidth]{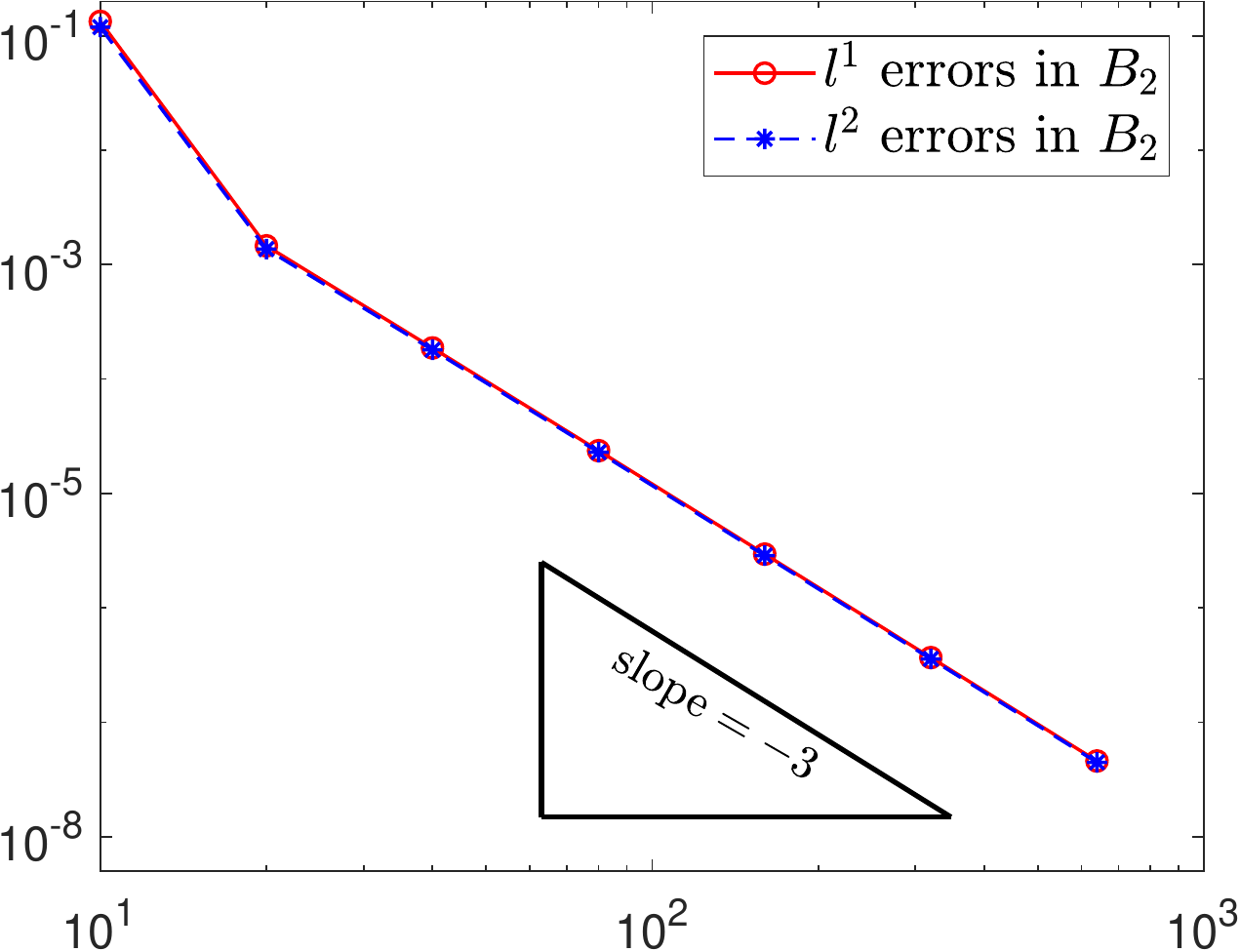}
	\end{center}
\end{subfigure}	
	\captionsetup{belowskip=-8pt}
	\caption{\small Numerical errors in $l^1$ and $l^2$ norms at $t=1$ for the first smooth problem (left) and the second smooth problem (right). The horizontal axis represents the value of $N$.}
	\label{fig:smooth}
\end{figure}

To verify the capability of the proposed PCP methods in resolving complicated wave configurations, we will simulate  
an Orszag-Tang problem, three blast problems  
and two astrophysical jets. 
For these problems, before the PCP limiting procedure, the WENO limiter \cite{Qiu2005} with locally divergence-free WENO reconstruction \cite{ZhaoTang2017} is implemented with the local characteristic decomposition 
to enhance the numerical stability of high-oder DG methods in resolving the strong discontinuities. The WENO limiter is only used 
in the ``trouble'' cells adaptively detected by the indicator in \cite{Krivodonova}.

\subsection{Orszag-Tang problem}
This test simulates an Orszag-Tang problem 
for the RMHD \cite{Host:2008}. 
Initially, the domain $\Omega = [0,2\pi]^2$ is filled 
with relativistically 
hot gas, and periodic boundary conditions are used.
We set the adiabatic index $\Gamma = 4/3$, 
the initial rest-mass density $\rho = 1$ and thermal pressure $p=10$. 
The initial velocity field of the fluid is 
$
{\bf v}(x,y,0)=   ( - A \sin(y), A\sin(x), 0   ),
$
where the parameter $A=0.99/\sqrt{2}$ so that the maximum velocity is $0.99c$ (corresponding Lorentz factor $\approx 7.09$). The magnetic field is initialized at ${\bf B}(x,y,0)=(-\sin y, \sin(2x), 0)$.  
Although the initial solution is smooth,
complicated wave structures are formed as time increases, and turbulence
behavior will be produced eventually. \figref{fig:OT} gives the numerical results computed by the third-order PCP method 
on $600\times 600$ uniform grids. 
One can see that the complicated flow structures are well captured by our method with high resolution and agree  
with those presented in \cite{Host:2008,WuShu2019SISC}. 
In this test, we observe
that it is necessary to enforce the DG solution in $\mathbb G_n^k$ by the PCP limiting procedure, otherwise the
code would break down at time $t \approx 1.98$.

\begin{figure}[htbp]
	\centering
	\begin{subfigure}[b]{0.48\textwidth}
		\begin{center}
			\includegraphics[width=0.9\linewidth]{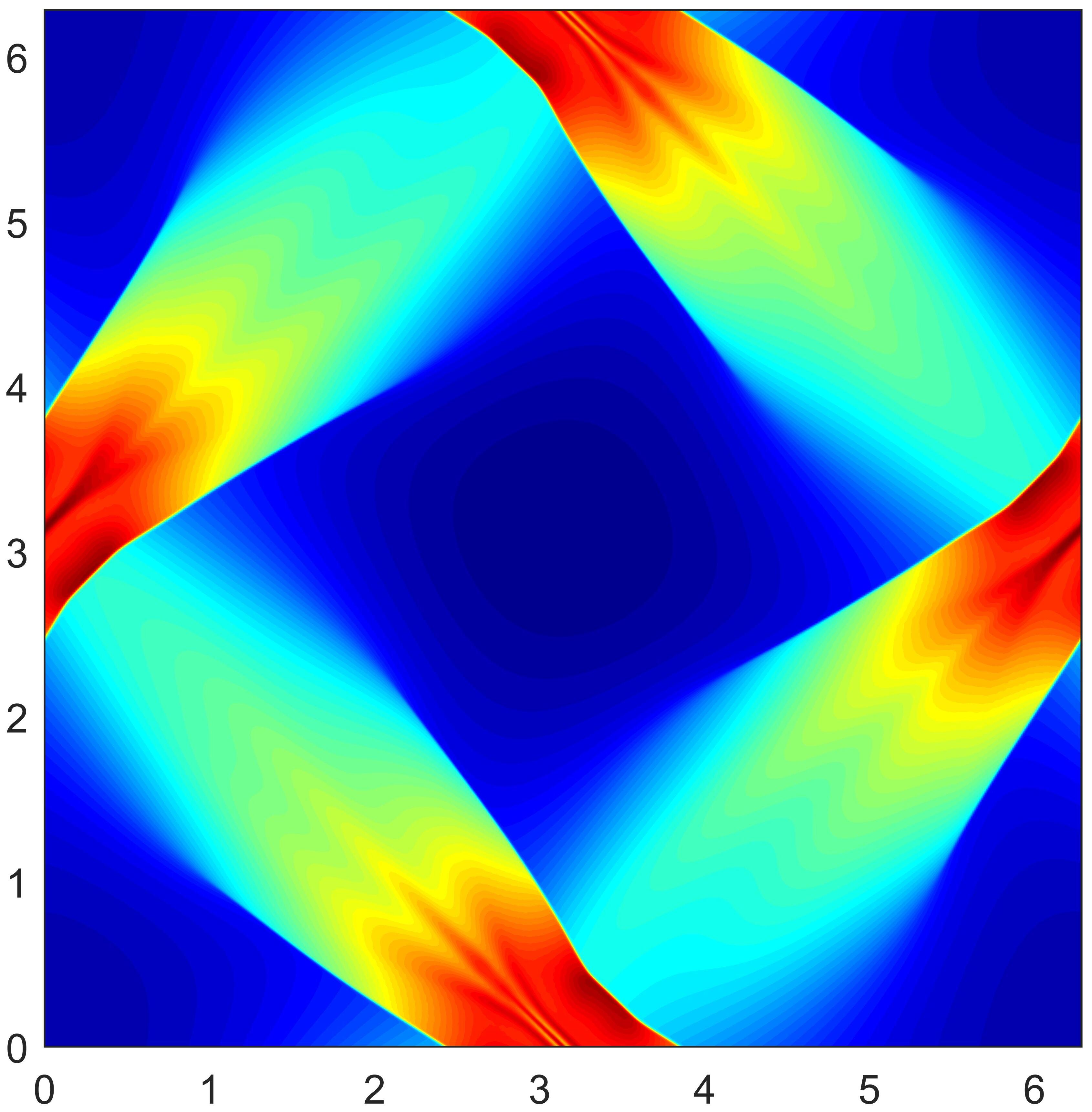}
		\end{center}
	\end{subfigure}
	\begin{subfigure}[b]{0.48\textwidth}
		\begin{center}
			\includegraphics[width=0.9\linewidth]{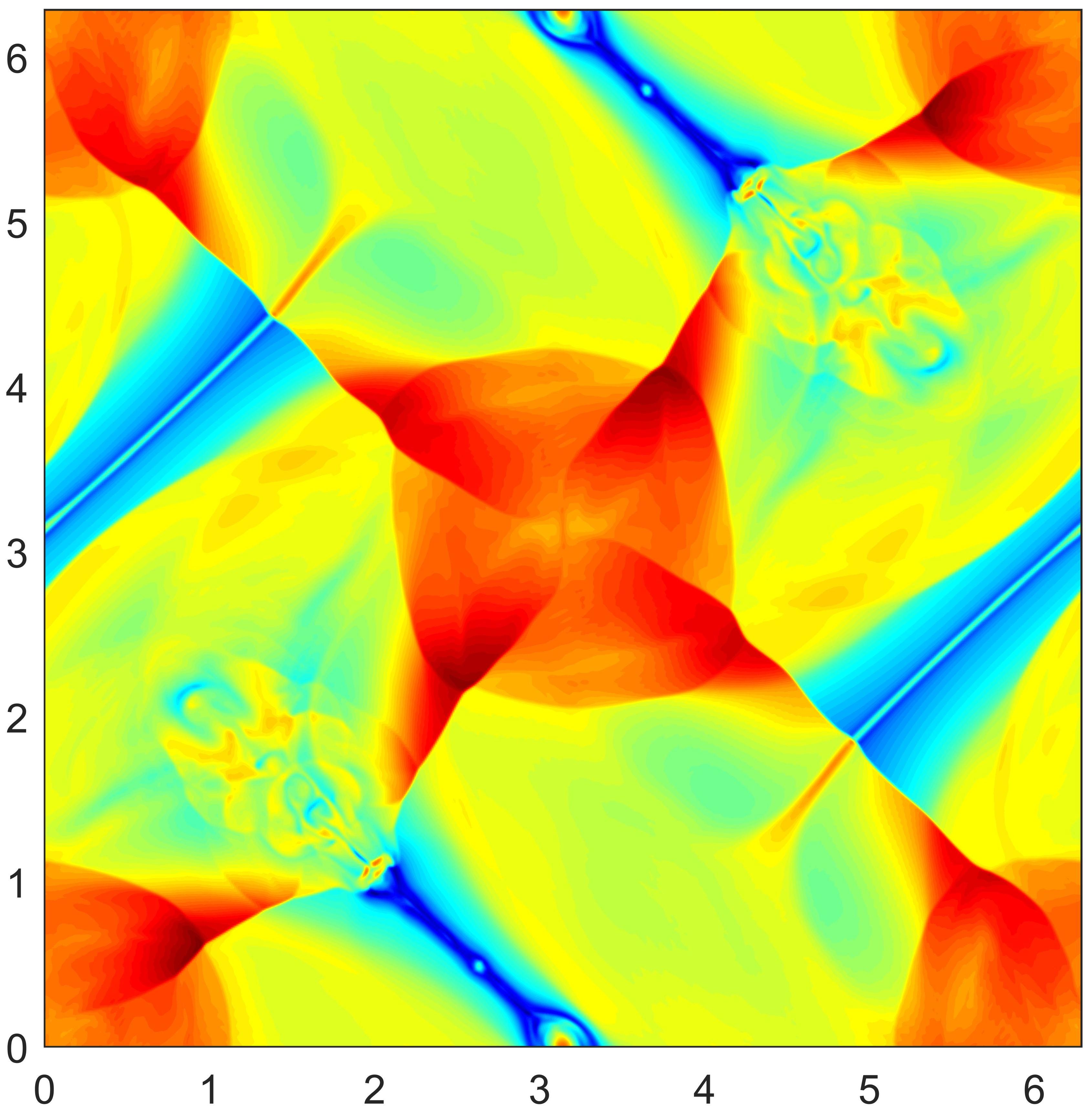}
		\end{center}
	\end{subfigure}
	\captionsetup{belowskip=-17pt}
	\caption{\small Schlieren images of $\log(\rho)$ at $t = 2.818127$ (left) and $t = 6.8558$ (right) for the Orszag-Tang problem.}
	\label{fig:OT}
\end{figure}

\subsection{Blast problems}
Blast problem is a benchmark test for RMHD numerical schemes. 
Simulating a RMHD blast problem with strong magnetic field is difficult, because nonphysical
quantities, e.g., negative pressure, are very 
likely to be produced in the 
numerical simulation. 
Our setup is similar to those in \cite{MignoneHLLCRMHD,del2007echo,BalsaraKim2016,Zanotti2015}. Initially, the domain $\Omega =[-6,6]^2$ is filled with a homogeneous gas at rest with adiabatic index $\Gamma=4/3$.
The explosion zone ($r<0.8$) has a density of $10^{-2}$ and a pressure of $1$, while the ambient medium ($r>1$) has
a density of $10^{-4}$ and a pressure of $p_a=5\times 10^{-4}$, where $r=\sqrt{x^2+y^2}$.
A linear taper is applied to the density and pressure for $r\in[0.8,1]$. The magnetic field is initialized in the $x$-direction as $B_a$.
As $B_a$ is set larger, the initial ambient magnetization becomes higher ($\beta_a:=p_a/p_m$ becomes lower) and this test becomes more challenging. In the literature \cite{MignoneHLLCRMHD,del2007echo,BalsaraKim2016}, $B_a$ is usually specified as 0.1, which corresponds to a moderate magnetized case ($\beta_a=0.1$).
A more strongly magnetized case with $B_a=0.5$ was tested in \cite{Zanotti2015}, corresponding to a lower plasma-beta $\beta_a=4\times 10^{-3}$.
Many existing methods in the literature require some artificial treatments for the strongly magnetized case; see e.g., \cite{komissarov1999godunov,MignoneHLLCRMHD,del2007echo}. It was reported in \cite{del2007echo} that the RMHD code {\tt ECHO} was not able
to run this test with $B_a>0.1$ if no ad hoc numerical strategy was employed.

\begin{figure}[htbp]
	\centering
	\begin{subfigure}[b]{0.48\textwidth}
		\begin{center}
			\includegraphics[width=0.88\linewidth]{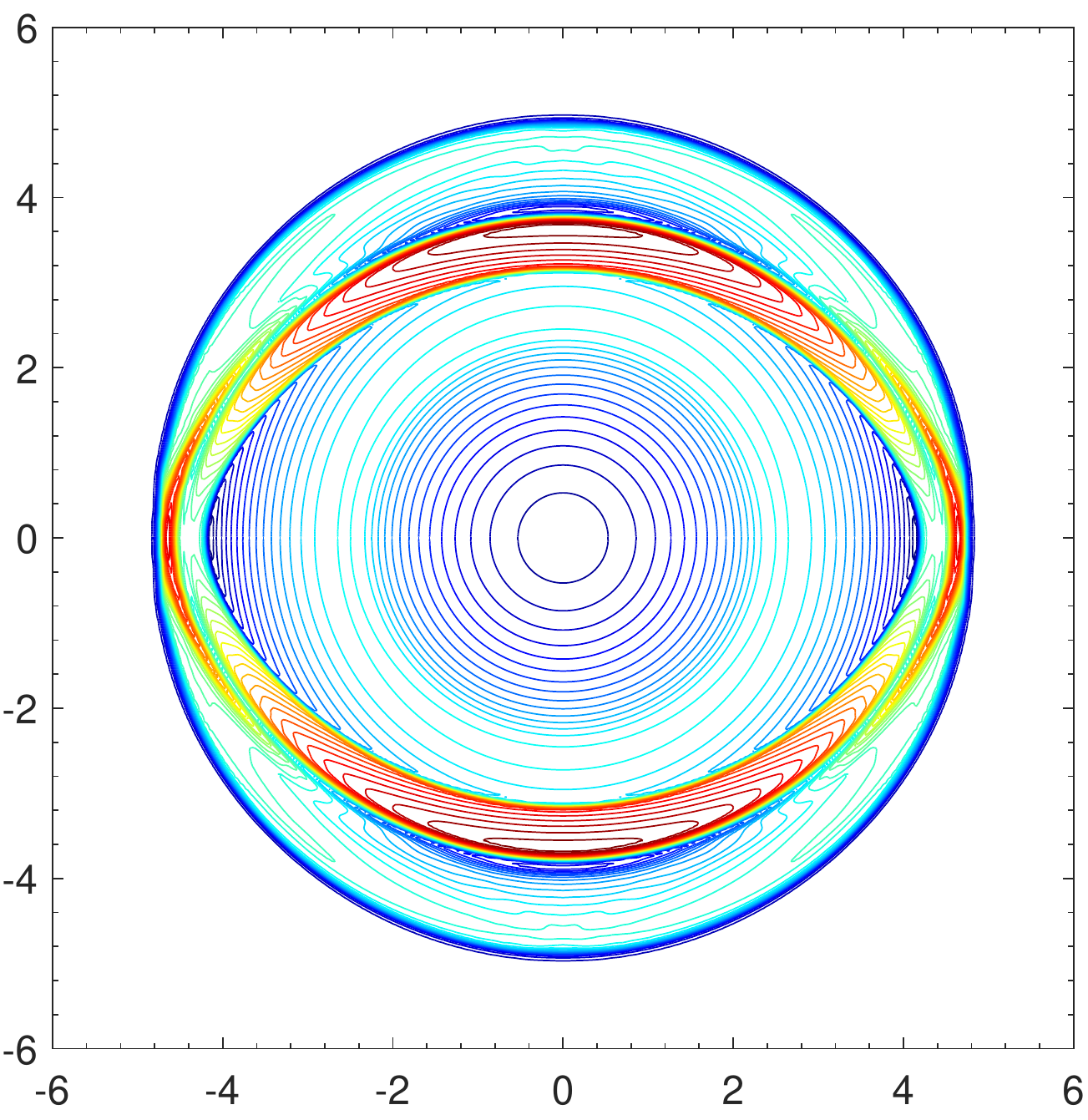}
		\end{center}
	\end{subfigure}
	\begin{subfigure}[b]{0.48\textwidth}
	\begin{center}
		\includegraphics[width=0.88\linewidth]{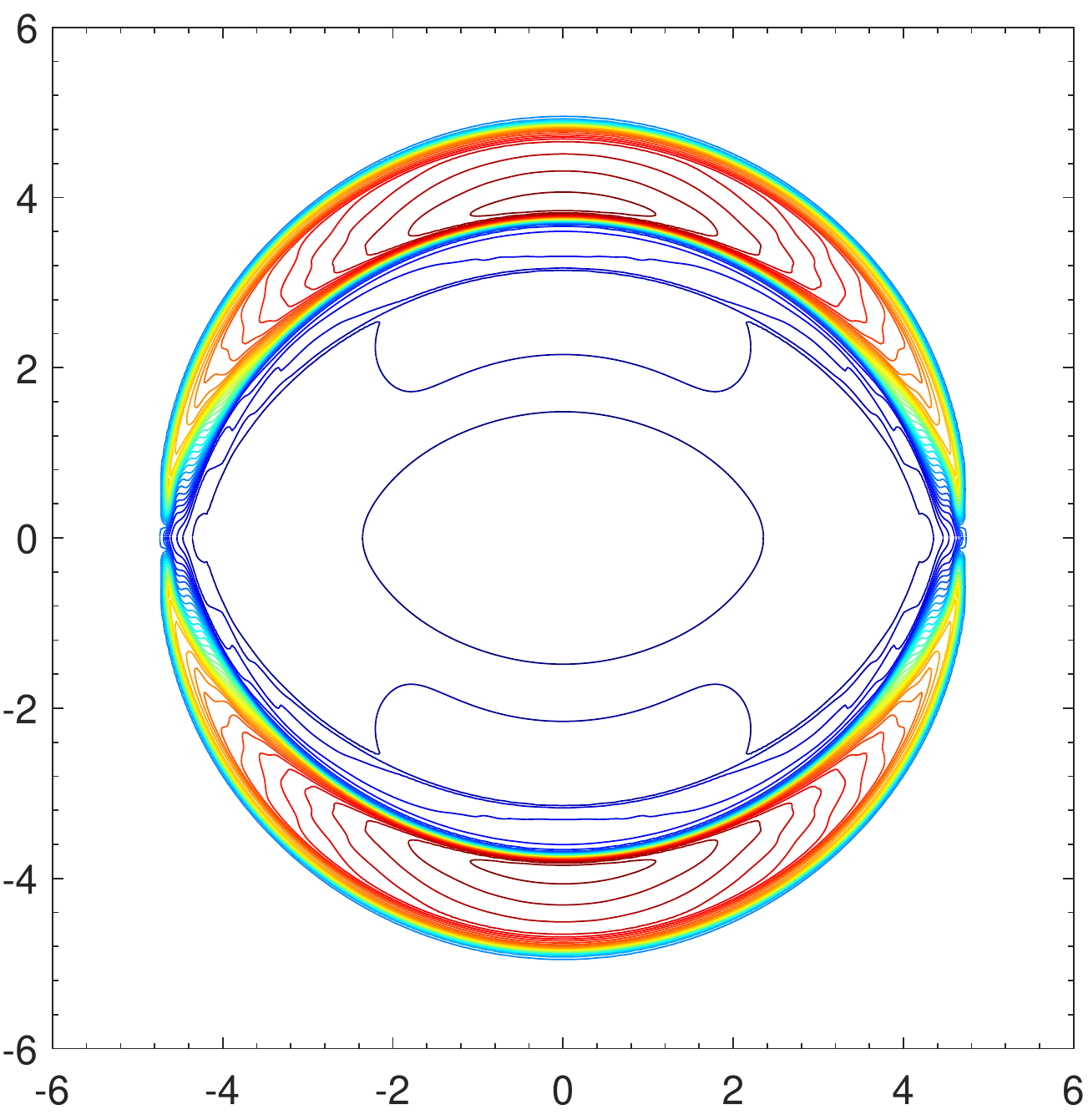}
	\end{center}
	\end{subfigure}
	\begin{subfigure}[b]{0.48\textwidth}
	\begin{center}
		\includegraphics[width=0.88\linewidth]{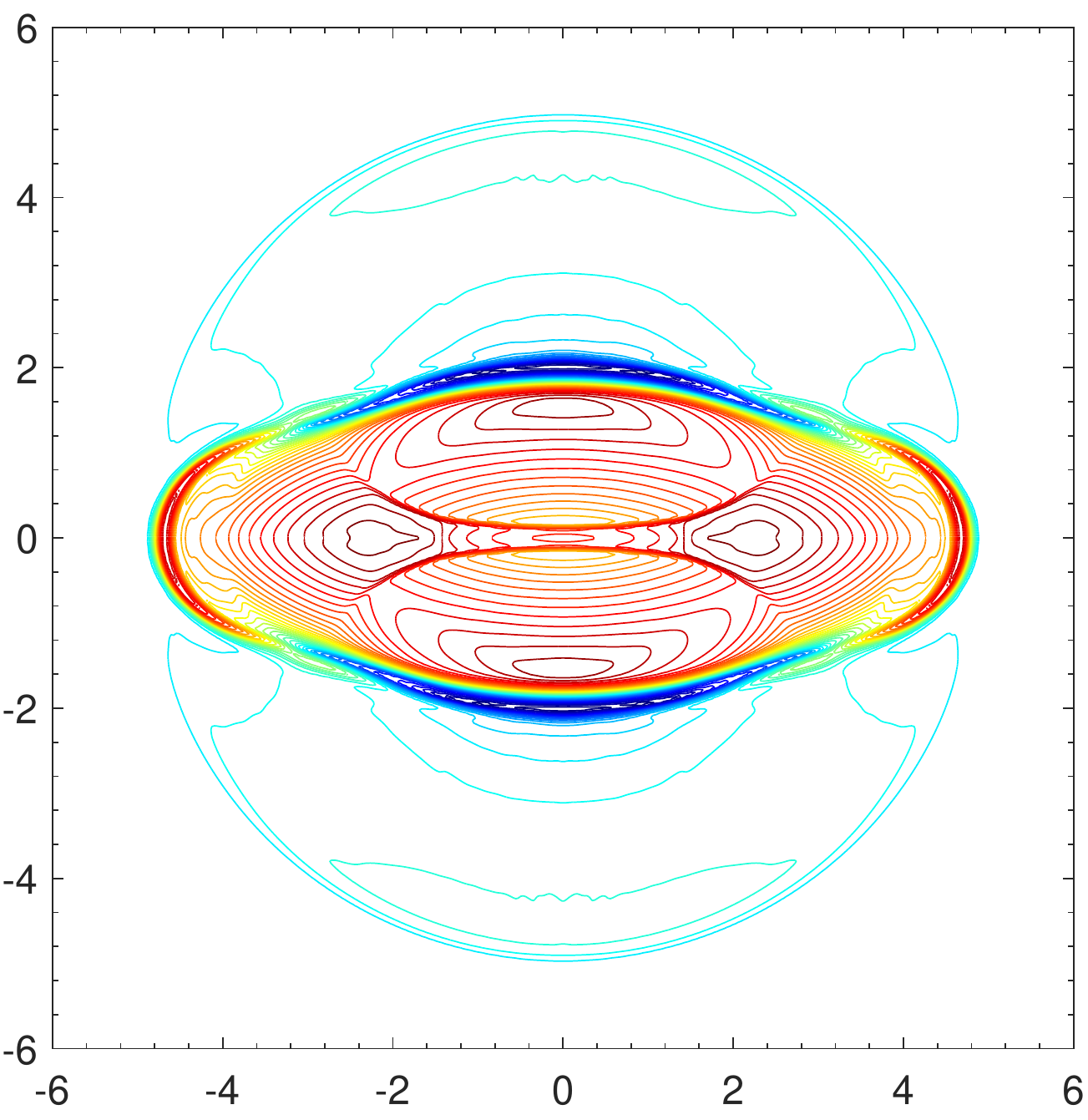}
	\end{center}
	\end{subfigure}
	\begin{subfigure}[b]{0.48\textwidth}
	\begin{center}
		\includegraphics[width=0.88\linewidth]{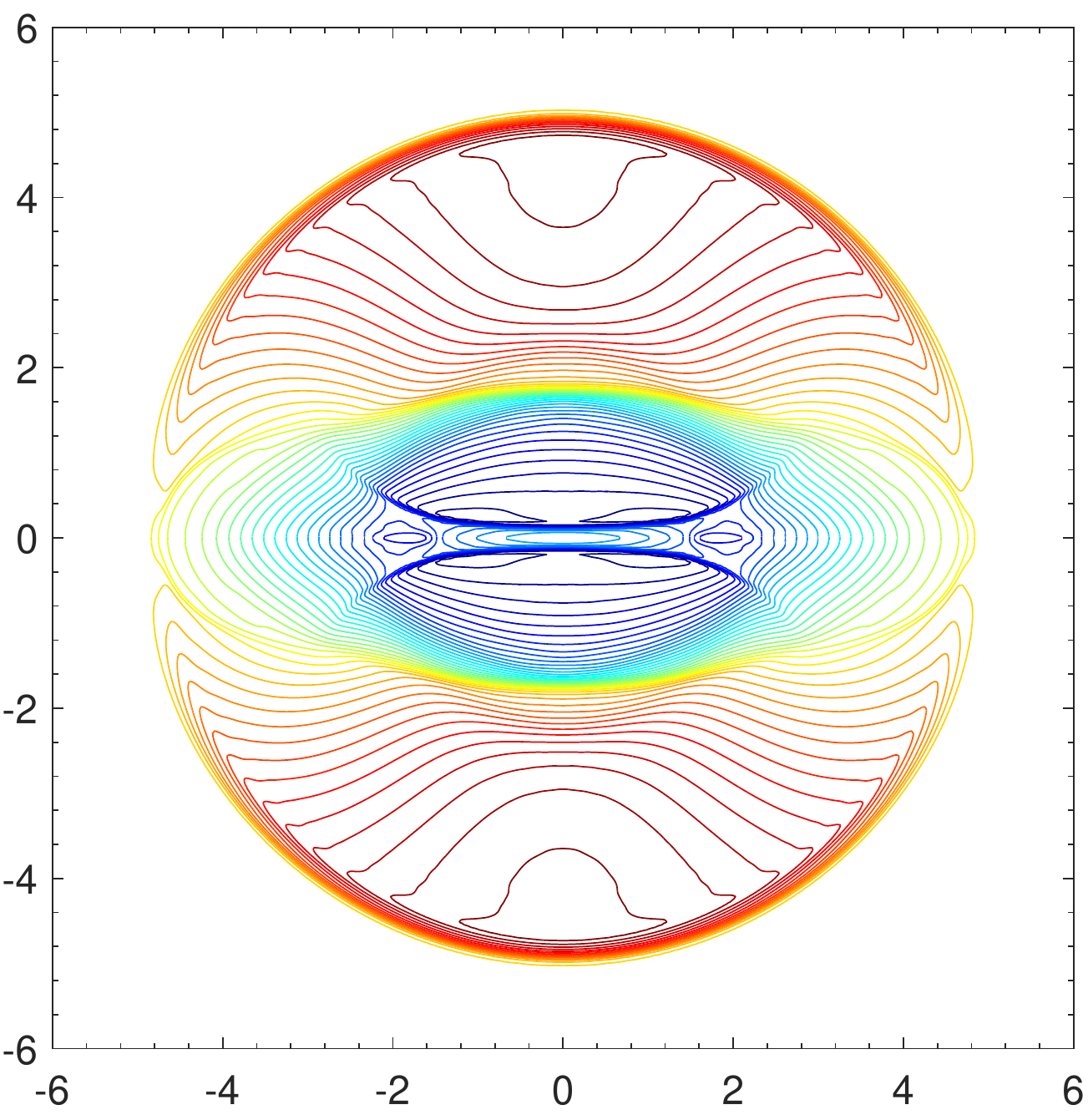}
	\end{center}
	\end{subfigure}
	\begin{subfigure}[b]{0.48\textwidth}
	\begin{center}
		\includegraphics[width=0.88\linewidth]{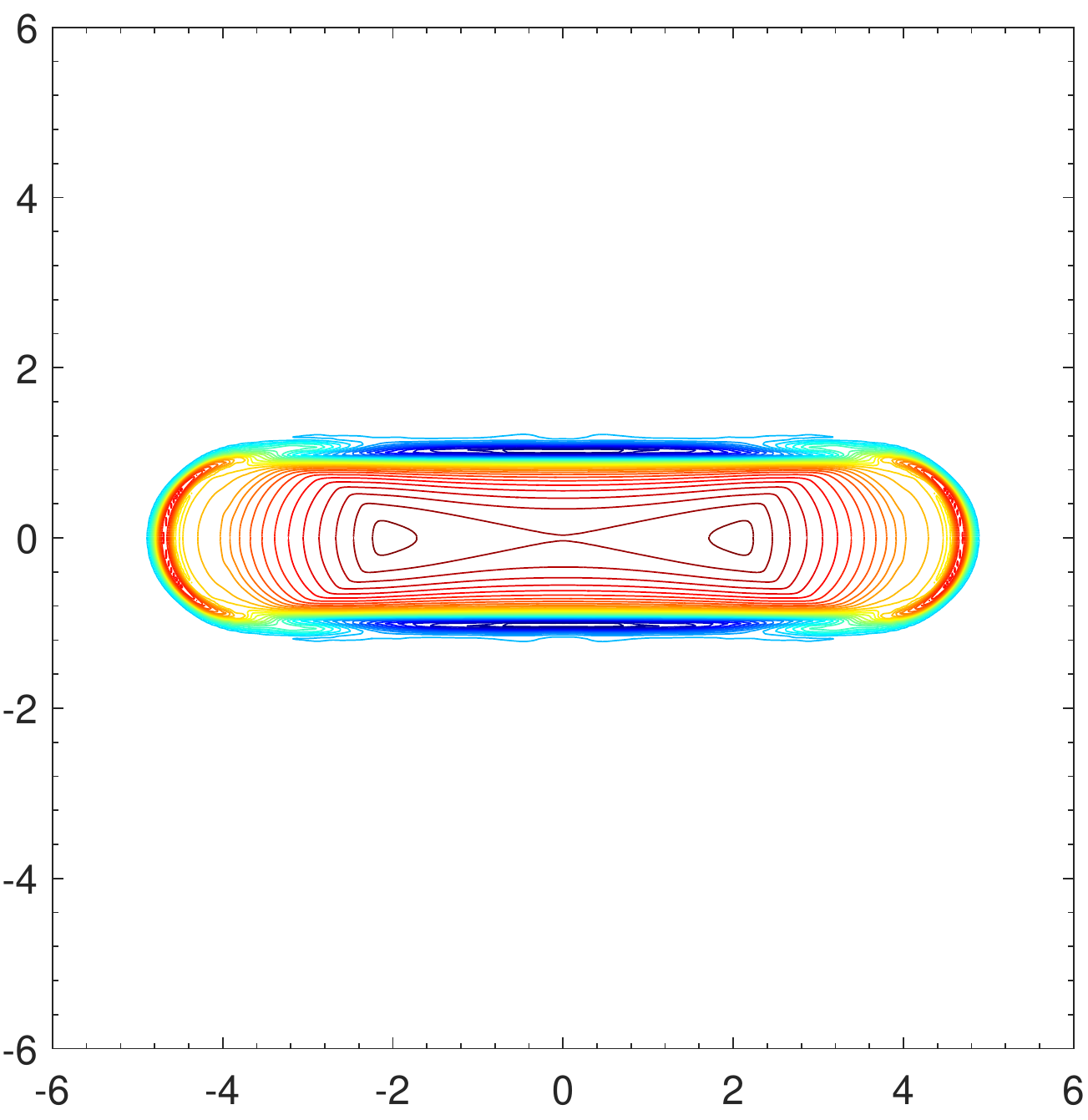}
	\end{center}
	\end{subfigure}
	\begin{subfigure}[b]{0.48\textwidth}
		\begin{center}
			\includegraphics[width=0.88\linewidth]{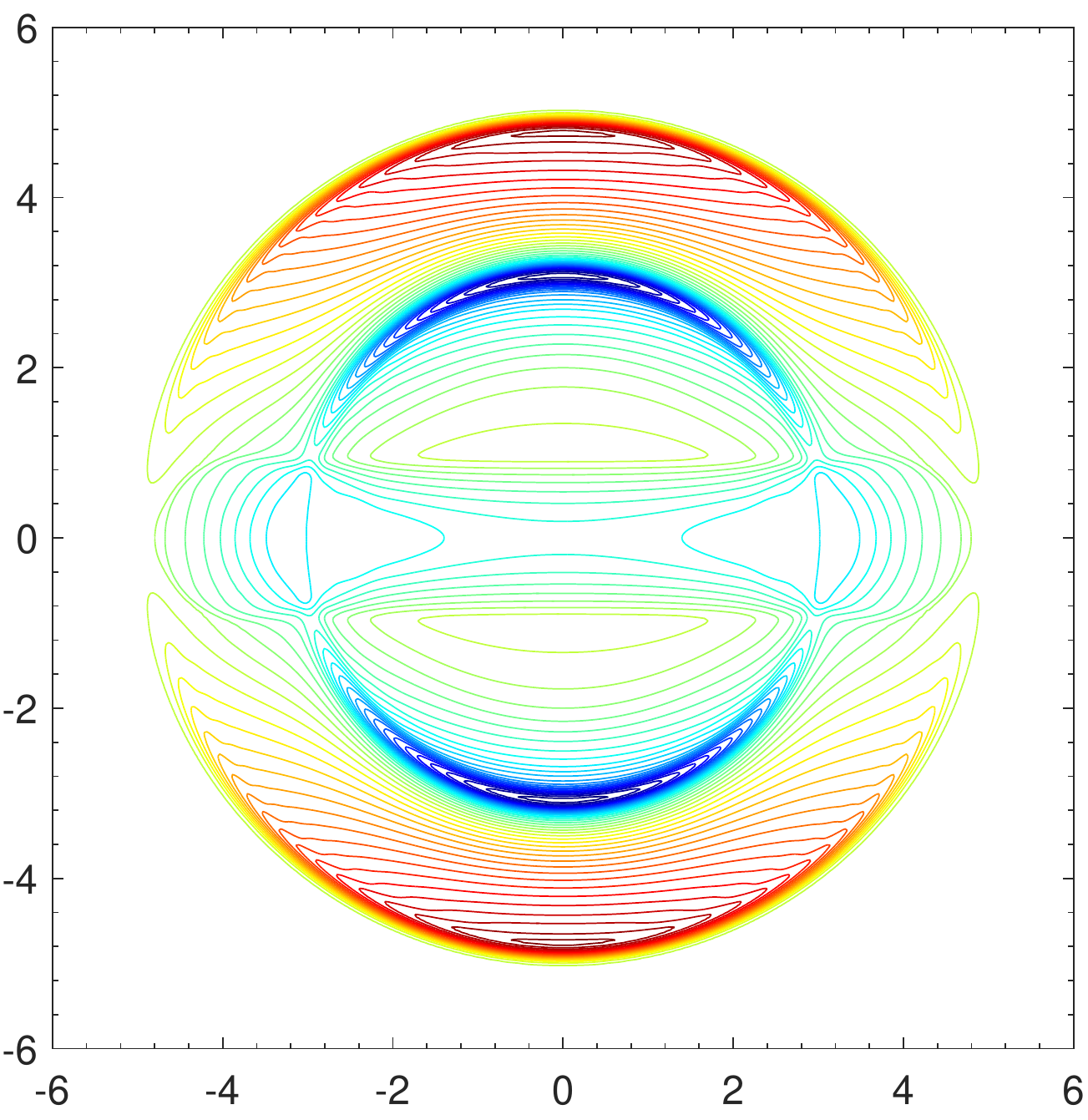}
		\end{center}
	\end{subfigure}
\caption{\small Contour plots of $\log(\rho)$ (left) and $|{\bf B}|$ (right) for the blast problems at $t=4$. Forty equally spaced contour lines are displayed. 
	Top: the moderate magnetized case ($B_a=0.1$, $\beta_a=0.1$); middle: the relatively strongly magnetized case ($B_a=0.5$, $\beta_a=4\times 10^{-3}$); 
bottom: the extremely strongly magnetized case ($B_a=2000$, $\beta_a=2.5\times 10^{-10}$).}
\label{fig:BL}
\end{figure}

In order to examine the robustness and PCP property of our method, we run this test with different $B_a\in \{ 0.1, 0.5, 20, 100, 2000 \}$. These five configurations include 
the two in \cite{MignoneHLLCRMHD,del2007echo,BalsaraKim2016,Zanotti2015} and three much more challenging cases. 
We observe that our PCP methods are able to successfully  
simulate all these test cases without any artificial treatments.  
\figref{fig:BL} shows our numerical results at $t=4$, obtained using our third-order PCP method on $400\times 400$ mesh grids, for three configurations: 
a moderate magnetized case ($B_a=0.1$), a relatively strongly magnetized case ($B_a=0.5$), and 
a extremely strongly magnetized case ($B_a=2000$). 
For the first two cases, our results agree quite well with those reported in \cite{Zanotti2015,BalsaraKim2016,WuTangM3AS}. 
The wave patterns for $B_a=20$ and $B_a=100$ 
are similar to that for $B_a=2000$ and thus omitted here. 
From \figref{fig:BL}, we see that the wave pattern for $B_a=0.1$ is composed by two main waves,
an external fast and a reverse shock waves. The former is almost circular, while the latter is elliptic.
The magnetic field is essentially confined between them, while the inner region is almost devoid of magnetization.
In the case of $B_a=0.5$, the external circular fast shock is clearly visible but very weak.
For $B_a\ge 20$, the external circular fast shock becomes much weaker and is only visible in the magnetic field. 

As far as we know, successful simulations of an extreme RMHD blast test with $B_a=2000$ and so low plasma-beta 
 ($\beta_a=2.5\times 10^{-10}$)  
 have {\em not} been reported in the literature.
We also notice that, if the PCP limiter is turned off in the strongly magnetized tests ($B_a\ge 0.5$), 
nonphysical numerical solutions exceeding the set $\mathbb G_n^k$ will appear 
in the simulations, and the DG code will break down.  
We have also performed the above tests by dropping the discrete symmetrization source term ${\mathcal J}_K^{(2)} ( {\bf U}_h, {\bf u}  )$ in our PCP scheme \eqref{eq:2DDGUh-weak}
 and keeping 
the PCP and WENO limiters turned on. 
The resulting scheme is actually the locally divergence-free DG method with PCP and WENO limiters for the conservative RMHD system \eqref{eq:RMHD}. 
We observed that this scheme, which is generally not PCP in theory \cite{WuTangM3AS}, cannot run the tests with $B_a \in \{100, 2000\}$. 
This demonstrates the importance and necessity of including 
the proper discretization of the symmetrization source term for the PCP property of the DG schemes. 

\subsection{Astrophysical jets}
The last test is to simulate two relativistic jets, where  
the internal energy is exceedingly 
small compared to the kinetic energy so that  
negative pressure could be easily produced in numerical simulation.
Moreover, there may exist strong shock wave, shear flow and interface
instabilities in high-speed jet flows. 
Successful simulation of such jet flows is indeed challenging; cf.~\cite{zhang2010b,WuTang2015,QinShu2016,WuTang2017ApJS,WuShu2018}. 

We consider a pressure-matched highly supersonic RHD jet model from \cite{WuTang2017ApJS} and add a magnetic field so as to simulate the RMHD jet flows. 
Initially, the domain $[-12,12]\times [0,25]$ 
is filled with a static uniform medium with 
an unit rest-mass density. 
A RMHD jet of Mach number $M_b=50$ is 
injected in the $y$-direction through 
the inlet part ($|x| \le 0.5$) 
on the bottom boundary ($y=0$) with a density of $\rho_b=0.1$, 
a pressure equal to the ambient pressure, and a speed of $v_b=0.99c$.  
The corresponding initial 
Lorentz factor $W \approx 7.09$ and the relativistic Mach number $M_r:=M_b W/W_s \approx 354.37$, where $W_s= 1/\sqrt{1-c_s^2}$ is the Lorentz factor associated with the local sound speed $c_s$. 
The exceedingly high Mach number and large Lorentz factor render the simulation of this problem very  challenging. 
The fixed inflow condition is specified on the nozzle $\{y=0, |x|\le 0.5\}$, 
while the other boundary conditions are outflow. 
A magnetic field with a magnitude 
of $B_a$ is initialized along the $y$-direction.  
The presence of magnetic field makes this test more extreme. 
We simulate 
a non-magnetized case with $B_a=0$ and a strongly magnetized case 
with $B_a=\sqrt{2000 p}$ (the corresponding plasma-beta 
$\beta_a=10^{-3}$). 
The computational domain is taken as  
$[0,12]\times [0,30]$ and divided into $240 \times 500$ uniform cells with the reflecting boundary condition on $\{x=0,0\le y \le 25\}$. 

\begin{figure}[htbp]
	\centering
	\begin{subfigure}[b]{0.32\textwidth}
		\begin{center}
			\includegraphics[width=1.0\linewidth]{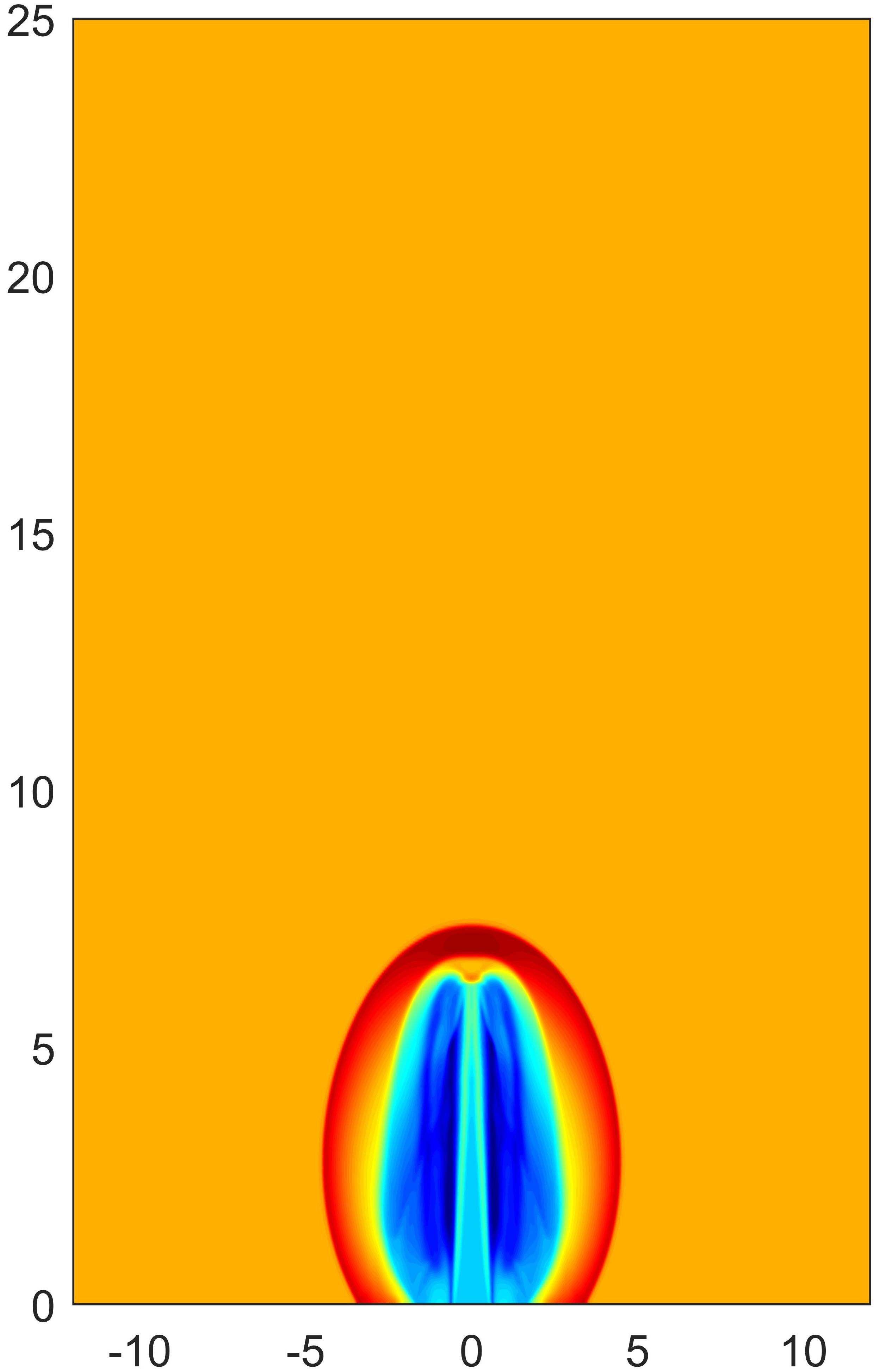}
		\end{center}
	\end{subfigure}
	\begin{subfigure}[b]{0.32\textwidth}
		\begin{center}
			\includegraphics[width=1.0\linewidth]{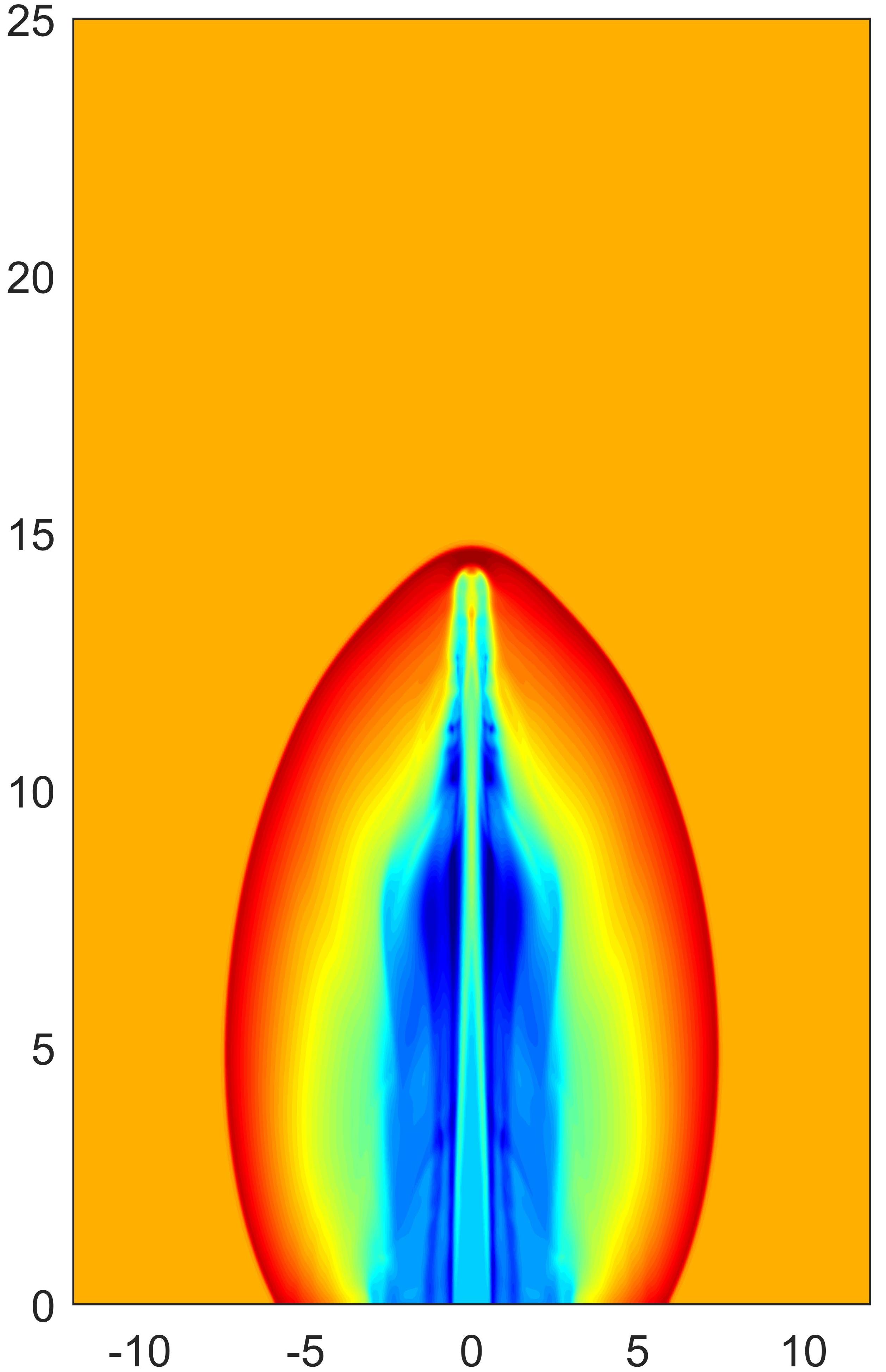}
		\end{center}
	\end{subfigure}
	\begin{subfigure}[b]{0.32\textwidth}
		\begin{center}
			\includegraphics[width=1.0\linewidth]{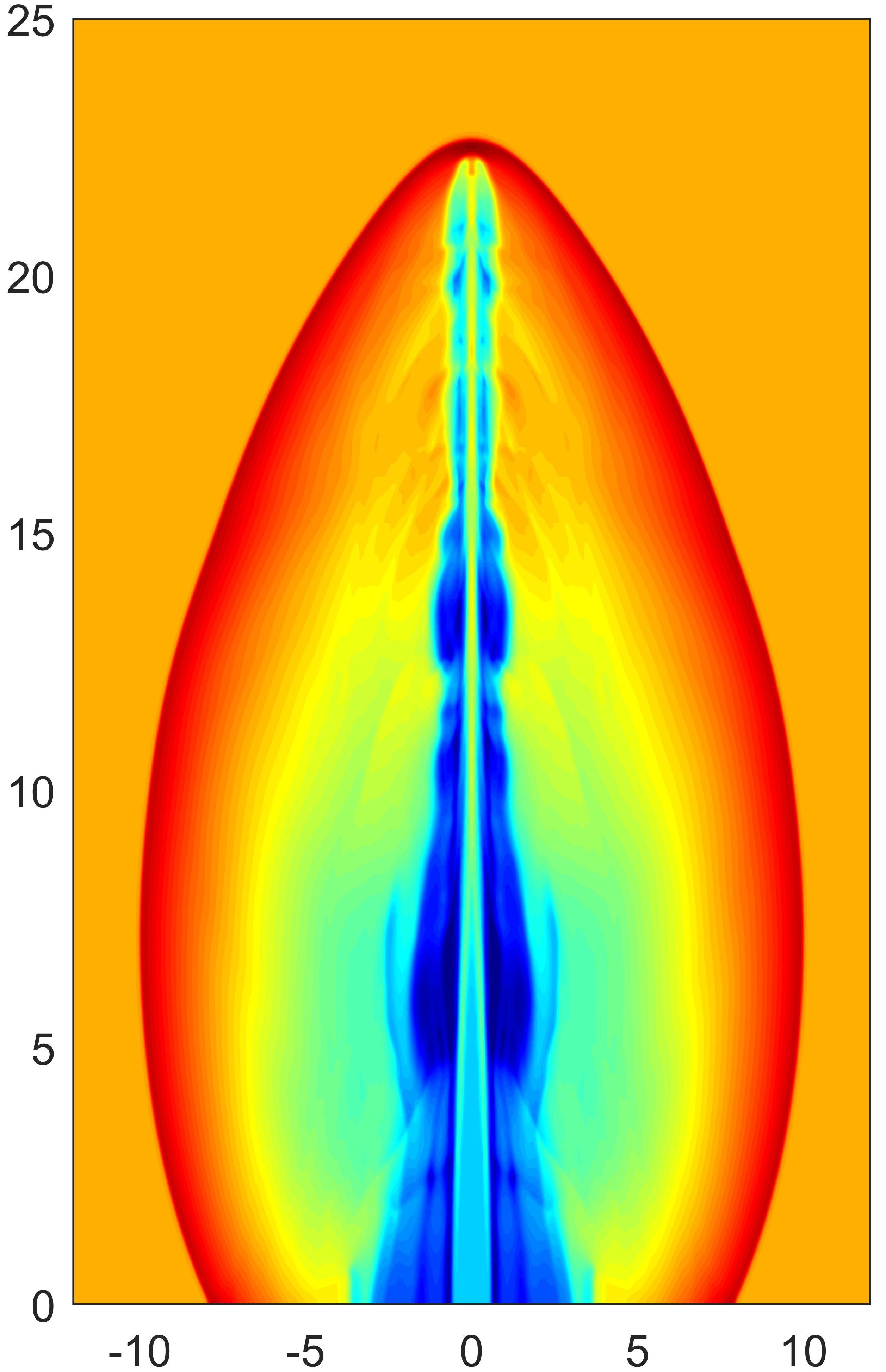}
		\end{center}
	\end{subfigure}
	\begin{subfigure}[b]{0.32\textwidth}
		\begin{center}
			\includegraphics[width=1.0\linewidth]{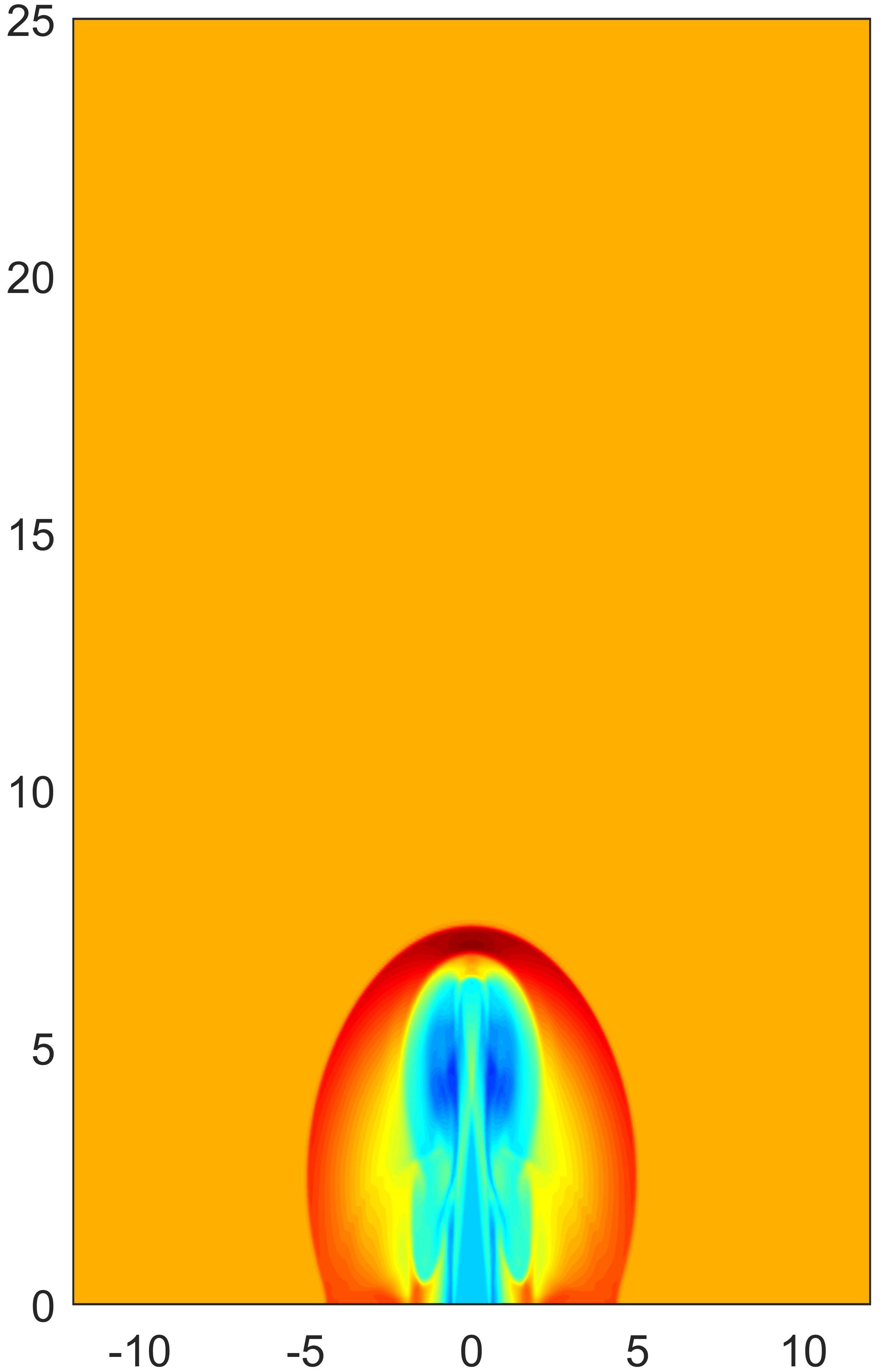}
		\end{center}
	\end{subfigure}
	\begin{subfigure}[b]{0.32\textwidth}
		\begin{center}
			\includegraphics[width=1.0\linewidth]{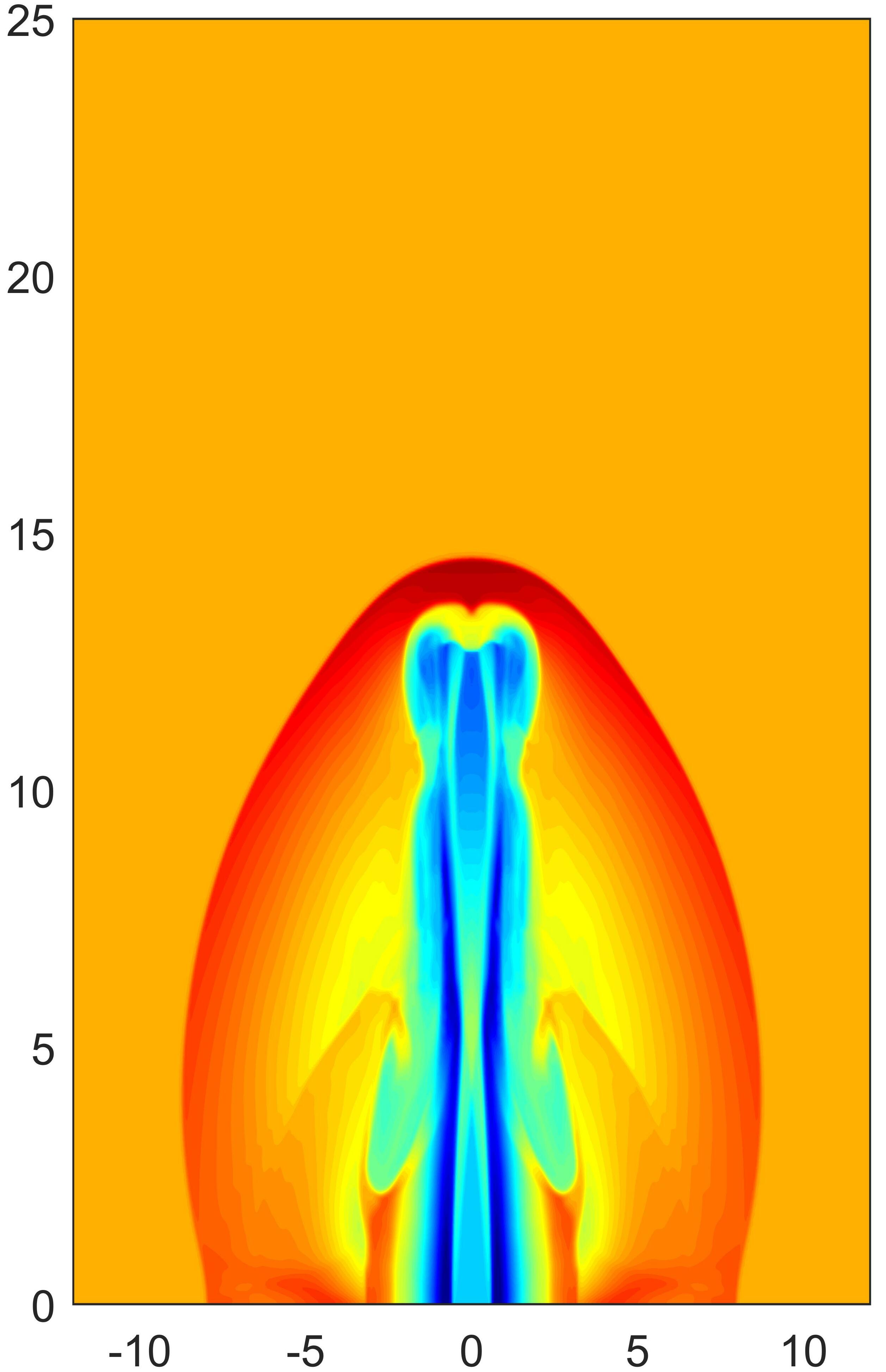}
		\end{center}
	\end{subfigure}
	\begin{subfigure}[b]{0.32\textwidth}
		\begin{center}
			\includegraphics[width=1.0\linewidth]{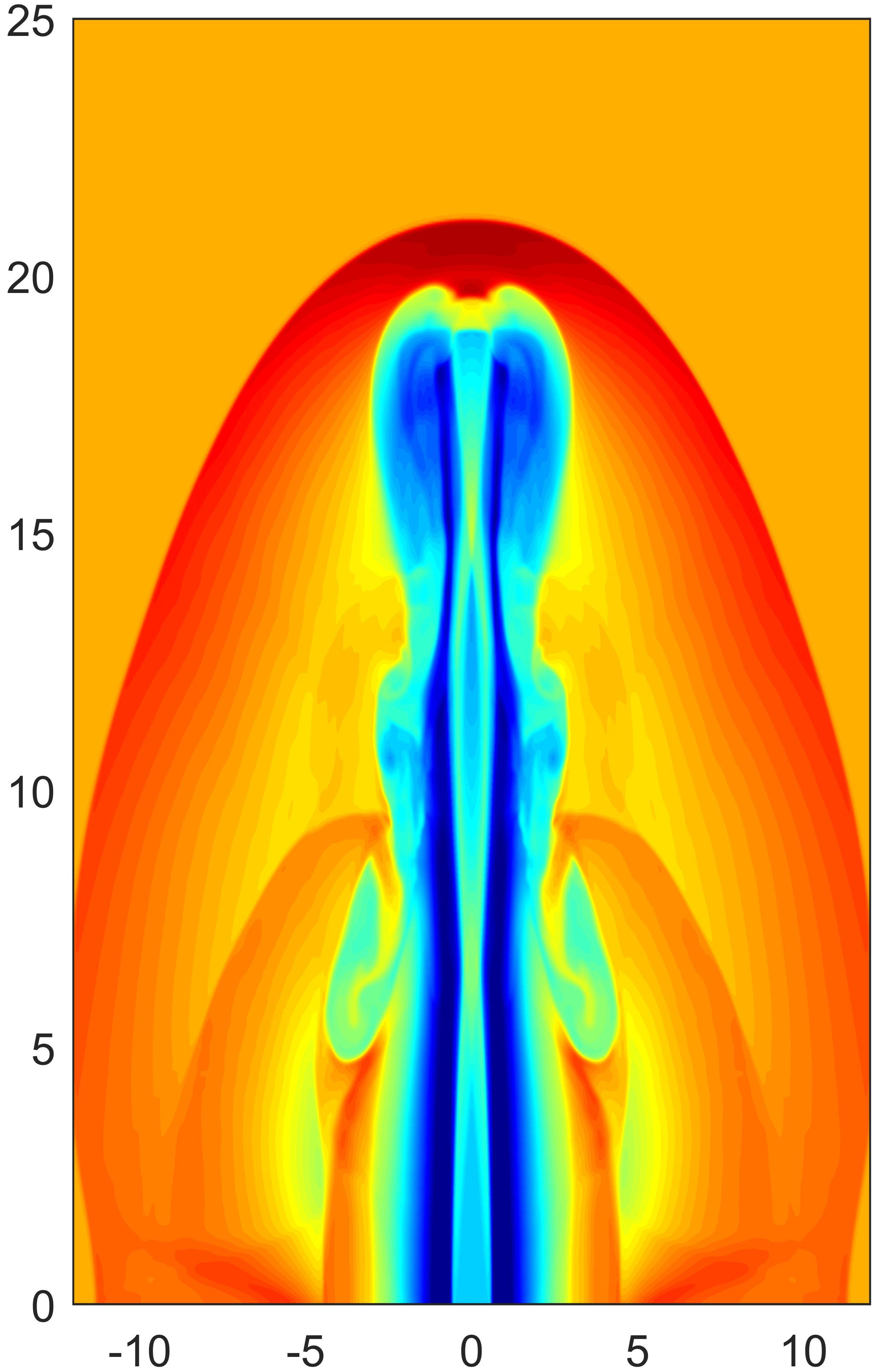}
		\end{center}
	\end{subfigure}
	\captionsetup{belowskip=-11pt}
	\caption{\small Schlieren images of $\log(\rho)$ for the relativistic jets at 
		 $t=10,$ $20$, and $30$ (from left to right). 
		Top: the non-magnetized case; 
		bottom: the strongly magnetized case.}
	\label{fig:Jet}
\end{figure}

\figref{fig:Jet} displays the schlieren images of the rest-mass density logarithm 
within the domain $[-12,12]\times [0,25]$ at $t=10,$ $20$ and $30$, computed by using 
the proposed third-order PCP DG method, for the non-magnetized case 
and the strongly magnetized case, respectively. 
Those plots clearly show the dynamics of the relativistic jets. 
We observe that the Mach shock wave at the jet head and  the beam/cocoon interface
are well captured during the whole simulation. 
The wave patterns for the non-magnetized case are in good agreement with 
those computed in \cite{WuTang2017ApJS}. 
Due to the effect of the strong magnetic field, the flow structures of the strongly magnetized case 
are quite different from those of the non-magnetized case. 
Our PCP method exhibits good robustness in such extreme tests. 
It is observed that if the PCP limiter is turned off, the simulation will break down after 
several time steps due to nonphysical numerical solutions. 
In addition, if dropping the discrete symmetrization source term ${\mathcal J}_K^{(2)} ( {\bf U}_h, {\bf u}  )$ in our PCP scheme \eqref{eq:2DDGUh-weak}, 
we find the cell averages of the DG solutions will 
exceed the set ${\mathcal G}_1$ and the scheme will fail in the strongly magnetized test.    
Again, this demonstrates the importance of including 
the proper discretization of the symmetrization source term for the PCP property.

\section{Conclusions}\label{sec:con}We have proposed a framework of constructing provably PCP high-order DG schemes for the multidimensional RMHD with a general EOS on general meshes. 
The schemes are based on a suitable, locally divergence-free DG discretization of 
symmetrizable RMHD equations, which have accommodated the PCP property at PDE level. 
The resulting DG schemes with strong-stability-preserving time discretizations 
satisfy a weak PCP property, which implies that a simple limiter can enforce the PCP property without losing conservation and high order accuracy. 
Most notably, we rigorously prove the PCP property by using  
a novel ``quasi-linearization'' approach to handle the highly nonlinear physical constraints, technical splitting to offset the influence of divergence error,  
and sophisticated estimates to analyze the beneficial effect of the symmetrization source term. 
Several demanding numerical examples, including strongly magnetized bast problem with extremely low plasma-beta ($2.5 \times 10^{-10}$) 
and highly supersonic RMHD jets, have been tested and demonstrated the effectiveness and  
robustness of the proposed PCP schemes. 
In the context of RMHD, 
our findings furthered the understanding, at both discrete and continuous levels, of the intrinsic connection between the PCP property and divergence-free condition, 
indicating 
the unity of discrete and continuous objects. 


\appendix

\section{Proof of Proposition \ref{thm:PCP_PDElevel}}\label{app:proof}Due to the assumption that the the strong solution of the initial-value problem exists for ${\bf x} \in {\mathbb R}^d$ and $0\le t \le T$, the Lorentz factor $W$ does not blow up, and then   
	$|{\bf v}( {\bf x}, t )|<1$ for all ${\bf x} \in {\mathbb R}^d$ and $0\le t \le T$. 
	Let $\frac{\D}{\D t}:= \frac{\partial}{\partial t} +  {\bf v}({\bf x},t) \nabla \cdot $ be the directional derivative along the direction
	\begin{equation}\label{eq:curve}
	\frac{d {\bf x}}{d t} =  {\bf v} ({\bf x},t).  
	\end{equation}
	For any $\left( \bar {\bf x}, \bar t \right) \in \mathbb R^d \times \mathbb R_+$, let ${\bf x}={\bf x}(t;\bar{\bf x},\bar t)$ be the integral curve of \eqref{eq:curve} through the point $\left( \bar {\bf x}, \bar t \right)$. 
	Denote ${\bf x}_0( \bar{\bf x},\bar t ):=  {\bf x}(0;\bar{\bf x},\bar t)$, then,  
	at $t=0$, the curve passes through the point $\left( {\bf x}_0( \bar{\bf x},\bar t ),0 \right)$. 
	Recall that, for smooth solutions, 
	the first equation of the system \eqref{ModRMHD} can be reformulated as 
	$
	\frac{ \D (\rho W) }{\D t} = - \rho W \nabla \cdot {\bf v}. 
	$ 
	Integrating this equation along the curve ${\bf x}={\bf x}(t;\bar{\bf x},\bar t)$ gives 
	$$
	\rho W ( \bar {\bf x}, \bar t  ) = \rho_0 W_0 ( {\bf x}_0( \bar{\bf x},\bar t ) ) \exp \left( - \int_{0}^{\bar t} \nabla \cdot {\bf v}( {\bf x}(t;\bar{\bf x},\bar t), t ) dt \right) > 0, 
	$$
	which, along with $W ( \bar {\bf x}, \bar t ) \ge 1$, imply $\rho ( \bar {\bf x}, \bar t  )>0$ for all $\left( \bar {\bf x}, \bar t \right) \in \mathbb R^3 \times \mathbb R_+$.  
	For smooth solutions of the modified RMHD system \eqref{ModRMHD}, one can derive that 
	$
	\frac{ \D \left( p \rho^{-\Gamma} \right)}{\D t} = 0, 
	$ 
	which implies 
	$
	p \rho^{-\Gamma} ( \bar {\bf x}, \bar t  ) = p_0 \rho^{-\Gamma}_0 ( {\bf x}_0( \bar{\bf x},\bar t ) )  > 0. 
	$
	It follows that $p  ( \bar {\bf x}, \bar t  ) >0 $ for any $\left( \bar {\bf x}, \bar t \right) \in \mathbb R^3 \times \mathbb R_+$. 
	It is shown in \cite{WuShu2019SISC} that, for smooth solutions of \eqref{ModRMHD}, the quantity  $\frac{\nabla \cdot {\bf B}}{\rho W}$ is constant along the curve ${\bf x}={\bf x}(t;\bar{\bf x},\bar t)$, which implies \eqref{maxminDivB}. The proof is complete.

\bibliographystyle{siamplain}
\bibliography{references}

\end{document}